\newtheorem{theorem}{Theorem}[section]
\newtheorem{corollary}[theorem]{Corollary}
\newtheorem{proposition}[theorem]{Proposition}
\newtheorem{lemma}[theorem]{Lemma}
\theoremstyle{definition}
\newtheorem{problem}[theorem]{Problem}
\newtheorem{conjecture}[theorem]{Conjecture}
\DeclareMathOperator{\Ad}{{\rm Ad}}
\newcommand{\dic}{\vec{\chi}}
\newcommand{\bid}{\overleftrightarrow}
\newcommand{\eq}{=}
\renewcommand{\leq}{\leqslant}
\renewcommand{\geq}{\geqslant}
\title{On the dichromatic number of surfaces}
\author[1]{Pierre Aboulker}
\author[2]{Fr\'ed\'eric Havet}
\author[3]{Kolja Knauer}
\author[1]{Cl\'ement Rambaud}
\affil[1]{DIENS, \'Ecole normale sup\'erieure, CNRS, PSL University, Paris, France}
\affil[2]{CNRS, Universit\'e C\^ote d'Azur, I3S, INRIA, Sophia Antipolis, France}
\affil[3]{Aix Marseille Univ, Universit\'e de Toulon, CNRS, LIS, Marseille, France\\ Departament de Matem\`atiques i Inform\`atica,
Universitat de Barcelona, Spain}
\date{}
\begin{document}

\maketitle

\begin{abstract}
In this paper, we give bounds on the dichromatic number  $\vec{\chi}(\Sigma)$ of a surface $\Sigma$, which is the maximum dichromatic number of an oriented graph embeddable on $\Sigma$.
We determine the asymptotic behaviour of $\vec{\chi}(\Sigma)$ by showing that there exist constants $a_1$ and $a_2$ such that, 
 $ a_1\frac{\sqrt{-c}}{\log(-c)}   \leq \vec{\chi}(\Sigma) \leq  a_2  \frac{\sqrt{-c}}{\log(-c)} $ for every surface $\Sigma$ with Euler characteristic $c\leq -2$. 
We then give more explicit bounds for some surfaces with high Euler characteristic. In particular, we show that the dichromatic numbers of the projective plane $\mathbb{N}_1$, the Klein bottle $\mathbb{N}_2$, the torus $\mathbb{S}_1$, and Dyck's surface $\mathbb{N}_3$ are all equal to $3$, and that
the dichromatic numbers of the $5$-torus $\mathbb{S}_5$ and the $10$-cross surface $\mathbb{N}_{10}$ are equal to $4$. 
We also consider the complexity of deciding whether a given digraph or oriented graph embeddable on a fixed surface is $k$-dicolourable.
In particular, we show that for any fixed surface, deciding whether a digraph embeddable on this surface is $2$-dicolourable is NP-complete, and that deciding whether a planar oriented graph is $2$-dicolourable is NP-complete unless all planar oriented graphs are $2$-dicolourable (which was conjectured by Neumann-Lara).
\end{abstract}

\section{Introduction}

All surfaces considered in this paper are closed. 
 
A graph is {\bf embeddable} on a surface $\Sigma$ if its vertices can be
mapped onto distinct points of $\Sigma$ and its edges onto simple curves of
$\Sigma$ joining the points onto which its endvertices are mapped, so that two
edge curves do not intersect except in their common extremity.  A {\bf face} of an
embedding $\tilde{G}$ of a graph $G$ is a component of $\Sigma \setminus
\tilde{G}$.
Recall that an important theorem of the topology of surfaces, known as the
Classification Theorem for Surfaces, states that every surface is
homeomorphic to either the $k$-torus -- a sphere with $k$-handles $\mathbb{S}_k$ or the $k$-cross surface -- a sphere
with $k$-cross-caps $\mathbb{N}_k$. The surface $\mathbb{S}_0=\mathbb{N}_0$ is the sphere, and the surfaces $\mathbb{S}_1$,
$\mathbb{S}_2$, $\mathbb{N}_1$, $\mathbb{N}_2$ , $\mathbb{N}_3$ are also called the
\emph{torus}, the \emph{double torus}, the \emph{projective plane}, the
\emph{Klein bottle}, and \emph{Dyck's surface}, respectively.
The {\bf Euler characteristic} of a surface homeomorphic to $\mathbb{S}_k$ is $2 -2k$ and of a surface homeomorphic to $\mathbb{N}_k$ it is $2 -k$.
We denote the Euler characteristic of a surface $\Sigma$ by $c(\Sigma)$.

Let $G$ be a graph. We denote by $n(G)$ its number of vertices, and by $m(G)$ its number of edges. If $G$ is
embedded in a surface $\Sigma$, then we denote by $f(G)$ the number of faces of the embedding.
Euler's Formula relates the numbers of vertices, edges and faces of a (connected) graph embedded in a surface.

\begin{theorem}\label{thm:euler-form}~~~{\sc Euler's Formula}\\
  Let $G$ be a connected graph embedded on a surface $\Sigma$. Then
  \[
    n(G) - m(G) + f(G) \geq c(\Sigma).
  \]
% \CR{En général c'est $n - m + f \geq c$ (egalite pour les plongement cellulaires), il me semble que cela suffit pour la suite.}
\end{theorem}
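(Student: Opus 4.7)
The plan is to first strengthen the statement to equality when the embedding is 2-cell (every face is homeomorphic to an open disk), and then deduce the general inequality by a reduction argument. In the 2-cell case, the embedding endows $\Sigma$ with a CW-complex structure with $n(G)$ vertices, $m(G)$ edges, and $f(G)$ $2$-cells, so $n(G) - m(G) + f(G)$ equals the Euler characteristic of $\Sigma$ by topological invariance. If one prefers a combinatorial argument, one can induct on $m(G)$ using edge contraction: contracting a non-loop edge preserves the 2-cell property, decreases both $n$ and $m$ by one, and leaves $f$ unchanged, so it preserves $n - m + f$; iterating reduces $G$ to a bouquet of $k$ loops at a single vertex realising one of the standard polygonal representations of $\Sigma$, for which $n - m + f = c(\Sigma)$ is checked directly from the Classification Theorem for Surfaces.

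To deduce the general inequality, given a (not necessarily 2-cell) embedding of $G$ on $\Sigma$, I would enlarge $G$ by iteratively adding edges inside non-disk faces, possibly introducing new vertices, until every face becomes a disk. Each such modification falls into one of two types: (i) adding an edge between two existing vertices, which increases $m$ by one and $f$ by at most one, or (ii) adding a new vertex together with an incident edge, which increases both $n$ and $m$ by one and leaves $f$ unchanged. In either case, $n - m + f$ can only decrease or stay the same. Hence, if $G'$ denotes the resulting supergraph endowed with a 2-cell embedding on $\Sigma$, applying the 2-cell case yields $n(G) - m(G) + f(G) \geq n(G') - m(G') + f(G') = c(\Sigma)$.

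The main obstacle is the reduction step: showing that any embedding of $G$ can be refined into a 2-cell embedding of a supergraph of $G$ without leaving the surface $\Sigma$. Topologically, this rests on the fact that any non-disk face of an embedded graph admits a properly embedded arc that either decomposes it into simpler regions or exposes a handle or cross-cap of $\Sigma$ captured by that face; since $\Sigma$ has finite topological complexity, iterating such cuts terminates with a 2-cell embedding. Rigorously tracking $n - m + f$ throughout this procedure, and in particular verifying that the quantity never increases along any individual modification, is the delicate technical point of the argument.
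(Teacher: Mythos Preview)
The paper does not prove Theorem~\ref{thm:euler-form}; it is stated as a classical background result and used without proof. Your proposal is therefore not comparable to anything in the paper, but it is a correct and standard argument: equality in the 2-cell case via the CW-structure (or the edge-contraction induction you sketch), followed by the monotonicity of $n-m+f$ under refining an arbitrary embedding into a 2-cell one. The analysis of the two elementary refinement moves is accurate, and the termination argument based on the finite genus of $\Sigma$ is the usual one (as in Mohar--Thomassen, \emph{Graphs on Surfaces}). There is no gap that would need fixing; the only remark is that the ``main obstacle'' you flag is not really an obstacle---the existence of a non-separating or essential arc in any non-disk face is a direct consequence of the classification of surfaces with boundary, and the bookkeeping you describe already handles the monotonicity cleanly.
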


We denote by $\Ad(G) = 2m/n$ the average degree of a graph $G$. 
Euler's formula implies that graphs on surfaces have bounded average degree. 
\begin{theorem}\label{thm:euler-gen}
  A graph $G$ embeddable on a surface $\Sigma$ satisfies: 
  $$m(G) \leq 3n(G) - 3 c(\Sigma)  \mbox{ ~~~~~and~~~~~} \Ad(G) \leq 6 - \frac{6c(\Sigma)}{n(G)}.$$
  Moreover, there is equality if and only if $G$ is a triangulation.
\end{theorem}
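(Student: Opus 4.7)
The plan is to derive both inequalities from Euler's formula (Theorem~\ref{thm:euler-form}) combined with a double-counting argument relating edges to faces, which is the standard approach. I would first reduce to the case that $G$ is connected and simple with $n(G) \geq 3$: isolated vertices or graphs with $n(G) \leq 2$ satisfy the bound trivially, and for disconnected graphs we may add edges while preserving embeddability, which only makes the inequality tighter and allows us to assume connectedness (so that Theorem~\ref{thm:euler-form} applies).

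Fix an embedding of $G$ on $\Sigma$. The key observation is that each face of the embedding is bounded by a closed walk of length at least $3$ (using that $G$ is simple with at least $3$ vertices), and that each edge contributes exactly $2$ to the sum of the lengths of all face-boundary walks. This double-counting yields $3f(G) \leq 2m(G)$, i.e., $f(G) \leq \tfrac{2}{3}m(G)$. Substituting into Euler's formula $n(G) - m(G) + f(G) \geq c(\Sigma)$ gives $n(G) - \tfrac{1}{3}m(G) \geq c(\Sigma)$, from which $m(G) \leq 3n(G) - 3c(\Sigma)$ follows immediately. Dividing by $n(G)/2$ gives the bound on $\Ad(G) = 2m(G)/n(G)$.

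For the equality case, I would argue that equality in the edge bound forces equality in both intermediate inequalities: $3f(G) = 2m(G)$ and $n(G) - m(G) + f(G) = c(\Sigma)$. The first equality means every face-boundary walk has length exactly $3$, so the embedding is a triangulation. Conversely, if $G$ is a triangulation, every face is a triangle (so $3f(G)=2m(G)$) and the embedding is necessarily $2$-cell, so Euler's formula holds with equality, and combining the two gives $m(G) = 3n(G) - 3c(\Sigma)$.

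The only subtle point is the justification that every face boundary has length at least $3$, which requires the reduction to simple graphs on $\geq 3$ vertices; this is the only step requiring any real care, and once it is in place the rest is immediate from Euler's formula.
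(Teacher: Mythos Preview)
The paper does not supply a proof of Theorem~\ref{thm:euler-gen}; it is stated there as a standard consequence of Euler's Formula (Theorem~\ref{thm:euler-form}). Your argument---double count edge--face incidences to get $3f(G)\le 2m(G)$, then substitute into Euler's inequality---is exactly the classical derivation, and the equality analysis via ``every face is a triangle and the embedding is $2$-cell'' is the right one. So there is nothing to compare: you have provided the folklore proof that the paper omits.

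One small correction to your reduction step: the claim that ``graphs with $n(G)\le 2$ satisfy the bound trivially'' is not quite right on the sphere, where $c(\Sigma)=2$ and $3n(G)-3c(\Sigma)$ is negative for $n(G)\le 2$. This is really a defect of the theorem statement (which tacitly assumes $n(G)\ge 3$, as every application in the paper does) rather than of your argument, but you should phrase the base case accordingly.
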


A {\bf $k$-colouring} of a graph $G$ is a partition of the vertex set of $G$ into $k$ disjoint {\bf stable sets} (i.e. sets of pairwise non-adjacent vertices).  
A graph is {\bf $k$-colourable} if it has a $k$-colouring.  
The  {\bf chromatic number} of a graph $G$, denoted by $\chi(G)$, is the least integer $k$ such that $G$ is $k$-colourable, and the {\bf chromatic number} of a surface $\Sigma$, denoted by $\chi(\Sigma)$, is the least integer $k$ such that every graph embeddable on $\Sigma$ is $k$-colourable. 
Determining the chromatic number of surfaces attracted lots of attention, with its most important instance being the Four Colour Conjecture on planar graphs (i.e. graphs embeddable on $\mathbb{S}_0$). This conjecture was eventually proved by Appel and Haken~\cite{ApHa1977} using computer assistance and another proof requiring less  computer assistance was given by Robertson et al.~\cite{RSST96}. 
Maybe surprisingly, the chromatic numbers of surfaces other than the plane were established  before the Four Colour Theorem.  In 1890, Heawood~\cite{He1890} proved the following theorem as a consequence of Euler's Formula.

\begin{theorem}[Heawood~\cite{He1890}]\label{thm_heawood}
If $\Sigma$ is a surface with Euler characteristic $c \leq 0$, then $\chi(\Sigma) \leq H(c) = \left\lfloor \frac{7 + \sqrt{49-24c}}{2} \right\rfloor$.
\end{theorem}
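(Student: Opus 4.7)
The natural approach is induction on $n(G)$ combined with a greedy colouring argument. Write $H = H(c)$. If $n(G) \leq H$, we can give each vertex its own colour and $G$ is trivially $H$-colourable. For the inductive step $n(G) \geq H+1$, the plan is to find a vertex $v$ of degree at most $H-1$, apply the inductive hypothesis to $G - v$ (still embeddable on $\Sigma$) to obtain an $H$-colouring, and extend it to $v$ by choosing a colour not used on its at most $H-1$ neighbours.

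The problem therefore reduces to producing such a low-degree vertex. By Theorem~\ref{thm:euler-gen}, the minimum degree $\delta(G)$ satisfies $\delta(G) \leq \Ad(G) \leq 6 - 6c/n(G)$, and since $c \leq 0$ and $n(G) \geq H+1$ we have $6 - 6c/n(G) \leq 6 - 6c/(H+1)$. Because $\delta(G)$ is an integer, it is enough to prove the strict inequality $6 - 6c/(H+1) < H$, or equivalently $-6c < H^2 - 5H - 6$.

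This last inequality is the computational heart of the argument, and it should follow directly from the definition of $H$. Since $H$ is the floor of $(7 + \sqrt{49-24c})/2$, we have $H+1 > (7 + \sqrt{49-24c})/2$, that is $\sqrt{49 - 24c} < 2H - 5$. For $c \leq 0$ one has $H \geq H(0) = 7$, so both sides are positive and may be squared, yielding $49 - 24c < (2H-5)^2 = 4H^2 - 20H + 25$, which simplifies to exactly $-6c < H^2 - 5H - 6$. The only delicate point I anticipate is preserving the strict inequality from the floor through the squaring step, together with checking the sign condition $2H-5 > 0$ so that squaring is legitimate; once these are in place the rest is a routine verification.
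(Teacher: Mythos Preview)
Your argument is correct and is exactly the classical degeneracy proof of Heawood's bound: show that any graph on $\Sigma$ with at least $H(c)+1$ vertices has a vertex of degree at most $H(c)-1$, then colour greedily by induction. The arithmetic you carry out---passing from $H+1 > (7+\sqrt{49-24c})/2$ to the strict inequality $-6c < H^2 - 5H - 6$ via squaring (legitimate since $H\geq 7$ forces $2H-5>0$)---is the standard verification and is sound.

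There is nothing to compare against in the paper itself: Theorem~\ref{thm_heawood} is only stated and attributed to Heawood~\cite{He1890}, with no proof given. Your write-up reproduces the textbook argument that the paper is implicitly relying on.
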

 
 Franklin~\cite{Fra34} showed that the above inequality is not tight as the Klein
bottle has chromatic number $6$ (the above inequality yields $7$ as
the Klein bottle has Euler characteristic $0$). 
Contrary to the sphere, on other surfaces the most effort of determining the chromatic number went into proving the lower bounds. Indeed, Ringel and Youngs~\cite{Ri1968} proved that the Klein bottle is the sole surface that does not admit an embedding of a complete graph witnessing the Heawood bound.

 \begin{theorem}[Ringel and Youngs~\cite{Ri1968}]\label{thm:plongement-complet} 
 Let $\Sigma$ be a surface different from the Klein bottle $\mathbb{N}_2$ and let $c$ be its Euler characteristic.
 Then the complete graph of order $H(c)$ is embeddable on $\Sigma$.
\end{theorem}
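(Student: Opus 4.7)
The plan is to first establish that $H(c)$ is an upper bound on the order of any complete graph embeddable on $\Sigma$, and then to construct, for each $c$, an actual embedding of $K_{H(c)}$ on the corresponding surface (with the unique exception of the Klein bottle). The upper bound is a short counting argument: if $K_n$ embeds on $\Sigma$ with Euler characteristic $c$, then by Theorem~\ref{thm:euler-gen} we have $\binom{n}{2} = m(K_n) \leq 3n(K_n) - 3c$, i.e.\ $n^2 - 7n + 6c \leq 0$, which rearranges to $n \leq (7+\sqrt{49-24c})/2$, so $n \leq H(c)$. In particular, to achieve the Heawood bound, the embedding must be a triangulation, and plugging $n = H(c)$ into Euler's formula one sees that $H(c)(H(c)-7) = -6c$ forces very specific divisibility conditions, so it is natural to split the proof according to the residue of $H(c)$ modulo $12$.

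For the lower bound, i.e.\ the construction, I would follow the Ringel--Youngs strategy and handle the twelve residue classes $H(c) \bmod 12$ separately. The main construction tool is that of a \emph{current graph}: a cubic graph (or more generally a graph with prescribed rotation system) whose darts are labelled by elements of an abelian group $\Gamma$ of order $H(c)$ and satisfy Kirchhoff's current law at each vertex; such a gadget, by the standard covering-space / voltage-graph correspondence, produces a triangular embedding of the Cayley graph of $\Gamma$ with generating set equal to $\Gamma \setminus \{0\}$, which is exactly $K_{H(c)}$. For the orientable surfaces $\mathbb{S}_k$ one uses an orientation-preserving covering; for the non-orientable surfaces $\mathbb{N}_k$ one allows sign changes on the currents so that some closed walks reverse orientation, yielding an embedding in a non-orientable surface of prescribed genus. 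The task in each residue class is then to exhibit one explicit current graph whose parameters match.

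The hard part — and this is where the bulk of the Ringel--Youngs work lies — is producing these explicit current graphs in every one of the twelve residue classes, together with the small-value exceptions at the bottom of each class that the asymptotic current-graph construction does not cover. Several residue classes (notably $7 \bmod 12$ in the orientable case, and the non-orientable analogues near the Klein bottle) require genuinely ad hoc constructions; the exceptional behaviour of $\mathbb{N}_2$, where one checks directly that $K_7$ does not embed (for instance by a short case analysis on the rotation system of any purported triangular embedding), forces this surface to be excluded from the statement. I would not attempt to reproduce all twelve constructions; rather, I would cite the original series of papers of Ringel, Youngs, Gustin, Mayer, Terry--Welch--Youngs and Jungerman for the individual cases, and present in detail only one representative residue class to illustrate the current-graph machinery.
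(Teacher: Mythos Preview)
The paper does not prove this theorem at all: it is quoted as a classical result of Ringel and Youngs and used as a black box (in Theorem~\ref{thm:N1}, Proposition~\ref{prop:carac-8}, and the lower bound of Theorem~\ref{thm:bnd-gen}). So there is nothing in the paper to compare your proposal against.

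That said, your outline is a faithful high-level summary of the actual Ringel--Youngs programme: the Heawood upper bound via Euler's formula, the reduction to constructing triangular embeddings of $K_{H(c)}$, the current-graph / voltage-graph machinery, the split into twelve residue classes modulo $12$, and the ad hoc handling of small cases together with the exclusion of $\mathbb{N}_2$. Your proposal is honest about the fact that the real content lies in the explicit constructions for each residue class, which you would cite rather than reproduce. For the purposes of this paper, however, none of this is needed or expected; a citation is the appropriate treatment.
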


The {\bf girth} of a graph is the length of a shortest cycle in it (or $+\infty$ if it is acyclic).
The chromatic number of graphs embeddable on a surface and of girth at least $g$ has been studied.
In the same way as Theorem~\ref{thm:euler-gen}, one can derive from Euler's Formula that the average degree of 
a graph $G$ embeddable on a surface $\Sigma$ and of girth at least $g$ is at most
$\frac{2g}{g-2} - \frac{2gc(\Sigma)}{n(G)}$. 
This implies that the maximum chromatic number over all graphs of girth at least $g$ embeddable on $\Sigma$ tends to $3$ when $g$ tends to $+\infty$.
A particular interest has been devoted to {\bf triangle-free} graphs, i.e., graphs of girth at least $4$.
The above bound on the average degree implies that triangle-free planar graphs have average degree at most $3$, and so are 
$4$-colourable.  The celebrated Gr\"otzsch's Theorem~\cite{Gro59} asserts
that such graphs are even $3$-colourable. A short proof can be found in \cite{Tho94}.
\begin{theorem}[Gr\"otzsch~\cite{Gro59}]\label{grotzsch}
Every triangle-free planar graph is $3$-colourable.
\end{theorem}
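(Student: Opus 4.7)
My plan is to proceed by induction on $n(G)$ after strengthening the statement to one that inductively reduces. The bald statement ``every triangle-free planar graph is $3$-colourable'' does not lend itself directly to induction, because deleting a vertex or identifying two vertices can destroy triangle-freeness or some other piece of structure. Following the approach of \cite{Tho94}, I would prove by induction a claim of the following shape: \emph{let $G$ be a plane graph of girth at least $4$ with outer face bounded by a cycle $C$ of length at most $5$; then every proper $3$-colouring of $C$ extends to a proper $3$-colouring of the whole of $G$, provided that $G$ avoids a short list of explicit small obstructions near $C$.} Theorem~\ref{grotzsch} then follows from this claim by embedding a given triangle-free planar graph $G$ in the plane, picking any face $F$, pre-colouring $\partial F$ (possibly after adding a few auxiliary vertices so that $\partial F$ becomes a short cycle), and extending via the strengthened claim.

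For the induction step, let $G$ be a minimum counterexample to the strengthened claim. Since $G$ has girth at least $4$, Theorem~\ref{thm:euler-gen} applied with $g=4$ yields $m(G) \leq 2n(G) - 4$, so the average degree of $G$ is strictly less than $4$. After discounting the contribution of the few vertices of $C$, this still forces an interior vertex $v$ of degree at most $3$. The argument then proceeds by a short case analysis on $v$. When $\deg(v) \leq 2$, I delete $v$, apply induction to the smaller graph, and extend the colouring using a free colour at $v$. When $\deg(v)=3$ with neighbours $a,b,c$, I try to identify two of them, say $a$ and $b$, in such a way that the resulting plane graph still has girth at least $4$; any $3$-colouring of the quotient then lifts to $G$ because the colour of the merged vertex together with that of $c$ leaves at least one free colour for $v$.

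The main difficulty is the case where no two neighbours of $v$ can safely be identified, that is, where each pair among $\{a,b,c\}$ has a common neighbour other than $v$ and hence lies on a $4$-cycle through $v$. Such rigid local configurations are the classical sticking point in short proofs of Grötzsch's theorem, and are precisely what the technical conditions in the strengthened hypothesis are engineered to forbid; when they do appear, they must be handled by an \emph{ad hoc} reduction that cuts $G$ along an offending $4$-cycle into two smaller plane graphs of girth at least $4$, to each of which the inductive hypothesis applies after appropriately propagating the boundary colouring. Calibrating the strengthened statement so that it is simultaneously strong enough to be usefully inductive and weak enough to cover every triangle-free planar graph is the delicate engineering step; once the right formulation is in place, the case analysis itself is essentially mechanical.
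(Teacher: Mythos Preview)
The paper does not prove Theorem~\ref{grotzsch}; it merely states the result, attributes it to Gr\"otzsch~\cite{Gro59}, and points the reader to Thomassen's short proof~\cite{Tho94}. There is therefore nothing to compare your attempt against.

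That said, your sketch is in the spirit of~\cite{Tho94}, which is the reference the paper cites, so in that sense you are aligned with what the paper expects a reader to consult. A couple of remarks on the sketch itself. First, your appeal to ``Theorem~\ref{thm:euler-gen} applied with $g=4$'' is off: Theorem~\ref{thm:euler-gen} as stated gives only the triangulation bound $m\le 3n-3c$ and says nothing about girth. The inequality $m\le 2n-4$ for triangle-free planar graphs is of course true, but it comes from the girth refinement of Euler's formula discussed informally in the paragraph following Theorem~\ref{thm_heawood}, not from Theorem~\ref{thm:euler-gen}. Second, Thomassen's actual strengthened statement is rather more specific than the placeholder you give (it involves precolouring a facial cycle of length $4$ or $5$, together with precise conditions ruling out separating $4$-cycles and certain configurations near the outer face); your version is too vague to be verifiable as stated, and the phrase ``avoids a short list of explicit small obstructions near $C$'' hides exactly the content that makes the induction go through. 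What you have written is a correct high-level roadmap, but not yet a proof.
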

Kronk and White~\cite{KrWh72} proved that every triangle-free graph embeddable on the torus is $4$-colourable, and  Kronk~\cite{Kro72} studied the chromatic number of triangle-free graphs on certain surfaces. 
Asymptotic bounds on the maximum chromatic number of triangle-free graphs embeddable on a given surface have been given by Gimbel and Thomassen~\cite{GiTh97}. Here we only give the results for orientable surfaces. %\PA{The results do not hold for non-orientable surfaces?!}
\begin{theorem}[Gimbel and Thomassen~\cite{GiTh97}]
There exist positive constants $c_1$ and $c_2$ such that the following hold:
\begin{itemize}
\item[(i)] Every triangle-free graph embeddable on $\mathbb{S}_k$ has chromatic number at most $c_1 \sqrt[3\,]{k/\log k}$.
\item[(ii)] for each $k$, there exists a triangle-free graph which is embeddable on $\mathbb{S}_k$ and with chromatic number at least  $c_2 \sqrt[3\,]{k}/\log k$.
\end{itemize}
\end{theorem}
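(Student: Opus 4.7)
The plan is to handle the two directions separately. For the upper bound (i), I would combine a minimum-degree reduction based on the girth-$4$ version of Euler's formula (mentioned in the paragraph preceding the statement) with the classical consequence of the Ajtai-Komlós-Szemerédi theorem that any triangle-free graph on $N$ vertices has chromatic number at most $O(\sqrt{N/\log N})$. Set $t = c_1 (k/\log k)^{1/3}$ for a constant $c_1$ to be fixed later, and let $G$ be triangle-free with an embedding on $\mathbb{S}_k$. Iteratively remove vertices of degree less than $t$: each can be given at the end a colour missing from its neighbourhood, so it is enough to $t$-colour the residual subgraph $G'$, which has minimum degree at least $t$ and is still triangle-free on $\mathbb{S}_k$. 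The girth-$4$ Euler inequality then yields $m(G') \leq 2n(G') + 4k - 4$, so $n(G') \leq 8k/(t-4)$. Plugging this into the AKS bound gives $\chi(G') = O(\sqrt{k/(t \log k)})$, and demanding this be at most $t$ amounts to $t^3 \geq \Omega(k/\log k)$, which is achieved by our choice of $t$ after fixing $c_1$ large enough.

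For the lower bound (ii), I would exhibit, for each $k$, a triangle-free graph with chromatic number $\Omega(k^{1/3}/\log k)$ embeddable on $\mathbb{S}_k$. A natural candidate is the random graph $G(n,p)$ with $n = \Theta(k^{2/3})$ and edge probability $p = d/n$ where $d = \Theta(k^{1/3})$: the expected number of edges is $\Theta(nd) = \Theta(k)$, and the expected number of triangles is $\Theta(d^3) = \Theta(k)$, so deleting one edge per triangle yields a triangle-free graph $H$ still with $\Theta(k)$ edges. Bollobás' bound for the chromatic number of $G(n,p)$ survives this sparse deletion and gives $\chi(H) = \Omega(d/\log d) = \Omega(k^{1/3}/\log k)$. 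To certify embeddability on $\mathbb{S}_k$, one uses that $H$ is triangle-free with $\Theta(k)$ edges and tunes the constants so that the orientable genus, which is bounded by standard formulas in terms of $m(H)$ and $n(H)$, does not exceed $k$; alternatively, one replaces the purely random construction by a structured variant (for instance a suitable Cayley graph or a graph built from a Steiner-type system) for which the genus can be computed explicitly.

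The main obstacle in (i) is that the exponent $1/3$ together with the $\log^{1/3}$ improvement in the denominator requires the full strength of AKS rather than, say, a greedy bound $\chi \leq O(\sqrt{N})$ for triangle-free graphs, which would only produce $O(k^{1/3})$ without the logarithmic gain; the reduction threshold $t$ must be set to match this improvement exactly. In (ii), the real difficulty is to ensure that the orientable genus of the constructed triangle-free graph fits inside $k$: the Euler lower bound $\gamma \geq m/4 - n/2$ already forces $\gamma = \Theta(k)$ for our parameters, so a naive probabilistic construction may overshoot $k$ by a constant factor, and either the constants in $n$ and $d$ must be chosen with care or the probabilistic argument must be paired with an explicit genus-aware embedding.
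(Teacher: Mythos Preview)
The paper does not contain a proof of this theorem. It is quoted in the introduction as a known result of Gimbel and Thomassen~\cite{GiTh97}, serving only as background and motivation for the paper's own study of the \emph{dichromatic} number of surfaces. There is therefore nothing in the present paper to compare your proposal against.

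That said, your sketch for (i) is essentially the standard argument and is correct in outline: the degeneracy reduction via the girth-$4$ Euler inequality followed by the Ajtai--Koml\'os--Szemer\'edi bound $\chi(H)=O(\sqrt{N/\log N})$ for triangle-free $H$ on $N$ vertices is exactly what produces the exponent $1/3$ with the logarithmic gain. Your computation balancing $t$ against $\sqrt{k/(t\log k)}$ is the right one. For (ii) your outline is in the right spirit but, as you yourself note, is not a proof: controlling the genus of a random triangle-free graph to land inside $\mathbb{S}_k$ (rather than merely $\mathbb{S}_{Ck}$ for some uncontrolled constant $C$) requires real work, and the phrase ``tunes the constants'' hides the only genuine difficulty. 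If you want an actual proof you should consult the original paper of Gimbel and Thomassen, where the lower-bound construction is carried out carefully.
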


\bigskip

In 1982, Neumann Lara~\cite{Neu1982} introduced the notion of directed colouring  or dicolouring. 
A {\bf $k$-dicolouring} of a digraph is a partition of its vertex set into $k$ subsets inducing acyclic subdigraphs.
A digraph is {\bf $k$-dicolourable} if it has a $k$-dicolouring.  
The {\bf dichromatic number} of a digraph $D$, denoted by $\vec{\chi}(D)$, is the least integer $k$ such that $D$ is $k$-dicolourable. 

Let $G$ be an undirected graph. The {\bf bidirected graph} $\bid{G}$ is the digraph obtained from $G$ by replacing each edge by a {\bf digon}, that is a pair of oppositely directed arcs between the same end-vertices. Observe that $\chi(G) = \vec{\chi}(\bid{G})$ since any two adjacent vertices in $\bid{G}$ induce a directed cycle of length $2$.

The {\bf digirth} of a digraph is the length of a smallest directed cycle in it (or $+\infty$ if it is acyclic).
In view of the influence of the girth on the chromatic number of graphs on surfaces, it is natural to study the influence of the digirth on the dichromatic number. In particular, it is interesting to study the dichromatic number of  digraphs of digirth $3$, which are called {\bf oriented graphs}.
Alternatively, oriented graphs may be seen as the digraphs which can be obtained from (simple) graphs by orienting every edge, that is replacing each edge by exactly one of the two possible arcs between its end-vertices. If $\vec{G}$ is obtained from $G$ by orienting its edges, we say that $G$ is the {\bf underlying graph} of $\vec{G}$.   
It is easy to show that oriented planar graphs are $3$-dicolourable and Neumann Lara~\cite{Neu1982} proposed the following conjecture, which can be viewed as an analogue of Gr\"otzsch's Theorem (Theorem~\ref{grotzsch}).
\begin{conjecture}[Neumann Lara~\cite{Neu1982}]\label{conj_neumann}
Every oriented planar graph is $2$-dicolourable.
\end{conjecture}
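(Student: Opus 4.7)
The plan is to mimic the discharging approach that has succeeded for analogues such as Gr\"otzsch's Theorem (Theorem~\ref{grotzsch}). I would take a minimum counterexample $\vec{G}$: an oriented planar graph of smallest order and, subject to that, of smallest size, that is not $2$-dicolourable. The first step is to extract structural constraints on $\vec{G}$. One can check that $\vec{G}$ must be $2$-connected, have no source or sink (both extend trivially from any $2$-dicolouring of $\vec{G}-v$, since a vertex with no in- or no out-neighbour cannot lie on a monochromatic directed cycle), and contain no small reducible subconfiguration such as a vertex of low total degree whose removal allows at least one colour to avoid creating a monochromatic cycle through it.

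Given such constraints, I would apply discharging to a planar embedding of $\vec{G}$. Assigning initial charge $d(v)-4$ to each vertex $v$ and $\ell(f)-4$ to each face $f$, Euler's Formula (Theorem~\ref{thm:euler-form}) yields a total charge of $-8$. The target is to design discharging rules that transfer charge from large faces and high-degree vertices toward triangles and low-degree vertices so that, after discharging, every vertex and face has nonnegative charge \emph{unless} some specific unavoidable local configuration occurs. Each candidate configuration must then be shown to be reducible: any $2$-dicolouring of the digraph obtained by a local modification (say, contracting an arc or deleting a small substructure) extends back to $\vec{G}$, yielding the desired contradiction.

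The main obstacle, which has kept this conjecture open since 1982, is the subtlety of reducibility in the oriented setting. Triangles in an oriented graph come in two varieties, cyclic and transitive, which behave completely differently with respect to dicolouring: a transitive triangle is $1$-dicolourable, whereas a cyclic one needs $2$ colours. Any reducibility argument must therefore track not only the shape of a configuration but the precise orientation of every arc, and the case analysis explodes. Moreover, in contrast with Gr\"otzsch's Theorem, where forbidding triangles creates genuine slack in Euler's Formula (the average degree drops from below $6$ to below $4$), oriented planar graphs may be full triangulations, so there is essentially no global room in the discharging. I suspect that a successful proof, if it exists, requires either a genuinely new local reduction tailored to arc directions or a global potential-function argument in the spirit of Thomassen's short proofs on planar colourings; I cannot produce such an argument, and indeed Conjecture~\ref{conj_neumann} remains open.
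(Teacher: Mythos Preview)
The statement you were asked to address is Conjecture~\ref{conj_neumann}, not a theorem: the paper states it as an open problem due to Neumann-Lara and never claims to prove it. There is therefore no ``paper's own proof'' to compare against, and your write-up is appropriate in acknowledging this. Your sketch of a discharging strategy and your diagnosis of why reducibility is delicate in the oriented setting are reasonable commentary, but none of it constitutes a proof, as you yourself concede in the final sentence. In short: the paper and you agree that the conjecture is open, and there is nothing further to evaluate.
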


The conjecture is part of an active field of research. It has been verified for planar oriented graphs on at most 26 vertices~\cite{KV19} and holds for planar digraphs of digirth at least $4$~\cite{LM17}.

Analogously to the history of the chromatic number, in the present paper we study the dichromatic number of surfaces. 
The  {\bf dichromatic number} of a surface $\Sigma$, denoted by $\vec{\chi}(\Sigma)$, is the least integer $k$ such that every oriented graph embeddable on $\Sigma$ is $k$-dicolourable.

The {\bf arboricity} of a graph $G$, denoted by $a(G)$,  is the minimum integer $k$ such that $V(G)$ can be partitioned into $k$ sets, each of which induces a forest, that is an acyclic (non-directed) graph.

Let $\vec{G}$ be an oriented graph  and $G$ its underlying graph. Then 
 \begin{equation}\label{ineg}
 \vec{\chi}(\vec{G})\leq a(G) \leq \chi(G)
 \end{equation}

Kronk~\cite{Kr1969} proved that if a graph is embeddable on a surface $\Sigma$ with Euler characteristic $c \leq 1$, then
$a(G) \leq \left\lfloor \frac{9+\sqrt{49-24c(\Sigma)}}{4} \right\rfloor$.
By Eq. \eqref{ineg}, this immediately implies

\[\vec{\chi}(\Sigma) \leq \left\lfloor \frac{9+\sqrt{49-24c(\Sigma)}}{4} \right\rfloor\]

We first improve on this bound by determining the asymptotic behaviour of $\vec{\chi}(\Sigma)$. We show in Theorem~\ref{thm:bnd-gen}, that there exists two constants $a_1$ and $a_2$ such that, for every surface $\Sigma$ with Euler characteristic $c\leq -2$, we have
  \[
   a_1\frac{\sqrt{-c}}{\log(-c)} 
  \leq \vec{\chi}(\Sigma) \leq
  a_2  \frac{\sqrt{-c}}{\log(-c)}.
  \]

We then estimate the exact value of the dichromatic number of  surfaces close to the sphere.
Table~\ref{tab_dichi} summarizes the main results.

\begin{table}[!hbtp]
    \begin{center}
    \begin{tabular}{|c|c|c|c|}
        \hline
        $\Sigma$ & $c(\Sigma)$ & Bounds for $\vec{\chi}(\Sigma)$ & Reference \\
        \hline\hline
        Sphere $\mathbb{N}_0 = \mathbb{S}_0$ &
          $2$ & $2 \leq \vec{\chi} \leq 3 $  & Neumann Lara~\cite{Neu1982} \\
        \hline
        Projective plane $ \mathbb{N}_1$ & 
          $1$ & $\vec{\chi} = 3 $  & Corollary~\ref{cor:lowgenus} \\
        \hline
        Klein bottle $ \mathbb{N}_2$ &
          $0$ & $\vec{\chi} = 3$  & Corollary~\ref{cor:lowgenus}\\
        \hline
        Torus $ \mathbb{S}_1$ &
          $0$ & $\vec{\chi} = 3$  & Corollary~\ref{cor:lowgenus}\\
        \hline
        Dyck's surface $\mathbb{N}_3$ & $-1$ & $\vec{\chi} = 3$ & Corollary~\ref{cor:lowgenus}\\
        \hline
        $\mathbb{S}_2$, $\mathbb{N}_4$ & $-2$ & $3 \leq \vec{\chi} \leq 4$  & Theorems~\ref{thm:N1} and~\ref{thm:S5} \\
        \hline
        $\mathbb{N}_5$ & $-3$ & $3 \leq \vec{\chi} \leq 4$ & Theorems~\ref{thm:N1} and~\ref{thm:S5} \\
        \hline
        $\mathbb{S}_3$, $\mathbb{N}_6$ & $-4$ & $3 \leq \vec{\chi} \leq 4$ & Theorems~\ref{thm:N1} and~\ref{thm:S5} \\
        \hline
        $\mathbb{N}_7$ & $-5$ & $3 \leq \vec{\chi} \leq 4$ & Theorems~\ref{thm:N1} and~\ref{thm:S5} \\
        \hline
        $\mathbb{S}_4$, $\mathbb{N}_8$ & $-6$ & $3 \leq \vec{\chi} \leq 4$ & Theorems~\ref{thm:N1} and~\ref{thm:S5} \\
        \hline
        $\mathbb{N}_9$ & $-7$ & $3 \leq \vec{\chi} \leq 4$ & Theorems~\ref{thm:N1} and~\ref{thm:S5} \\
        \hline
        $\mathbb{S}_5$, $\mathbb{N}_{10}$ & $-8$ & $\vec{\chi} = 4$ & Corollary~\ref{cor:N10S5} \\
        \hline
    \end{tabular}
    \caption{\label{tab_dichi} Bounds on the dichromatic number of some surfaces.}
    \end{center}
\end{table}

%All computational results have been obtained by combining surftri~\cite{surftri_a,surftri_b} to generate surface triangulations, nauty~\cite{McKay201494} to generate graphs and orientations, and our own code to filter and analyze these. It can be found at~\url{https://github.com/ClementRambaud/cdicoloring}.
 
 \bigskip

Finally, we consider the computational complexity of the related (di)colourability problems.
Regarding undirected graphs, for any surface $\Sigma$ and any integer $k\geq 5$, there are only finitely many  $(k+1)$-critical graphs  (i.e. graphs $G$ such that $\chi(G)=k+1$ and $\chi(H) \leq k$ for any proper subgraph $H$ of $G$) embeddable on $\Sigma$. This was observed by 
Dirac~\cite{Dir57} for $k\geq 6$ and proved by Thomassen~\cite{Tho97} for $k=5$.
It follows that, for any surface $\Sigma$ and any integer $k\geq 5$, there is a polynomial-time algorithm that decides whether a graph $G$ embeddable on $\Sigma$ is $k$-colourable.
For smaller values of $k$, i.e. $k\in \{2,3,4\}$, there are infinitely many $(k+1)$-critical graphs embeddable on any surface $\Sigma$ distinct from the sphere. (For $k=4$, this follows from a result of Fisk~\cite{Fisk78}.)
For $k=2$, it is polynomial-time solvable to decide whether a graph is $2$-colourable.
In contrast, deciding whether a graph embeddable on the sphere (and thus on any other surface) is  $3$-colourable is NP-complete. (See \cite{GaJo79}).
For $k=4$, the problem remains open, except for the sphere, for which there is a trivial algorithm by the Four Colour Theorem.

%\begin{problem}
%Let $\Sigma$ be a surface distinct from the sphere. What is the complexity of deciding whether a graph $G$ embeddable on $\Sigma$ is $4$-colourable ?
%\end{problem}

\medskip
We are interested in the analogous problems for dicolouring.

\smallskip

\noindent\textsc{$\Sigma$-$k$-Dicolourability}\\
\textbf{Input}: A digraph $D$ embeddable on $\Sigma$.\\
\textbf{Question}: Is $D$ $k$-dicolourable ?

\smallskip

A natural idea is to consider $(k+1)$-dicritical digraphs. A digraph $D$ is {\bf $(k+1)$-dicritical} if $\vec{\chi}(D)=k+1$ and $\vec{\chi}(H)\leq k$ for every proper subdigraph $H$ of $D$. 
One easily derives from Euler's Formula that,  for any $k\geq 7$,  the number of $(k+1)$-dicritical digraphs embeddable on a surface is finite (Proposition~\ref{prop_8crit}). Adapting the standard method for critical graphs to dicritical digraphs, we prove in Corollary~\ref{cor:finite-dicrit}, that  the number of $(k+1)$-dicritical digraphs embeddable on a surface is finite for any $k\geq 6$.
Consequently, for any surface $\Sigma$ and any integer $k\geq 6$, one can solve \textsc{$\Sigma$-$k$-Dicolourability} in polynomial time.

When $k=2$, in contrast to the undirected case, Bokal et al.~\cite{Bokal2004} showed that deciding whether a digraph is $2$-dicolourable is NP-complete. We show (Theorem~\ref{thm:planar-2-NP}) that it remains NP-complete when restricted to digraphs embeddable on the sphere (and hence in any surface). In other words, \textsc{$\Sigma$-$2$-Dicolourability} is NP-complete for any surface $\Sigma$.
Since the chromatic number of $G$ is equal to the dichromatic number of the bidirected graph $\bid{G}$, the NP-completeness of the $3$-colourability of a graph embeddable in $\Sigma$ implies that \textsc{$\Sigma$-$3$-Dicolourability} is NP-complete for any surface $\Sigma$.
The complexity of \textsc{$\Sigma$-$k$-Dicolourability} for $k\in \{4,5\}$ and $\Sigma$ different from the sphere remains open, see Problem~\ref{prob:45}.

% For a surface $\Sigma$ such that $-c(\Sigma)$ is not too large, then some of these problems are trivially polynomial-time solvable as the answer is always `Yes'.
% (See e.g. Table~\ref{tab_dichi}.) But otherwise, the complexity is unknown.

\medskip

We then consider the restriction of \textsc{$\Sigma$-$k$-Dicolourability} to oriented graphs.

\smallskip

\noindent \textsc{$\Sigma$-Oriented-$k$-Dicolourability}\\
\textbf{Input}: An oriented $D$ embeddable on $\Sigma$.\\
\textbf{Question}: Is $D$ $k$-dicolourable ?

\smallskip

For any surface $\Sigma$ and any integer $k\geq 3$, there are only finitely many  $(k+1)$-dicritical oriented graphs embeddable on $\Sigma$. For $k\geq 4$, this follows easily from Euler's Formula (see Proposition~\ref{prop_5crit}); for $k=3$, it was proved by  Kostochka and Stiebitz~\cite{KosStie2020} (See Theorem~\ref{thm:4crit}). This implies that, for any surface $\Sigma$ and any integer $k\geq 3$, one can solve \textsc{$\Sigma$-Oriented-$k$-Dicolourability} in polynomial time.
Hence we are left with the complexity of \textsc{$\Sigma$-Oriented-$2$-Dicolourability}. 

% \begin{problem}
% Let $\Sigma$ be a surface.
% What is complexity of {\sc $\Sigma$-2-Dicolourability} ?
% \end{problem}

If Conjecture~\ref{conj_neumann} is true, then {\sc $\mathbb{S}_0$-ORIENTED-2-Dicolourability} can be trivially solved in polynomial time because the answer is always positive. Conversely, we show in Theorem~\ref{thm:NP} that if  Conjecture~\ref{conj_neumann}  is false then {\sc $\mathbb{S}_0$-ORIENTED-2-Dicolourability} is NP-complete. %The complexity of {\sc $\Sigma$-ORIENTED-2-Dicolourability} remains open, see~\ref{prob:OR}

\section{Preliminaries}
%%%%%%%%%%%%%%%%%%%%

\subsection{Dicritical oriented graphs}

%%%%%%%%%%%%%%%%%%%%%%%%%

Recall that a digraph $D$ is {\bf $k$-dicritical} if $\vec{\chi}(D)=k$ and $\vec{\chi}(H)<k$ for every proper subdigraph $H$ of $D$.
In this subsection, we establish some results on dicritical oriented graphs 
 %and  dicritical oriented graphs 
which will be useful to prove our main results.

The following proposition is well-known and easy to prove. Note that it is the only result of this section about digraphs, every other result is about oriented graphs.
\begin{proposition}\label{prop_crit_deg_digraphe}
Let $D$ be a  $k$-dicritical digraph.
Then $d^+(v),d^-(v) \geq k-1$ for all $v\in V(D)$.
\end{proposition}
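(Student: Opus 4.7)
The plan is the standard dicritical extension argument, which is the directed analogue of the classical fact that $k$-critical undirected graphs have minimum degree at least $k-1$. Suppose for contradiction that some vertex $v$ satisfies $d^+(v) \leq k-2$. Since $D$ is $k$-dicritical, the subdigraph $D-v$ admits a $(k-1)$-dicolouring $\phi$ with colour classes $V_1, \ldots, V_{k-1}$. Because $v$ has at most $k-2$ out-neighbours, the pigeonhole principle furnishes an index $i$ such that no out-neighbour of $v$ lies in $V_i$. Extend $\phi$ by setting $\phi(v) = i$. In the resulting colouring $v$ is a sink of $D[V_i \cup \{v\}]$, so no directed cycle of this subdigraph can pass through $v$; since $D[V_i]$ was already acyclic, so is $D[V_i \cup \{v\}]$. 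Hence $\phi$ extends to a $(k-1)$-dicolouring of $D$, contradicting $\vec{\chi}(D)=k$.

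The case $d^-(v) \leq k-2$ is completely symmetric: pick a colour $i$ not represented among the in-neighbours of $v$, set $\phi(v) = i$, and observe that $v$ becomes a source of $D[V_i \cup \{v\}]$, so again no directed cycle of this subdigraph passes through $v$. This yields a $(k-1)$-dicolouring of $D$, the desired contradiction. There is essentially no obstacle: the key observation is simply that a directed cycle through $v$ in a colour class must both enter and leave $v$, so forbidding either all out-neighbours or all in-neighbours of $v$ from sharing the colour of $v$ is enough to preclude such a cycle, with no further global reasoning required.
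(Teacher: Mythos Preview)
Your proof is correct and is exactly the standard argument; the paper itself omits the proof, noting only that the proposition is ``well-known and easy to prove.''
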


The next  result gives an upper bound on the number of $k$-dicritical oriented graphs embeddable on a surface, when $k \geq 5$. 
This result is used to prove Theorem~\ref{thm:bnd-gen}. 
\begin{proposition}\label{prop_5crit}
Let $k\geq 5$ and let $\vec{G}$ be a  $k$-dicritical oriented graph embedded in a surface with Euler characteristic $c$. Then
  $$
  n(\vec{G}) \leq \frac{-3c}{k-4}
  $$
\end{proposition}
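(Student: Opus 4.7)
The plan is to combine the minimum-degree condition of Proposition~\ref{prop_crit_deg_digraphe} with Euler's formula (Theorem~\ref{thm:euler-gen}), exploiting the fact that $\vec{G}$ being oriented means arcs are not doubled.

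More precisely, first I would note that, since $\vec{G}$ is $k$-dicritical, Proposition~\ref{prop_crit_deg_digraphe} gives $d^+(v)\geq k-1$ and $d^-(v)\geq k-1$ for every vertex $v$. Because $\vec{G}$ is an oriented graph, between any two vertices there is at most one arc, so the total degree in the underlying graph $G$ satisfies $d_G(v)=d^+(v)+d^-(v)\geq 2(k-1)$. Summing over all vertices and dividing by two yields
\[
m(\vec{G}) = m(G) \geq (k-1)\,n(\vec{G}).
\]

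Next I would apply Theorem~\ref{thm:euler-gen} to the underlying graph $G$, which is embeddable on the same surface of Euler characteristic $c$, to obtain
\[
m(G) \leq 3\,n(G) - 3c.
\]
Chaining the two inequalities gives $(k-1)\,n(\vec{G}) \leq 3\,n(\vec{G}) - 3c$, i.e., $(k-4)\,n(\vec{G}) \leq -3c$. Since $k\geq 5$ we have $k-4\geq 1 > 0$, so dividing yields the desired bound $n(\vec{G})\leq \frac{-3c}{k-4}$.

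There is no real obstacle here: the argument is a direct two-line counting once the oriented (not bidirected) nature of $\vec{G}$ is used to identify the arc count with the edge count of the underlying simple graph. The only subtlety to mention explicitly is this identification, because without it the minimum degree condition would only give $m(\vec{G})\geq (k-1)n(\vec{G})$ in the digraph but the Euler bound would have to be compared against the number of underlying edges; the point is that for an oriented graph these two quantities coincide.
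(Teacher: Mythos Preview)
Your proof is correct and essentially identical to the paper's own argument. The only cosmetic difference is that the paper phrases the Euler bound via the average degree $\Ad(\vec{G})\leq 6 - \frac{6c}{n(\vec{G})}$ rather than the edge count $m(G)\leq 3n(G)-3c$, but these are equivalent and the remaining algebra is the same.
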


\begin{proof}
By Proposition~\ref{prop_crit_deg_digraphe}, $d^+(v),d^-(v) \geq k-1$ for every vertex $v$ of $\vec{G}$.
Moreover, by Theorem~\ref{thm:euler-gen}, $\Ad(\vec{G}) \leq 6 -  \frac{6c}{n(\vec{G})}$
and so $2(k-1) \leq 6 - \frac{6c}{n(\vec{G})}$.
Hence $n(\vec{G}) (k-1-3) \leq -3c$ which yields the result.
\end{proof}

Kostochka and Stieblitz~\cite{KosStie2020} proved that for every $4$-dicritical oriented graph $G$,  $3m(\vec{G}) \geq 10 n(\vec{G}) - 4$. It implies that the $4$-dicritical oriented graphs that are embeddable on a given surface have bounded number of vertices. This result is used to prove Theorem~\ref{thm:bnd-gen} and Theorem~\ref{thm:N3}.  

%\begin{theorem}[Kostochka and Stiebitz~\cite{KosStie2020}]\label{thm_densite_4crit}
%used for N_3 and S_5
%If $\vec{G}$ is a $4$-dicritical oriented graph, then $3m(\vec{G}) \geq 10 n(\vec{G}) - 4$.
%\end{theorem}

%It implies the following corollary that bounds the order of $4$-dicritical oriented graphs that are embeddable on a given surface.
 
\begin{theorem}[Kostochka and Stiebitz~\cite{KosStie2020}]\label{thm:4crit} 
% this is used for N_3 and S_5
 Let $\vec{G}$ be a $4$-dicritical oriented graphs embedded in a surface with Euler characteristic $c$. Then $n(\vec{G}) \leq 4 - 9c$.
\end{theorem}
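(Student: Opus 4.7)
The plan is to combine the edge-density bound of Kostochka and Stiebitz, already quoted in the paragraph preceding the theorem, with Euler's formula for graphs embedded on a surface (Theorem~\ref{thm:euler-gen}). Since an oriented graph has at most one arc between any pair of vertices, its underlying graph is simple, and hence Theorem~\ref{thm:euler-gen} applies directly to give
\[
m(\vec{G}) \leq 3\, n(\vec{G}) - 3c.
\]

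Next I would invoke the Kostochka--Stiebitz density inequality for 4-dicritical oriented graphs:
\[
3\, m(\vec{G}) \geq 10\, n(\vec{G}) - 4.
\]
Chaining the two bounds yields
\[
10\, n(\vec{G}) - 4 \leq 3\, m(\vec{G}) \leq 9\, n(\vec{G}) - 9c,
\]
and rearranging gives $n(\vec{G}) \leq 4 - 9c$, as desired.

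There is essentially no obstacle in the argument itself: the whole content is packaged inside the Kostochka--Stiebitz lower bound on $m(\vec{G})$, which is being cited as a black box. The only thing to be slightly careful about is that Theorem~\ref{thm:euler-gen} is applied to the underlying simple graph of $\vec{G}$ (which is legitimate precisely because $\vec{G}$ is an oriented graph, so no digons arise and the simple-graph edge count coincides with the arc count). No further case analysis or structural surgery is needed; the result follows in two lines.
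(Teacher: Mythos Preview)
Your proof is correct and matches the paper's own argument essentially line for line: combine the Kostochka--Stiebitz bound $3m(\vec{G}) \geq 10n(\vec{G}) - 4$ with Euler's formula $m(\vec{G}) \leq 3n(\vec{G}) - 3c$ and rearrange. Your remark that Theorem~\ref{thm:euler-gen} applies because an oriented graph has a simple underlying graph is a nice clarification the paper leaves implicit.
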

\begin{proof}
By Euler's formula, we have $m(\vec{G}) \leq 3n(\vec{G}) - 3c$ and thus
$10n(\vec{G}) -4 \leq 3 m(\vec{G}) \leq 9n(G) - 9c$. Therefore $n(\vec{G}) \leq 4 - 9c$.
\end{proof}

A graph $G$ is {\bf non-separable} if it is connected and $G-v$ is connected for all $v\in V(G)$.
Let $G$ be a graph. A {\bf block} of $G$ is a subgraph which is non-separable and is maximal with respect to this property. 
A {\bf cactus}  is a graph whose blocks are cycles or edges.
 A {\bf directed cactus} is an oriented graph whose blocks are directed cycles or arcs.
 In other words, a directed cactus is an oriented cactus in which every cycle is directed.
The following result is the main tool in the proofs of 
 Theorem~\ref{thm:N3} and Theorem~\ref{thm:S5}. We state it here for oriented graphs, but a more general version holding for digraphs exists and is used latter in the paper, see~Theorem~\ref{thm:gallai}. 

\begin{theorem}[Bang-Jensen et al.~\cite{hajosconstruct2019}] \label{thm:gallai_or}
Let $\vec{G}$ be a $k$-dicritical oriented graph.
The subdigraph induced by the vertices of in- and out-degree $k-1$  is a directed cactus.
\end{theorem}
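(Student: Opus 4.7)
The plan is to argue by contradiction. Let $L$ denote the set of vertices of $\vec{G}$ with $d^+(v)=d^-(v)=k-1$, set $H=\vec{G}[L]$, and suppose for contradiction that some block $B$ of $H$ is neither a single arc nor a directed cycle; I will build a $(k-1)$-dicolouring of $\vec{G}$, contradicting $k$-dicriticality.

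First I would analyse $B$ structurally. Since $B$ is a non-trivial block, it is $2$-connected as an undirected graph and has at least three vertices. If every $w\in V(B)$ satisfied $d_B^+(w)\leq 1$ and $d_B^-(w)\leq 1$, then every vertex would have undirected degree at most $2$ in $B$; combined with $2$-connectivity, this would force $B$ to be an undirected cycle on which every vertex has in- and out-degree exactly $1$, and a routine propagation along the cycle then forces $B$ to be a directed cycle---contrary to our assumption. So there exists $v\in V(B)$ with $d_B^+(v)\geq 2$ (the case $d_B^-(v)\geq 2$ is symmetric); fix two distinct out-neighbours $u_1,u_2\in V(B)$ of $v$.

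Next I would exploit $v\in L$ together with $k$-dicriticality. By $k$-dicriticality, $\vec{G}-v$ admits a $(k-1)$-dicolouring $\phi$. Since $d^+(v)=d^-(v)=k-1$ and none of the $k-1$ colours can extend $\phi$ to $v$, a pigeonhole argument gives that each colour appears exactly once in $N^+(v)$ and exactly once in $N^-(v)$; moreover, for every colour $c$ there must be a $\phi$-monochromatic directed path, inside colour class $c$, from the unique colour-$c$ out-neighbour of $v$ to the unique colour-$c$ in-neighbour of $v$ (otherwise colour $c$ could be assigned to $v$). In particular $\phi(u_1)\neq\phi(u_2)$, say $\phi(u_1)=1$ and $\phi(u_2)=2$.

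The heart of the argument is then a Kempe-chain modification along $B$. Using $2$-connectivity of $B$, I would pick an undirected $u_1$--$u_2$ path $P$ in $B-v$; all vertices of $P$ lie in $L$, so the same rigid pigeonhole description applies around each of them. By swapping colours $1$ and $2$ along a suitable initial segment of $P$ (or a carefully chosen variant thereof), I would produce a new $(k-1)$-dicolouring $\phi'$ of $\vec{G}-v$ in which some colour, say $1$, no longer satisfies the blocking condition at $v$---either no out-neighbour of $v$ is coloured $1$, or the required $\phi'$-monochromatic colour-$1$ path from $N^+(v)$ to $N^-(v)$ is destroyed. Assigning $v$ colour $1$ then yields a $(k-1)$-dicolouring of $\vec{G}$, the desired contradiction. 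The main obstacle lies exactly here: Kempe swaps in digraphs are genuinely trickier than in undirected graphs, because colour classes need only be acyclic rather than independent, and so a naive swap on a $\{1,2\}$-bichromatic subdigraph may destroy acyclicity of colour class $1$ or $2$ elsewhere. The decisive leverage is that every vertex of $P$ lies in $L$: the rigid colour-distribution around each such vertex mirrors that around $v$, so the effect of the swap propagates tightly along $P$ rather than spreading uncontrollably, ultimately freeing a colour at $v$.
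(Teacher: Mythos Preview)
The paper does not prove this theorem; it is quoted from Bang-Jensen et al.\ and immediately noted to be the oriented special case of the general digraph version (Theorem~\ref{thm:gallai}), which in turn is a consequence of the list-dicolouring Brooks analogue (Theorem~\ref{thm_gallai_list}). So there is no in-paper proof to match against; the relevant comparison is with the standard list-dicolouring approach: colour $\vec{G}-V(H)$ with $k-1$ colours, give each $v\in V(H)$ the list of colours not already ``blocked'' via its out-neighbours outside $H$, and invoke Theorem~\ref{thm_gallai_list} on $H$. Your approach is genuinely different---a direct Kempe-type argument---but as written it has a real gap at the decisive step.

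Your setup through the pigeonhole analysis at $v$ is fine: each colour appears exactly once in $N^+(v)$ and once in $N^-(v)$, and for each colour $c$ there is a monochromatic directed path from the colour-$c$ out-neighbour to the colour-$c$ in-neighbour. The problem is the swap. You propose to ``swap colours $1$ and $2$ along a suitable initial segment of $P$'', but you never say what object you are swapping on, nor why the result is still a dicolouring. In the undirected Gallai argument the swap is performed on a connected component of the $\{1,2\}$-subgraph, and properness is automatic. In the directed setting colour classes are merely acyclic, and recolouring a single vertex $w$ from $1$ to $2$ creates a monochromatic cycle whenever some colour-$2$ directed path runs from an out-neighbour of $w$ to an in-neighbour of $w$; nothing you have established rules this out. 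Your justification---that every vertex of $P$ lies in $L$ and hence enjoys ``the same rigid colour-distribution as $v$''---is not correct: the rigidity at $v$ (one out-neighbour and one in-neighbour per colour, plus a blocking path) comes from the fact that $v$ has been \emph{deleted} and the colouring fails to extend. A vertex $w\in P$ is present and already coloured; its $k-1$ out-neighbours can carry repeated colours, and no blocking-path structure is forced. Moreover, $P$ is only an \emph{undirected} path in $B-v$, so its vertices need not be coloured $1$ or $2$ at all, and ``initial segment'' is not even well defined for the purpose of a $\{1,2\}$-swap.

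In short, the Kempe-chain heuristic is the right intuition in the undirected theorem but does not transfer verbatim; you would need either a directed analogue of Kempe components that provably preserves acyclicity (none is supplied), or to switch to the list-dicolouring route actually used in the literature and referenced in the paper.
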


The next two results are technical lemmas on cacti that we use to prove Theorem~\ref{thm:N3} and Theorem~\ref{thm:S5}. 

\begin{lemma}\label{fait_t}
Let $G$ be a cactus.
Then $m(G) \leq \frac{3}{2}(n(G) - 1)$.
Moreover,  equality holds if and only if $G$ is connected and every block is a triangle.
\end{lemma}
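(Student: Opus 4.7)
The plan is to handle connected cacti first via block decomposition, then extend to the disconnected case. Suppose $G$ is connected with blocks $B_1, \ldots, B_b$. Each block is either a single edge ($n(B_i) = 2$, $m(B_i) = 1$) or a cycle of length $\ell_i \geq 3$ ($n(B_i) = m(B_i) = \ell_i$), so in every case $m(B_i) \leq \tfrac{3}{2}(n(B_i) - 1)$: this reads $1 < \tfrac{3}{2}$ for an edge block, and $\ell_i \leq \tfrac{3}{2}(\ell_i - 1)$ for a cycle, which is equivalent to $\ell_i \geq 3$, with equality iff $\ell_i = 3$.

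The key identity I would establish is $\sum_{i=1}^b (n(B_i) - 1) = n(G) - 1$; combined with $m(G) = \sum_i m(B_i)$ (since distinct blocks share no edge), summing the per-block inequalities yields the lemma for connected $G$. I would prove the identity by counting vertex-block incidences via the block-cut tree $T_G$: writing $c$ for the number of cut vertices of $G$, $T_G$ has $b + c$ nodes and, being a tree, exactly $b + c - 1$ edges, each encoding an incidence of a cut vertex with a block containing it. Since non-cut vertices lie in a unique block, this gives $\sum_i n(B_i) = (n(G) - c) + (b + c - 1) = n(G) + b - 1$, i.e.\ the claimed identity. An equivalent and perhaps cleaner alternative is induction on $b$, peeling off a leaf block of $T_G$ at each step.

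For a possibly disconnected cactus with components $G_1, \ldots, G_k$, applying the connected case to each $G_j$ and summing gives
\[
m(G) \leq \tfrac{3}{2}\sum_{j=1}^k (n(G_j) - 1) = \tfrac{3}{2}(n(G) - k) \leq \tfrac{3}{2}(n(G) - 1),
\]
with strict inequality whenever $k \geq 2$. So the general bound follows, and equality forces $k = 1$ together with every block being a triangle; conversely, connected cacti whose every block is a triangle clearly attain equality. The only substantive step is the identity $\sum_i(n(B_i) - 1) = n(G) - 1$; everything else is elementary bookkeeping, and the main risk is simply a miscount in the block-cut tree argument.
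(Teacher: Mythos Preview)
Your proof is correct. Both your argument and the paper's rest on the same per-block inequality $m(B_i)\leq \tfrac{3}{2}(n(B_i)-1)$ with equality only for triangles, but the way you aggregate differs: the paper proceeds by induction on $n(G)$, at each step peeling off a leaf block $C$ at its attachment vertex and checking that $-\tfrac12 n(C)+\tfrac32-\epsilon(C)\leq 0$ (with equality only when $C$ is a triangle), whereas you prove the global identity $\sum_i(n(B_i)-1)=n(G)-1$ in one shot via the block--cut tree and then sum. Your route is slightly cleaner and makes the equality characterisation drop out immediately; the paper's inductive peeling is what you yourself flag as an ``equivalent alternative'', so the two arguments are close cousins rather than genuinely distinct strategies.
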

\begin{proof}
We proceed by induction on $n(G)$ the number of vertices of $G$, the result being trivial if $n(G)\leq 2$.

Suppose now that $n(G) > 2$. 
If $G$ is not connected, then applying the induction hypothesis on each connected component, and summing the obtained inequalities give the result. 

Suppose now that $G$ is connected. If it is a cycle or an edge, then $m(G)\leq n(G)$ so the result holds. 
If $G$ is not a cycle, then $G$ contains a leaf block $C$ with attachment $x$. 
Let $G'$ be the graph obtained from $G$ by deleting all vertices of $C$ except $x$.
We have $n(G')= n(G) - n(C) +1$ and $m(G') = m(G) - m(C)$ and $m(C) = n(C)  - \epsilon(C)$, with $\epsilon(C) =1$ if and only if $C$ is an edge, and $\epsilon(C)=0$ otherwise. 
By the induction hypothesis, $m(G') \leq \frac{3}{2} (n(G') - 1)$.
Hence
\[
\begin{split}
m(G) &\leq \frac{3}{2} (n(G) - n(C)+1 - 1) + n(C) - \epsilon(C) \\
     &=    \frac{3}{2} (n(G) - 1) - \frac{1}{2} n(C) + \frac{3}{2} -\epsilon(C) \\
     &\leq \frac{3}{2} (n(G) - 1)
\end{split}
\]
Moreover, $- \frac{1}{2} n(C) + \frac{3}{2} -\epsilon(C) = 0$ if and only if $C$ is a triangle, which implies that the bound is tight if and only if all blocks are triangles. 
\end{proof}

\begin{lemma}\label{lem:foret-cactus}
Every cactus of order $n$ contains an induced forest of order $\lceil \frac{2}{3}n \rceil$.
\end{lemma}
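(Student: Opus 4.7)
The plan is to prove the statement by induction on $n$. The base cases $n \leq 2$ are immediate because any cactus on at most two vertices is already a forest. For the inductive step, assume $n \geq 3$. If $G$ is acyclic, take $F = V(G)$; if $G$ is disconnected, apply the hypothesis to each component and use $\sum_i \lceil 2n_i/3 \rceil \geq \lceil 2n/3 \rceil$. Otherwise $G$ is connected with at least one cycle, so we may select a leaf block $B$ of its block-cut tree with attachment vertex $x$ (if $G$ itself is $2$-connected, then $G = B$ and we choose $x$ arbitrarily). We then distinguish three cases according to the type of $B$.

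Case 1: $B$ is an edge $xy$, so $y$ is a leaf of $G$. Apply induction to $G - y$ to obtain an induced forest $F'$ of order $\geq \lceil 2(n-1)/3 \rceil$, and set $F = F' \cup \{y\}$. Since $y$ has a unique neighbour in $G$, $F$ is still a forest, and a short check gives $\lceil 2(n-1)/3 \rceil + 1 \geq \lceil 2n/3 \rceil$.

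Case 2: $B$ is a cycle $x v_1 v_2 \cdots v_{t-1} x$ with $t \geq 4$. Apply induction to $G - \{v_1, \ldots, v_{t-1}\}$ to obtain an induced forest $F'$ of order $\geq \lceil 2(n-t+1)/3 \rceil$. Extend $F'$ by adding at least $t - 2$ of the vertices $v_1, \ldots, v_{t-1}$: take all $t-1$ of them if $x \notin F'$, and otherwise omit one of $v_1, v_{t-1}$ so that the cycle $B$ is not closed. The outcome is an induced forest of order $\geq \lceil 2(n - t + 1)/3 \rceil + t - 2 \geq \lceil 2n/3 \rceil$, the last step using the subadditivity of the ceiling function together with $\lceil 2(t-1)/3 \rceil \leq t - 2$, an inequality which holds precisely when $t \geq 4$.

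Case 3 is the main obstacle: $B$ is a triangle $\{x, y, z\}$, and the argument of Case 2 breaks down because $\lceil 2(t-1)/3 \rceil = 2 > 1 = t - 2$ when $t = 3$. The resolution is to sacrifice the cut vertex itself. Apply induction to $G - \{x, y, z\}$ (still a cactus, possibly disconnected, on $n - 3$ vertices) to obtain an induced forest $F''$ of order $\geq \lceil 2(n-3)/3 \rceil$, and set $F = F'' \cup \{y, z\}$. Since $y$ and $z$ have no neighbours in $G$ outside $B$ and $x \notin F$, the only new edge introduced is $yz$, so $F$ remains a forest; finally, $\lceil 2(n-3)/3 \rceil + 2 = \lceil 2n/3 \rceil$, closing the induction.
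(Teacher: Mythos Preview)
Your proof is correct and follows the same inductive strategy as the paper: peel off a leaf block, apply the induction hypothesis to the remainder, and reattach most of the block's vertices. The only difference is organizational. The paper treats all cycles uniformly: for a leaf cycle $C$ with attachment $x$, it applies induction to $G'=G-(V(C)\setminus\{x\})$, obtains a forest $X$, and returns $X\cup V(C)\setminus\{y\}$ for one $y\in V(C)\setminus\{x\}$, using the single inequality $\lceil \tfrac{2}{3}(n-n(C)+1)\rceil + n(C)-1 \ge \lceil \tfrac{2}{3}n\rceil$ valid for all $n(C)\ge 3$. You instead split off the triangle case and there delete the attachment $x$ as well, which is a neat way to recover the lost unit exactly (since $\lceil 2(n-3)/3\rceil + 2 = \lceil 2n/3\rceil$). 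Both routes are equally short; the paper's is slightly more uniform, while yours makes the tight triangle case more explicit.
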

\begin{proof}
By induction on $n$.
Let $G$ be a cactus of order $n$.
If $G$ is not connected, then we have the result by applying the induction on each connected component and summing the obtained inequalities.

If $G$ is a cycle or an edge, the result is clear.
If not, then $G$ admits a leaf block $C$ with attachment $x$. 
Let $G'$ be the graph obtained from $G$ by removing all vertices of $V(C)$ except $x$. 
By the induction hypothesis, $G'$ contains a set $X$ of order at least 
$\lceil \frac{2}{3}(n-n(C)+1) \rceil$ such that the induced subgraph $G\langle X\rangle$ is a forest.
Let $y$ be a vertex of $V(C)\setminus \{x\}$.
If $C$ is an edge, then $G\langle X\cup \{y\}\rangle $ is a forest of order  at least  $\lceil \frac{2}{3}(n-n(C)+1) \rceil +1 \geq \lceil \frac{2}{3}n \rceil$. 
If $C$ is a cycle, then $G\langle X\cup V(C) \setminus \{y\}\rangle $ is a forest of order  at least $\lceil \frac{2}{3}(n-n(C)+1) \rceil + n(C) -1 \geq  \lceil \frac{2}{3}n \rceil$ because $n(C)\geq 3$.
\end{proof}

% 
% 
% \subsection{Bounds on the dichromatic number of oriented graphs}
% %%%%%%%%%%%%%%%%%%%%%%%%%%%%%%%%%%
% 
% 
% The two following theorems give bounds on the dichromatic number of  oriented graphs and tournaments in terms of their number of vertices. They will be useful in our proofs. 
% 

%Ainsi, comme $\chi(K_{H(c)})) = H(c)$, la borne de \textsc{Heawood} est atteinte.

\subsection{Large acyclic subdigraphs and dichromatic number in oriented graphs}
%%%%%%%%%%%%%%%%%%%%%%%%%%%%%

Note that, in a tournament, the induced acyclic subdigraphs are the transitive subtournaments. The following classic result of Stearns is used to prove Proposition~\ref{prop_n_leq15}. 

\begin{lemma}[Stearns~\cite{S59}]\label{lem_acyclique_log}
Every tournament of order $n$ has an induced acyclic subdigraph of order $\lfloor \log_2 n \rfloor + 1$. 
\end{lemma}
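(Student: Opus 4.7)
The plan is to prove the lemma by strong induction on $n$. The base case $n=1$ is immediate, since a single vertex is an acyclic induced subdigraph and $\lfloor \log_2 1 \rfloor + 1 = 1$.

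For the inductive step with $n \geq 2$, I would pick an arbitrary vertex $v$ of the tournament $T$. Since $d^+(v) + d^-(v) = n-1$, at least one of $d^+(v)$ and $d^-(v)$ is at least $m := \lceil (n-1)/2 \rceil$. Reversing every arc of $T$ preserves both the order and the property of being transitive (equivalently acyclic), so without loss of generality $v$ has a set $N$ of at least $m$ out-neighbours. Consider the subtournament $T'$ induced by $N$. By the induction hypothesis, $T'$ contains an induced transitive subtournament $S$ on $\lfloor \log_2 m \rfloor + 1$ vertices. Because $v$ dominates every vertex of $N$, placing $v$ at the top of the transitive order of $S$ yields an induced transitive subtournament of $T$ on $\lfloor \log_2 m \rfloor + 2$ vertices.

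It remains to verify that $\lfloor \log_2 m \rfloor + 2 \geq \lfloor \log_2 n \rfloor + 1$, i.e.\ that $\lfloor \log_2 m \rfloor \geq \lfloor \log_2 n \rfloor - 1$. Setting $k := \lfloor \log_2 n \rfloor$ so that $2^k \leq n < 2^{k+1}$, this reduces to $m \geq 2^{k-1}$. Since $n \geq 2^k$, we have $(n-1)/2 \geq 2^{k-1} - \tfrac{1}{2}$, and taking the ceiling gives $m = \lceil (n-1)/2 \rceil \geq 2^{k-1}$, as required.

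The argument is essentially straightforward; the only step that needs care is the floor/ceiling computation above, which is the main (mild) obstacle since a naive bound $m \geq n/2$ would be a real number and could be off by one at powers of $2$. Handling the halving via $\lceil (n-1)/2 \rceil$ rather than $\lfloor n/2 \rfloor$ avoids this issue and makes the induction go through cleanly.
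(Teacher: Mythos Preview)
Your proof is correct and is precisely the classical inductive argument due to Stearns: pick a vertex, pass to the larger of its in- or out-neighbourhood, and recurse. The paper itself does not give a proof of this lemma at all; it merely states it with a citation to~\cite{S59}, so there is nothing to compare against beyond noting that your argument is the standard one.
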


Erd\H{o}s and Moser~\cite{erdos_moser} proved that, for every $n \geq 2$, there exists a tournament of order $n$   whose largest acyclic subdigraph has at most  $\lfloor 2\log_2 n \rfloor + 1$ vertices.
%A result of Erd\H{o}s and Moser provides a complementary result.
%\begin{lemma}[Erd\H{o}s and Moser~\cite{erdos_moser}]\label{lem_acyclique_guy}
%For every $n \geq 2$ there exists a tournament of order $n$ whose largest induced acyclic subdigraph has at most $\lfloor 2\log_2 n \rfloor + 1$ vertices.
%\end{lemma}
Since the dichromatic number of an $n$-vertex digraph is at least $n$ divided by the order of a largest acyclic induced subdigraph, we get the following, used to prove Theorem~\ref{thm:bnd-gen}. 

\begin{proposition}[Erd\H{o}s and Moser~\cite{erdos_moser}]\label{prop:exist-tournament}
 For every $n \geq 2$, there exists a tournament $T$ of order $n$ such that $\vec{\chi}(T) \geq \frac{n}{2\log (n) + 1}$.
\end{proposition}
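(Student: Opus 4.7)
The plan is to combine two ingredients already mentioned in the paragraph preceding the statement: the Erd\H{o}s--Moser bound on the largest transitive subtournament of a random tournament, together with the elementary fact that the dichromatic number of an $n$-vertex digraph is at least $n$ divided by the order of its largest induced acyclic subdigraph.

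First, I would record the elementary lower bound. Given a digraph $D$ on $n$ vertices, let $\alpha(D)$ denote the maximum order of an induced acyclic subdigraph of $D$. In any $\vec{\chi}(D)$-dicolouring, each colour class induces an acyclic subdigraph, hence has at most $\alpha(D)$ vertices; summing over the classes gives $n \leq \vec{\chi}(D)\cdot\alpha(D)$, i.e.\ $\vec{\chi}(D) \geq n/\alpha(D)$.

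Second, I would invoke the Erd\H{o}s--Moser theorem cited just above the statement: for every $n \geq 2$ there exists a tournament $T$ on $n$ vertices such that $\alpha(T) \leq \lfloor 2\log_2 n\rfloor + 1$. (For completeness, this follows from the probabilistic method: in a uniformly random tournament on $n$ vertices, the expected number of transitive subtournaments on $k$ vertices is $\binom{n}{k} k!\,2^{-\binom{k}{2}}$, which drops below $1$ once $k \geq \lfloor 2\log_2 n\rfloor + 2$, so some tournament avoids any transitive subtournament of that size, and in a tournament the induced acyclic subdigraphs are exactly the transitive subtournaments.)

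Combining the two steps, for the tournament $T$ provided by Erd\H{o}s--Moser one obtains
\[
\vec{\chi}(T) \;\geq\; \frac{n}{\alpha(T)} \;\geq\; \frac{n}{\lfloor 2\log_2 n\rfloor +1} \;\geq\; \frac{n}{2\log n + 1},
\]
which is exactly the desired inequality. There is no real obstacle here: once the Erd\H{o}s--Moser bound is accepted as a black box, the proof is a one-line consequence of the pigeonhole argument bounding $\vec{\chi}$ in terms of $\alpha$.
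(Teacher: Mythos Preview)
Your proposal is correct and matches the paper's approach exactly: the paper does not give a separate proof of this proposition but derives it in the preceding sentence from the Erd\H{o}s--Moser bound on the largest transitive subtournament together with the pigeonhole inequality $\vec{\chi}(D)\geq n/\alpha(D)$, precisely as you do. The only point to be careful about is the base of the logarithm in the last inequality; the chain $\lfloor 2\log_2 n\rfloor +1 \leq 2\log n +1$ is valid when $\log=\log_2$, which is the convention implicitly used in the paper.
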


%A complementary result is provided by:

%\begin{theorem}[Harutyunyan~\cite{Ha_thesis}]\label{prop_maj_vec_chi}
%  Let $\vec{G}$ be an oriented graph of order $n \geq 3$.
%  Then   $\vec{\chi}(\vec{G}) \leq \frac{n}{\log n}(1+o(1))$.
%  More explicitly, $\vec{\chi}(\vec{G}) \leq \frac{n}{\log n - 2 \log \log n} + \frac{n}{(\log n)^2}
%                 \leq \frac{3n}{\log n}$.
%\end{theorem}

We now turn our attention to acyclic subdigraphs in small digraphs.
A digraph is \textbf{$k$-diregular} if all its vertices have in- and out-degree $k$. The next two results are used to prove Theorem~\ref{thm:S5}. 

\begin{theorem}[Reid and Parker~\cite{tournoi_14} ; Sanchez-Flores~\cite{sanchez1998tournaments}]\label{thm_tournament_14}~ 
\begin{itemize}
\item[(i)] Every tournament of order $14$ has a transitive subtournament of order $5$.

\item[(ii)] There is a unique tournament $ST_{13}$ of order $13$ with no transitive subtournament of order $5$. 
This tournament $ST_{13}$ is $6$-diregular.

\item[(iii)] There is a unique tournament $ST_{12}$ of order $12$ with no transitive subtournament of order $5$. 
This tournament is obtained from $ST_{13}$ by removing any vertex.
\end{itemize}
\end{theorem}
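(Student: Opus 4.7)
The plan is to treat parts~(i)--(iii) together by exploiting the classical fact that every $8$-vertex tournament contains a transitive $4$-subtournament, with unique $7$-vertex exception the Paley tournament $QR_7$ (which is $3$-diregular). The crucial preliminary observation is that in any tournament $T$ without a transitive $5$-subtournament, neither $N^+(v)$ nor $N^-(v)$ can contain a transitive $4$-subtournament (otherwise it would extend with $v$ to a transitive $5$-subtournament), so $d^+(v), d^-(v) \leq 7$ for every vertex~$v$, and any neighborhood of size exactly~$7$ is isomorphic to $QR_7$.

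For~(i), suppose for contradiction that $T$ has $14$ vertices and no transitive $5$-subtournament. Then for every~$v$, $d^+(v) + d^-(v) = 13$ with both at most~$7$, so $\{d^+(v), d^-(v)\} = \{6,7\}$. Since $\sum_v d^+(v) = \binom{14}{2} = 91$, exactly~$7$ vertices have out-degree~$7$ and $7$ have out-degree~$6$, and all size-$7$ neighborhoods are copies of $QR_7$. The contradiction is then extracted by analyzing how the very rigid in- and out-neighborhoods of the seven out-degree-$7$ vertices can be glued consistently; this is the main obstacle, and the step where the original proofs of Reid--Parker and S\'anchez-Flores resort to either heavy hand computation or machine verification.

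For~(ii), the same degree restriction on a $13$-vertex tournament $T$ gives $d^+(v), d^-(v) \leq 7$ summing to $12$. Using the rigidity forced by $QR_7$-neighborhoods together with the classification of $6$-vertex tournaments without a transitive $4$-subtournament, one eliminates the possibility that any vertex has in- or out-degree~$7$, forcing $T$ to be $6$-diregular. Uniqueness of $ST_{13}$ is then obtained by reconstructing $T$ from the two $6$-element neighborhoods of a fixed vertex together with the back-and-forth arcs between them, all of which are pinned down by the no-transitive-$5$ condition.

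For~(iii), given a $12$-vertex transitive-$5$-free tournament $T$, a short argument using the degree restrictions shows that $T$ can be extended to a $13$-vertex transitive-$5$-free tournament by adding a new vertex with suitable in- and out-neighborhoods. By~(ii) this extension is $ST_{13}$, so $T \cong ST_{13}-v$ for some vertex~$v$. Uniqueness of $ST_{12}$ then reduces to showing that $ST_{13}$ is vertex-transitive, which can be read off from an explicit description of $ST_{13}$ (e.g.\ as a Cayley tournament on $\mathbb{Z}/13\mathbb{Z}$) and is already implicit in the reconstruction argument of~(ii), since the $6$-diregularity together with the uniqueness of local structure leaves no room for distinguishing any vertex from any other.
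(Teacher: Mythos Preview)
The paper does not prove this theorem; it is quoted as a known result from Reid--Parker and S\'anchez-Flores and used as a black box later on. So there is no in-paper argument to compare your proposal against.

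As a sketch of how the cited results are obtained, your outline is broadly reasonable: the observation that in a $TT_5$-free tournament every in- and out-neighbourhood is $TT_4$-free, hence of size at most~$7$ with the unique extremal case being $QR_7$, is the standard entry point and is correct. But your proposal is not a proof. In~(i) you explicitly defer the contradiction to ``heavy hand computation or machine verification''; in~(ii) the elimination of degree-$7$ vertices and the reconstruction of $ST_{13}$ are asserted rather than carried out; and in~(iii) the key step --- that any $12$-vertex $TT_5$-free tournament extends to a $13$-vertex one by adding a single vertex --- is claimed to follow from ``a short argument using the degree restrictions'' that you do not supply. That extension step is not obvious: it amounts to finding a bipartition of the $12$ vertices into two $TT_4$-free sets, and nothing in the degree bounds alone guarantees this. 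So as a pointer to the literature your outline is fine, but the actual content of the theorem lies precisely in the case analyses you have left out.
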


\begin{corollary}\label{cor_tournament_13_12}~
\begin{itemize}
    \item[(i)] The only oriented graph of order $13$ with no acyclic induced subdigraph of order $5$ is $ST_{13}$.    
    \item[(ii)] Every oriented graph of order $12$ with no acyclic induced subdigraph of order $5$ satisfies
    $d^+(v),d^-(v) \geq 5$ for every vertex $v$. In particular it has at least $60$ arcs.
\end{itemize}
\end{corollary}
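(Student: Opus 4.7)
Both parts can be deduced from Theorem~\ref{thm_tournament_14} by a tournament-completion argument. The key observation is that if $\vec{G}$ is any oriented graph and $T$ is a tournament on $V(\vec{G})$ obtained by orienting each non-adjacent pair of $\vec{G}$ in some way, then for every vertex set $S$, the subdigraph $\vec{G}[S]$ is a spanning subdigraph of $T[S]$; in particular, if $T[S]$ is transitive (hence acyclic), then $\vec{G}[S]$ is acyclic as well.

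For part (i), let $\vec{G}$ be an oriented graph of order $13$ with no acyclic induced subdigraph of order $5$, and let $T$ be any tournament completion of $\vec{G}$. By the observation, $T$ has no transitive subtournament of order $5$, so Theorem~\ref{thm_tournament_14}(ii) yields $T = ST_{13}$; in particular, $\vec{G}$ is a spanning subdigraph of $ST_{13}$. Suppose for contradiction that some arc $uv$ of $ST_{13}$ is missing from $\vec{G}$; since every arc of $\vec{G}$ is already an arc of $ST_{13}$, the vertices $u$ and $v$ are non-adjacent in $\vec{G}$. Let $T'$ be the tournament obtained from $ST_{13}$ by reversing the arc $uv$ into $vu$. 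Then $T'$ is another tournament completion of $\vec{G}$, and $T' \neq ST_{13}$, so Theorem~\ref{thm_tournament_14}(ii) provides a $5$-vertex set $S$ with $T'[S]$ transitive. Because $T'$ agrees with $ST_{13}$ outside the pair $\{u,v\}$, if $\{u,v\} \not\subseteq S$ then $T'[S] = ST_{13}[S]$, contradicting the uniqueness in Theorem~\ref{thm_tournament_14}(ii). Hence $\{u,v\} \subseteq S$, and $\vec{G}[S]$ is a subdigraph of the acyclic tournament $T'[S]$, providing an acyclic induced subdigraph of order $5$ in $\vec{G}$, a contradiction. Therefore $\vec{G} = ST_{13}$.

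For part (ii), the exact same argument, but with Theorem~\ref{thm_tournament_14}(iii) in place of~(ii), shows that every oriented graph of order $12$ with no acyclic induced subdigraph of order $5$ must equal $ST_{12}$. The degree statement then follows from the structure of $ST_{12}$: by Theorem~\ref{thm_tournament_14}(iii), $ST_{12}$ is obtained from $ST_{13}$ by removing some vertex $x$, and since $ST_{13}$ is $6$-diregular by Theorem~\ref{thm_tournament_14}(ii), each of the $6$ out-neighbors of $x$ in $ST_{13}$ keeps out-degree $6$ while losing one in-arc, and each of the $6$ in-neighbors keeps in-degree $6$ while losing one out-arc. Consequently, every vertex of $ST_{12}$ satisfies $d^+(v), d^-(v) \geq 5$, and summing out-degrees yields at least $12 \cdot 5 = 60$ arcs.

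No substantial obstacle is anticipated: the entire argument is driven by the uniqueness statements in Theorem~\ref{thm_tournament_14}, together with the trivial verification that reversing a single non-adjacent arc still produces a valid tournament completion of $\vec{G}$.
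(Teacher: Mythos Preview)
Your argument for (i) is correct and close in spirit to the paper's, both being tournament-completion arguments. One small point you gloss over: when you write ``$T' \neq ST_{13}$'' you really need $T' \not\cong ST_{13}$ to invoke the uniqueness; this holds because $ST_{13}$ is $6$-diregular while reversing an arc produces a vertex of out-degree~$5$. (Also, the detour showing $\{u,v\} \subseteq S$ is unnecessary: since $T'$ is itself a completion of $\vec{G}$, your key observation already gives that $\vec{G}[S]$ is acyclic.) The paper's variant is to complete $\vec{G}$ directly to a tournament in which one chosen vertex has in-degree $\neq 6$, forcing $T \not\cong ST_{13}$ in one step.

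For (ii), however, ``the exact same argument'' does not go through, and this is a genuine gap. The crucial step is again $T' \not\cong ST_{12}$, but $ST_{12}$ is \emph{not} diregular: by your own degree computation its vertices have $(d^+,d^-)\in\{(6,5),(5,6)\}$. Reversing an arc $uv$ with $d^+(u)=6$ and $d^-(v)=6$ preserves the degree sequence, and you give no reason why the resulting tournament is not isomorphic to $ST_{12}$. (Such arcs do exist in $ST_{12}$: they correspond precisely to directed triangles through the deleted vertex of $ST_{13}$, and a $6$-diregular tournament cannot have all arcs from $N^-(x)$ to $N^+(x)$.) Thus your stronger claim ``$\vec{G}=ST_{12}$'' is not established, and the degree bounds you derive rest on it. The paper proves only what is actually stated, via a targeted completion: if some vertex $v$ has $d^+_{\vec{G}}(v)\le 4$, complete $\vec{G}$ to a tournament $T$ \emph{without adding any out-arc at $v$}; then $d^+_T(v)\le 4$, so $T\not\cong ST_{12}$, and a transitive $5$-set in $T$ yields an acyclic $5$-set in $\vec{G}$. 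This avoids the isomorphism issue entirely.
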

\begin{proof}
(i) Let $\vec{G}$ be an oriented graph of order $13$ distinct from $ST_{13}$.
If $\vec{G}$ is a tournament, then we have the result by Theorem~\ref{thm_tournament_14}~(ii).
Assume now that $\vec{G}$ is not a tournament.Then there exist non-adjacent vertices $x,y$ in $\vec{G}$.
We add arcs to $\vec{G}$ to obtain a tournament $T$ in which $x$ does not have in-degree $6$.
By Theorem~\ref{thm_tournament_14}~(ii), $T$ has a transitive subtournament of order $5$, and thus
$\vec{G}$ has an acyclic induced subdigraph of order $5$.

\medskip

(ii) Let $\vec{G}$ be an oriented graph of order $12$. 
Assume that there is a vertex $v$ with out-degree at most $4$ in $\vec{G}$. 
We add arcs to $\vec{G}$ to obtain a tournament  $T$ in which $x$ has the same out-degree as in $\vec{G}$.
Now $T\neq ST_{12}$ because every of $ST_{12}$ has out-degree $5$ and $6$.
Hence, by Theorem~\ref{thm_tournament_14}~(ii), $T$ has a transitive subtournament of order $5$, and thus
$\vec{G}$ has an acyclic induced subdigraph of order~$5$.

Similarly, if  there is a vertex $v$ with out-degree at most $4$, then $\vec{G}$ has an acyclic induced subdigraph of order $5$.
\end{proof}

\subsection{Small oriented graphs of dichromatic number $3$ or $4$}
%%%%%%%%%%%%%%%%%%%%%%%%%%%%%%%%%%%%

We describe the $3$- and $4$-dichromatic oriented graphs with a given number of vertices. These results are used in the proof of Theorem~\ref{thm:N1}, Theorem~\ref{thm:N3} and Theorem~\ref{thm:S5}. 
%Most of these questions have been considered earlier for tournaments. 

\begin{proposition}\label{prop_3crit_edges}

    \begin{itemize}
    \item[(i)] All oriented graphs on at most $6$ vertices are $2$-dicolourable.
        \item[(ii)] The unique smallest $3$-dicritical oriented graph on $7$ vertices has $20$ arcs. %In particular all oriented graph on $6$ vertices are $2$-dicolourable;
        \item[(iii)] The unique smallest $3$-dicritical oriented graph on $8$ vertices has $21$ arcs.
        \item[(iv)] The unique smallest $3$-dicritical oriented graph on $9$ vertices has $23$ arcs.
        \item [(v)] Every $3$-dicritical oriented graph on at least $10$ vertices has at least $21$ arcs.
    \end{itemize}
\end{proposition}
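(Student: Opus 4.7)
The plan is to use two structural tools throughout: the minimum-degree bound of Proposition~\ref{prop_crit_deg_digraphe} (every vertex of a 3-dicritical digraph has $d^+, d^- \geq 2$) and the Gallai-type Theorem~\ref{thm:gallai_or} (the vertices of in- and out-degree exactly $2$ induce a directed cactus). These already handle part~(v) by a short global argument; parts (i)--(iv) then need a finer quantitative refinement followed by the inspection of a short list of remaining candidate small oriented graphs.

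I would first dispose of (v). Let $\vec G$ be a 3-dicritical oriented graph with $n \geq 10$ vertices and set $T = \{v : d^+(v) = d^-(v) = 2\}$. If $T = V(\vec G)$, then Theorem~\ref{thm:gallai_or} forces $\vec G$ to itself be a directed cactus, whose blocks are arcs or directed cycles and hence have dichromatic number at most $2$, giving $\vec\chi(\vec G) \leq 2$, a contradiction. Consequently some vertex satisfies $d^+ + d^- \geq 5$, so
\[
2m(\vec G) \;=\; \sum_{v}\bigl(d^+(v)+d^-(v)\bigr) \;\geq\; 4(n-1) + 5 \;=\; 4n+1,
\]
yielding $m(\vec G) \geq 2n+1 \geq 21$ whenever $n \geq 10$.

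For (i) the same inequality gives $m(\vec G) \geq 2n+1$, which already exceeds $\binom{n}{2}$ for $n \leq 5$. For $n = 6$ it only says $m \geq 13$, but then $\vec G$ is a spanning subdigraph of some tournament on $6$ vertices; since every such tournament is $2$-dicolourable (a classical fact implying that every oriented graph on at most $6$ vertices is $2$-dicolourable), so is $\vec G$, contradicting $3$-dicriticality. For (ii)--(iv) one refines the global counting: setting $t = |T|$, Lemma~\ref{fait_t} bounds the arcs inside $T$ by $\tfrac{3}{2}(t-1)$, and the arcs outside or across $T$ are controlled by the sharper inequality $d^+(v)+d^-(v) \geq 5$ for $v \notin T$. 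Combining these narrows the feasible triples $(n,m,t)$ for $n \in \{7,8,9\}$ to a short list, after which the extremal $3$-dicritical oriented graphs are exhibited explicitly (with $20$, $21$, and $23$ arcs respectively) and uniqueness is verified by a direct enumeration of the few remaining candidates, conveniently carried out by computer.

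The main obstacle is exactly this final enumeration: the structural inequalities reduce the search space dramatically but do not, on their own, pinpoint a unique extremal example for each of $n \in \{7,8,9\}$. Establishing both the precise minimum arc count and the uniqueness of the extremal example therefore requires a careful, in practice computer-assisted, case analysis of the remaining candidate oriented graphs on these orders.
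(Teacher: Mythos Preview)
Your approach is essentially the same as the paper's. For (v) you spell out exactly the argument the paper sketches in one sentence (minimum in/out-degree $2$ plus Theorem~\ref{thm:gallai_or} ruling out $2$-diregularity, hence $2m\ge 4n+1$); for (i) you recover the same classical fact the paper cites from~\cite{NL94}; and for (ii)--(iv) both you and the paper reduce to a computer enumeration, the only difference being that the paper filters candidates via minimum degree $4$ and arboricity $\ge 3$ using \textit{nauty}, whereas you propose to prune further with Lemma~\ref{fait_t} before enumerating---a cosmetic difference in how the search space is cut down, not a different proof strategy.
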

\begin{proof}
(i) was shown in~\cite{NL94}. 

We verified (ii), (iii), and (iv) by exhaustive computation. By Proposition~\ref{prop_crit_deg_digraphe}, $3$-dicritical  oriented graphs have minimum in- and out-degree at least $2$ and thus their underlying graph have minimum degree $4$. 
We used the program \textit{nauty}~\cite{McKay201494} to generate all
 (non-oriented) graphs of order $7$, $8$ and $9$ of minimum degree at least $4$.
From this list, we extracted the graphs of arboricity at least $3$
and generated the orientations of minimum in- and out-degree at least $2$
using McKay's program \textit{nauty}~\cite{McKay201494}.
Finally, we kept only the orientations that were $3$-dicritical.
The code can be found at~\url{https://github.com/ClementRambaud/cdicoloring}.

 %\PA{Mettre un lien vers un github avec tous les codes}

To prove \textit{(v)}, observe that $3$-dicritical oriented graphs have in- and out-degree at least $2$ (Proposition~\ref{prop_crit_deg_digraphe}) and  that Theorem~\ref{thm:gallai_or} implies that a $3$-dicritical oriented graph cannot be $2$-diregular. 
\end{proof}

%As every $3$-dicritical oriented graph of order at least $10$ has at least 
%    $\frac{1}{2} \times 10 \times 4$ arcs by Property~\ref{prop_crit_deg_digraphe}),
%we have the following corollary.
%
%
%\begin{corollary}~\label{cor_3crit_edges}
%Every $3$-dicritical oriented graph has at least 20 arcs.
%\end{corollary}

%\CR{Il me semble qu'on a besoin dans la demo pour $\mathbb{S}_5$ du cas d'égalité,
%donc je l'ai ajoute (peut-être faut-il plus détailler?).}
%Moreover, Theorem~\ref{thm_gallai} shows that any $3$-dicritical
%oriented graph $\vec{G}$ cannot be $2$-diregular,
%hence if $n(\vec{G}) \geq 10$, then $m(\vec{G}) \geq 21$.
%We deduce that
%there exists exactly one $3$-critical oriented graph with
%$20$ arcs and its order is $7$.

\begin{theorem}[Neumann Lara~\cite{NL94}]\label{thm_nl_clique}
%used for N_3
All oriented graphs on at most $10$ vertices are $3$-dicolourable. The unique smallest oriented graph with dichromatic number $4$ on $11$ vertices has $55$ edges. It is depicted in Figure~\ref{fig_T11}.
\end{theorem}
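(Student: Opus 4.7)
The plan is to reduce everything to the study of tournaments of order at most $11$. Every oriented graph $H$ on $n$ vertices is a subdigraph of some tournament $T$ on $n$ vertices, obtained by orienting each non-edge of $H$ arbitrarily; since $\vec{\chi}$ is monotone under taking subdigraphs we have $\vec{\chi}(H) \leq \vec{\chi}(T)$. Hence all $n$-vertex oriented graphs are $3$-dicolourable if and only if all $n$-vertex tournaments are, and any $11$-vertex oriented graph of dichromatic number $4$ is a subdigraph of some $11$-vertex tournament of dichromatic number at least $4$.

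First I would verify by exhaustive computer search, in the spirit of Proposition~\ref{prop_3crit_edges}, that every tournament on at most $10$ vertices is $3$-dicolourable. The non-isomorphic tournaments on $n\leq 10$ vertices can be generated with \texttt{nauty}, and for each one $\vec{\chi}$ can be computed by a SAT-based or branch-and-bound routine. The search can be pruned using Proposition~\ref{prop_crit_deg_digraphe}, which forces $\delta^+, \delta^- \geq 3$ in any $4$-dicritical subgraph, and Theorem~\ref{thm:gallai_or}, which forces the vertices of in- and out-degree $3$ to induce a directed cactus; together these imply that candidates contain many high-degree vertices and have a very constrained skeleton.

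Next I would classify the $11$-vertex tournaments with $\vec{\chi} = 4$. Brute enumeration of all $11$-vertex tournaments up to isomorphism is infeasible, but the constraints above, combined with the fact that an $11$-vertex tournament has average in-degree $5$, restrict the search space drastically: any $4$-dicritical $11$-vertex tournament is nearly $5$-diregular, so one can start from the near-diregular candidates. The outcome should be that a single tournament $T_{11}$, which is (as established by Neumann Lara) the Paley tournament $QR_{11}$, is, up to isomorphism, the unique $11$-vertex tournament with dichromatic number $4$. I would then verify that $T_{11}$ is $4$-dicritical by checking that the removal of any arc yields dichromatic number $\leq 3$.

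To conclude, suppose $H$ is an $11$-vertex oriented graph with $\vec{\chi}(H) = 4$ and strictly fewer than $55 = \binom{11}{2}$ arcs. Then $H$ is not a tournament, so it extends to an $11$-vertex tournament $T$ of dichromatic number at least $4$; by the uniqueness, $T = T_{11}$, and $H$ is a proper subdigraph of $T_{11}$, contradicting $4$-dicriticality. Hence the minimum number of arcs is exactly $55$, attained by $T_{11}$, which is a tournament; by the uniqueness of such a tournament, $T_{11}$ is the unique minimiser. The main obstacle is the $11$-vertex step: the enumeration space is too large for straight brute force, so one must either mirror Neumann Lara's structural argument tailored to $QR_{11}$ or combine the dicritical constraints with a carefully pruned computer search.
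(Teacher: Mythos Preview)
Your proposal is correct and follows the same logical skeleton as the paper: reduce to tournaments by monotonicity of $\vec{\chi}$, take Neumann-Lara's results (every $10$-vertex tournament is $3$-dicolourable; $ST_{11}$ is the unique $11$-vertex tournament with $\vec{\chi}=4$) as input, and then show that $ST_{11}$ is $4$-dicritical so that no proper subdigraph---hence no $11$-vertex oriented graph with fewer than $55$ arcs---can have dichromatic number $4$. The paper simply \emph{cites}~\cite{NL94} for the two hard inputs rather than re-verifying them by computer as you propose, and for the dicriticality step it exploits that $ST_{11}$ is arc-transitive, so it suffices to exhibit a $3$-dicolouring after deleting a single arc (it removes $(4,2)$, observes that $\{0,1,2,4,5\}$ becomes acyclic, and $2$-dicolours the remaining $6$ vertices via Proposition~\ref{prop_3crit_edges}(i)); this replaces your ``check all $55$ arc removals'' by a one-line argument.
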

\begin{proof}
In~\cite{NL94}, Neumann-Lara proved that every tournament (and so oriented graph) of order $10$ is $3$-dicolourable, and that there is a unique tournament $ST_{11}$ with order $11$ and dichromatic number $4$.
It is depicted in Figure~\ref{fig_T11}. Let us show that this tournament is dicritical, which implies the result. 

Since $ST_{11}$ is arc-transitive (for any two arcs $e$ and $e'$ there is an automorphism sending $e$ onto $e'$), it suffices to show an arc $e$ such that $T\setminus e$ is $3$-dicolourable.
Let us use the vertex numbering of Figure~\ref{fig_T11}.
Observe that in $T\setminus (4,2)$, the set $S=\{0,1,2,4,5\}$ induces an acyclic subdigraph of order $5$.
Now $ST_{11}-S$ has $6$ vertices and is $2$-dicolourable by \textit{(i)}. Hence $T\setminus (4,2)$ is $3$-dicolourable and $T$ is $4$-dicritical.
\end{proof}

\begin{figure}[ht]
\centering
\begin{tikzpicture}[scale=5, rotate=90]
  \GraphInit[vstyle=Hasse]
  \SetVertexLabel
  \SetUpEdge[style={->}]
  \SetGraphUnit{0.75}
  \Vertices{circle}{0,1,2,3,4,5,6,7,8,9,10}
  \foreach \v in {0,1,2,3,4,5,6,7,8,9,10}{
      \foreach \d in {1,3,4,5,9}{
          \pgfmathtruncatemacro{\w}{mod(round(\v+\d),11)}
          \Edges (\v, \w));
      }
  }
\end{tikzpicture}
\caption{\label{fig_T11} The tournament $ST_{11}$.}
\end{figure}
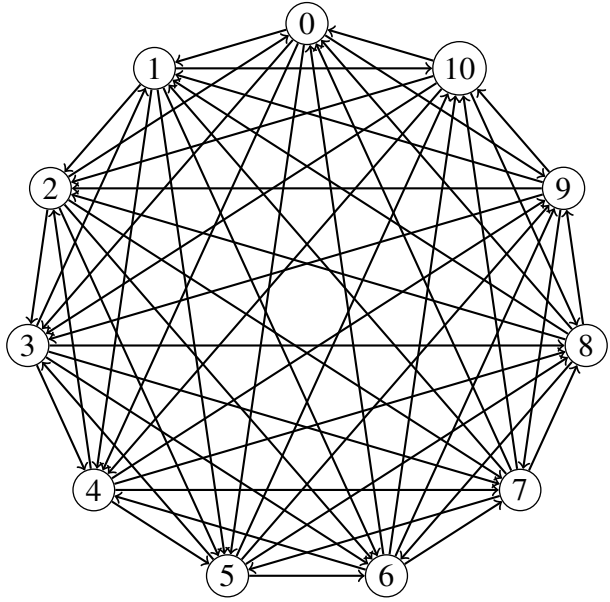

In~\cite{NL94} it is stated without proof that all oriented graphs on at most $16$ vertices are $4$-dicolourable. 
Here, we prove a weaker statement that suffices for our purposes.

\begin{proposition}\label{prop_n_leq15}
All oriented graphs on at most $15$ vertices are $4$-dicolourable. 
\end{proposition}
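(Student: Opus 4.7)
The plan is to use Theorem~\ref{thm_nl_clique}, which says that oriented graphs on at most $10$ vertices are $3$-dicolourable, together with a reduction: for an oriented graph $\vec{G}$ on $n$ vertices with $11 \leq n \leq 15$, I will exhibit an induced acyclic subdigraph $S$ of $\vec{G}$ with $|V(S)| \geq n-10$. Then $\vec{G} - V(S)$ has at most $10$ vertices and so is $3$-dicolourable with colours $\{1,2,3\}$ by Theorem~\ref{thm_nl_clique}; assigning the fourth colour to every vertex of $V(S)$ extends this to a proper $4$-dicolouring of $\vec{G}$, since each of the first three colour classes is a subdigraph of $\vec{G}-V(S)$ and the fourth, $V(S)$, is acyclic by construction.

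It remains to produce $S$ of the required size in each range of $n$. For $n \leq 10$ the claim is immediate from Theorem~\ref{thm_nl_clique}. For $n \in \{11,12\}$, $S$ must contain only $1$ or $2$ vertices, and any such subset of an oriented graph induces an acyclic subdigraph (there are no digons, so any two vertices span at most one arc). For $n=13$, I need $|V(S)|=3$: arbitrarily orient the non-adjacent pairs of $\vec{G}$ to obtain a tournament, apply Lemma~\ref{lem_acyclique_log} to get a transitive subtournament of order $\lfloor \log_2 13\rfloor +1 = 4$, and restrict back to $\vec{G}$; removing arcs preserves acyclicity, so the corresponding induced subdigraph of $\vec{G}$ is acyclic and has order $4 \geq 3$.

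The main cases are $n=14$ and $n=15$, where $|V(S)|$ must be at least $5$. For $n=14$ I will apply the same tournament-completion trick, this time calling on Theorem~\ref{thm_tournament_14}(i): orienting the non-adjacent pairs of $\vec{G}$ arbitrarily yields a tournament $T$ on $14$ vertices, which by Theorem~\ref{thm_tournament_14}(i) contains a transitive subtournament $T'$ of order $5$; then $\vec{G}[V(T')]$ is acyclic because it is obtained from the (acyclic) transitive tournament $T'$ by deleting arcs. For $n=15$ the same argument applied to any induced $14$-vertex subdigraph of $\vec{G}$ produces an induced acyclic subdigraph of order $5$ in $\vec{G}$.

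I do not expect a serious obstacle: every step above either invokes a theorem already stated in the paper or uses the basic observation that acyclicity is preserved under arc deletion. The only point that warrants care is making sure the colour-extension argument really produces a $4$-dicolouring, but this is exactly as described in the first paragraph: the subdigraph induced by each of the colours $1,2,3$ lives inside $\vec{G} - V(S)$ and is acyclic by the choice of the $3$-dicolouring, while the colour class $4$ is the acyclic set $V(S)$.
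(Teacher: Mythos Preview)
Your proof is correct and follows essentially the same approach as the paper: find an induced acyclic set $S$ with $|S|\geq n-10$, then $3$-dicolour $\vec{G}-S$ via Theorem~\ref{thm_nl_clique} and assign colour $4$ to $S$. The only cosmetic differences are in the case split---the paper handles $n\leq 12$ by partitioning into two $6$-sets and $2$-dicolouring each (via Proposition~\ref{prop_3crit_edges}) and uses Lemma~\ref{lem_acyclique_log} for $n=14$, whereas you use the trivial acyclic sets for $n\in\{11,12\}$ and invoke Theorem~\ref{thm_tournament_14}(i) already at $n=14$; your version is marginally more streamlined.
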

\begin{proof}
Let $\vec{G}$ be an oriented graph of order $n \leq 15$.

Assume first $n=15$.
According to Theorem~\ref{thm_tournament_14} \textit{(i)}, $\vec{G}$ contains an acyclic subdigraph $S$  of order
$5$. Now $\vec{G}-S$ has 10 vertices, and so is $3$-dicolourable by Theorem~\ref{thm_nl_clique}.
Thus  $\vec{G}$ is $4$-dicolourable.

Assume now that $13 \leq n \leq 14$. Then $\vec{G}$ contains an acyclic subdigraph $S$ of order $\lfloor \log_2 (13) \rfloor + 1=4$ by Lemma~\ref{lem_acyclique_log}. 
Now $\vec{G}-S$ has 10 vertices, and so is $3$-dicolourable by Theorem~\ref{thm_nl_clique}.
Thus  $\vec{G}$ is $4$-dicolourable.

Assume now that $n \leq 12$. Let $(A,B)$ be a partition of $\vec{G}$ with $|A|=|B|=6$.
By Proposition~\ref{prop_3crit_edges}, each of $\vec{G}\langle A\rangle$ and  $\vec{G}\langle A\rangle$ is $2$-dicolourable. Hence,
$\vec{G}$ is  $4$-dicolourable.
\end{proof}

\section{Dichromatic number of surfaces}\label{sec:results}
%%%%%%%%%%%%%%%%%%%%%%%%%%%%%%

\subsection{General bounds} \label{sec:generalbound}
%%%%%%%%%

We first determine the asymptotic behaviour of the dichromatic number of a surface of given Euler characteristic. The upper bound was pointed out to us by Raphael Steiner.

\begin{theorem}\label{thm:bnd-gen}
There exist two constants $a_1$ and $a_2$ such that, for every surface $\Sigma$ with Euler characteristic $c\leq -2$, we have
  \[
   a_1\frac{\sqrt{-c}}{\log(-c)} 
  \leq \vec{\chi}(\Sigma) \leq
  a_2  \frac{\sqrt{-c}}{\log(-c)}
  \]
\end{theorem}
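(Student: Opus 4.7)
My plan is to prove the lower and upper bounds separately, as they are essentially independent, and it is the upper bound that I expect to contain the real work.

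For the lower bound, I would combine Ringel and Youngs (Theorem~\ref{thm:plongement-complet}) with the Erd\H{o}s--Moser tournament construction (Proposition~\ref{prop:exist-tournament}). Concretely, set $n := H(c) = \lfloor (7+\sqrt{49-24c})/2 \rfloor$, which is $\Theta(\sqrt{-c})$; the hypothesis $c \leq -2$ excludes the Klein bottle (the unique exception in Theorem~\ref{thm:plongement-complet}), so $K_n$ embeds on $\Sigma$. By Proposition~\ref{prop:exist-tournament} there exists a tournament $T$ on $n$ vertices with $\vec{\chi}(T) \geq n/(2\log_2 n + 1)$, and because the underlying graph of $T$ is $K_n$, $T$ itself embeds on $\Sigma$. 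Therefore $\vec{\chi}(\Sigma) \geq \vec{\chi}(T) = \Omega(\sqrt{-c}/\log(-c))$, which yields the desired constant $a_1$.

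For the upper bound, take an oriented graph $\vec{G}$ embeddable on $\Sigma$ with $\vec{\chi}(\vec{G}) = k$; we may assume $k \geq 5$, as smaller values are absorbed by the constant $a_2$. Extract a $k$-dicritical subdigraph $\vec{H} \subseteq \vec{G}$, which inherits the embedding. By Proposition~\ref{prop_5crit}, $n(\vec{H}) \leq -3c/(k-4)$. The idea is to combine this with an upper bound of the form ``any oriented graph on $N$ vertices has dichromatic number at most $O(N/\log N)$'', obtained by iterating Lemma~\ref{lem_acyclique_log}: after completing $\vec{H}$ to a tournament on $V(\vec{H})$, any transitive subtournament remains acyclic in $\vec{H}$, so one may repeatedly peel off an acyclic induced subdigraph of size $\geq \log_2 N + 1$, each forming one colour class. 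Plugging the two bounds together gives an inequality in $k$ alone, namely $k(k-4) \log(-3c/(k-4)) = O(-c)$.

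The main obstacle is that this direct implementation yields only $k = O(\sqrt{-c/\log(-c)})$, which is larger than the target $O(\sqrt{-c}/\log(-c))$ by a factor of $\sqrt{\log(-c)}$. Closing this gap requires a sharper dichromatic-number bound for $\vec{H}$ than iterated Stearns provides. The approach I would follow, in line with the attribution to Steiner in the paper, is to invoke a Harutyunyan--Mohar-type estimate $\vec{\chi}(\vec{H}) \leq (1+o(1))\Delta(\vec{H})/\log\Delta(\vec{H})$, or more structurally a recent bound on the dichromatic number of oriented graphs whose underlying graph is $K_t$-minor-free, applied with $t = \Theta(\sqrt{-c})$ (which holds for any graph embedded on $\Sigma$ since $K_t$ has Euler genus $\Omega(t^2)$). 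Solving the resulting sharper inequality together with the size bound from Proposition~\ref{prop_5crit} then yields $k = O(\sqrt{-c}/\log(-c))$, matching the lower bound asymptotically.
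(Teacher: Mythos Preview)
Your lower bound argument is correct and matches the paper's proof exactly.

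For the upper bound, you correctly diagnose that iterated Stearns only yields $k = O(\sqrt{-c/\log(-c)})$, a factor $\sqrt{\log(-c)}$ too large. However, neither of your proposed remedies closes this gap. The Harutyunyan--Mohar estimate $\vec{\chi}(\vec{H}) \leq O(\Delta/\log\Delta)$ buys nothing beyond iterated Stearns here: it forces $\Delta \geq \Omega(k\log k)$, but since the only usable upper bound on $\Delta$ is $\Delta \leq n-1 \leq -3c/(k-4)$, you again land on $k^{2}\log k = O(-c)$ and hence $k = O(\sqrt{-c/\log(-c)})$. The $K_t$-minor-free route would require a bound $\vec{\chi} = O(t/\log t)$ for oriented graphs whose underlying graph excludes $K_t$ as a minor; no such result is cited in the paper, and you do not point to one either.

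The paper's actual argument (the one attributed to Steiner) rests on two ingredients absent from your plan. First, it invokes a result of Erd\H{o}s, Gimbel and Kratsch~\cite{EGK91}: any digraph with dichromatic number $k$ has at least $b_3\,k^{2}\log^{2} k$ arcs. Combining this with $n(\vec H) \leq O(-c)$ (Proposition~\ref{prop_5crit} and Theorem~\ref{thm:4crit}) and Euler's Formula gives $m(\vec H)\leq O(-c)$, whence $k\log k \leq O(\sqrt{-c})$, that is $k \leq b_4\sqrt{-c}/\log k$. Second --- and this is what actually finishes the proof --- the paper \emph{feeds the already-proved lower bound back in}: from $k \geq a_1\sqrt{-c}/\log(-c)$ one deduces $\log k \geq b_5\log(-c)$, and substituting yields $k \leq a_2\sqrt{-c}/\log(-c)$. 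Without this bootstrap, the inequality $k\log k \leq O(\sqrt{-c})$ alone does not pin $k$ down to the claimed order, so even had one of your alternative tools delivered that inequality, this last step would still be missing.
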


\begin{proof}

Let us first establish the lower bound.
By Proposition~\ref{prop:exist-tournament}, there exists a tournament $T$ of order $H(c)$ such that 
$\dic(T) \geq \frac{H(c)}{2\log H(c)+1}$.
But, by Theorem~\ref{thm:plongement-complet}, this tournament is embeddable on 
$\Sigma$.
So $\dic(\Sigma) \geq \dic(T) \geq \frac{H(c)}{2\log H(c)+1}$.
Since $H(c) = \left\lfloor \frac{7 + \sqrt{49-24c}}{2} \right\rfloor$, we get the lower bound.

\bigskip

% Let us now prove the upper bound.
% As  $\sqrt{\frac{-c}{\log(-c)}}$ tends to $+\infty$ when $c$ tends to $-\infty$,  it suffices to prove that there is a constant $b_2$ such that $\vec{\chi}(\Sigma) \leq
%  4 +  b_2  \sqrt{\frac{-c}{\log(-c)}}$ when $-c$ is large enough.

To see the upper bound, let $\Sigma$ be a surface of Euler characteristic $c$ and dichromatic number $k\geq 4$. (We will choose the constant $a_2$ large enough to not care about smaller values of $k$.) Let $\vec{G}$ be a $k$-dicritical oriented graph with $n$ vertices and $m$ arcs embedded in $\Sigma$.

By Proposition~\ref{prop_5crit} and Theorem~\ref{thm:4crit} there is a constant $b_1$ such that $n\leq -b_1c$. Thus, Euler's Formula implies that there is a constant $b_2$ such that $m\leq -b_2c$. Now, by a result of~\cite{EGK91}, there is a constant $b_3$ such that $m\geq b_3k^2\log^2(k)$. This yields $b_4$ such that, $k\leq b_4  \frac{\sqrt{-c}}{\log(k)}$. Applying the lower bound $a_1\frac{\sqrt{-c}}{\log(-c)} 
  \leq k$, we get that there is a $b_5$ such that $\log(k)\geq b_5\log(-c)$. This yields the upper bound.
% $k \leq 4 - \frac{3c}{n}$ and $k \leq \frac{3n}{\log n}$.
% We deduce $k \leq 4 - \frac{9c}{k \log n}$ and thus
% $(k-4)^2 \leq k(k-4) \leq - \frac{9c}{\log n} \leq - \frac{9c}{\log k}$ because $k \leq n$.
% Hence $(k-4)^2 \leq - \frac{9c}{\log k}$.
% If  $k \leq \sqrt{\frac{-c}{\log(-c)}}$ we have the result.
% If not, then $(k-4)^2 \leq -\frac{9c}{\log\left( \sqrt{\frac{-c}{\log(-c)}}  \right)}$.
% But $\log \left( \sqrt{\frac{-c}{\log(-c)}} \right) = \frac{1}{2} \log(-c) - \frac{1}{2} \log(\log(-c)) \geq \frac{1}{4} \log(-c)$ because $-c$ is large enough.
% Hence  $(k-4)^2  \leq  \frac{-36c}{\log(-c)}$, so $k \leq  4+  \sqrt{\frac{-36c}{\log(-c)}}$.
% 
% 
% 
% \bigskip
% 
\end{proof}

\subsection{Projective plane, torus, Klein bottle, and Dyck's surface}
%%%%%%%%%%%%%%%%%%%%%%%%%%%%%%%%%%%%%%%%%%%

We begin with a lower bound.

\begin{theorem}\label{thm:N1}
If $\Sigma$ is a surface of Euler characteristic at most $1$, then
$\vec{\chi}(\Sigma) \geq 3$.
\end{theorem}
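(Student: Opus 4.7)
My plan is to produce, for every surface $\Sigma$ with $c(\Sigma)\leq 1$, an oriented graph with dichromatic number at least $3$ that embeds on $\Sigma$. Since every surface with $c(\Sigma)\leq 1$ is either $\mathbb{N}_k$ for some $k\geq 1$ or $\mathbb{S}_k$ for some $k\geq 1$, and since every oriented graph embeddable on $\mathbb{N}_1$ (resp.\ $\mathbb{S}_1$) is embeddable on $\mathbb{N}_k$ (resp.\ $\mathbb{S}_k$) for every $k\geq 1$ by attaching cross-caps (resp.\ handles), it is enough to exhibit one witness on $\mathbb{N}_1$ and one on $\mathbb{S}_1$.

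For the torus I would use the Paley tournament $PT_7$, the arc-transitive orientation of $K_7$ with $\vec{\chi}(PT_7)=3$ from Neumann-Lara's original paper (compatible with Proposition~\ref{prop_3crit_edges}, since $PT_7$ is a $3$-dichromatic oriented graph on $7$ vertices with $21$ arcs). By the Ringel--Youngs theorem (Theorem~\ref{thm:plongement-complet}), since $H(0)=7$ and $\mathbb{S}_1\neq\mathbb{N}_2$, the complete graph $K_7$ embeds on $\mathbb{S}_1$; hence so does $PT_7$, which gives $\vec{\chi}(\mathbb{S}_1)\geq 3$ immediately and covers all $\mathbb{S}_k$ with $k\geq 1$.

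For the projective plane I would exhibit an explicit $3$-dichromatic oriented graph embedding in $\mathbb{N}_1$. The most economical candidate is the unique smallest $3$-dicritical oriented graph on $8$ vertices from Proposition~\ref{prop_3crit_edges}(iii), which has exactly $21=3\cdot 8-3$ arcs and therefore saturates the triangulation bound of Theorem~\ref{thm:euler-gen} for $c=1$. I would write this graph down (by listing its arcs), and certify an embedding on $\mathbb{N}_1$ by giving a combinatorial rotation system with signature whose face walks are all triangles. If that $8$-vertex graph turns out to have non-orientable genus larger than $1$, I would fall back on the $9$-vertex $3$-dicritical example with $23$ arcs (slack $1$ against $3n-3$), or on a $3$-dicritical oriented graph with at least $10$ vertices and at least $21$ arcs (Proposition~\ref{prop_3crit_edges}(v)), for which the $3n-3$ bound is far from tight and a direct projective embedding is easier to produce.

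The delicate step is this last one: certifying that some small $3$-dicritical oriented graph actually has non-orientable genus at most $1$, and exhibiting the corresponding embedding. The torus case, by contrast, is essentially free once $PT_7$ is combined with the Ringel--Youngs embedding of $K_7$; everything nontrivial in the proof sits in the explicit construction, and verification, of a projective-planar witness.
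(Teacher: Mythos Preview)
Your torus argument is correct and essentially matches the paper's: both use that $K_7$ embeds on $\mathbb{S}_1$ (via Theorem~\ref{thm:plongement-complet}) and carries an orientation with dichromatic number $3$. The reduction to $\mathbb{N}_1$ and $\mathbb{S}_1$ via the Classification Theorem is also the same.

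For the projective plane, however, there is a genuine gap. You propose to take one of the small $3$-dicritical oriented graphs from Proposition~\ref{prop_3crit_edges} and certify that its underlying graph embeds on $\mathbb{N}_1$, but you do not carry this out, and you yourself flag it as ``the delicate step''. This is not a mere formality: the $7$-vertex example has underlying graph $K_7\setminus e$ with $20>3\cdot 7-3$ edges, so it does \emph{not} embed on $\mathbb{N}_1$; the $8$-vertex example has exactly $21=3\cdot 8-3$ edges, so it would have to triangulate $\mathbb{N}_1$, a rigid constraint on a specific $8$-vertex graph that you have not checked; and your fallbacks are equally unverified. Without an explicit rotation system or some other certificate, the $\mathbb{N}_1$ case is simply not proved, and nothing in the paper guarantees that any of these particular dicritical graphs is projective-planar.

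The paper sidesteps this difficulty by reversing the logic. Instead of looking for a $3$-dichromatic oriented graph that happens to embed on $\mathbb{N}_1$, it starts from the classical triangulation of $\mathbb{N}_1$ by $K_6$, orients it, and then inserts small planar gadgets inside certain faces. Because the gadgets live inside faces of an existing embedding, the resulting oriented graph embeds on $\mathbb{N}_1$ \emph{by construction}; the remaining work is a short colouring argument showing that no $2$-dicolouring exists (the gadget forbids its boundary triangle from being monochromatic, while any $2$-dicolouring of the oriented $K_6$ must leave some such triangle monochromatic). That is the idea your proposal is missing: secure the embedding first and force the dichromatic number afterwards, rather than the other way around.
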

\begin{proof} 
We show $\dic(\mathbb{N}_1), \dic(\mathbb{N}_2) , \dic(\mathbb{S}_1) \geq 3$.
To prove the lower bound for $\mathbb N_1$, we construct an oriented graph embeddable on $\mathbb N_1$ with dichromatic number $3$. 

The complete graph on $6$ vertices $K_6$ can be embedded as a {\bf triangulation} of the projective plane, that is  is an embedding of $K_6$ in the projective plane such that all faces are triangles.
\begin{figure}[htp]
    \centering
    \includegraphics[width=.8\textwidth]{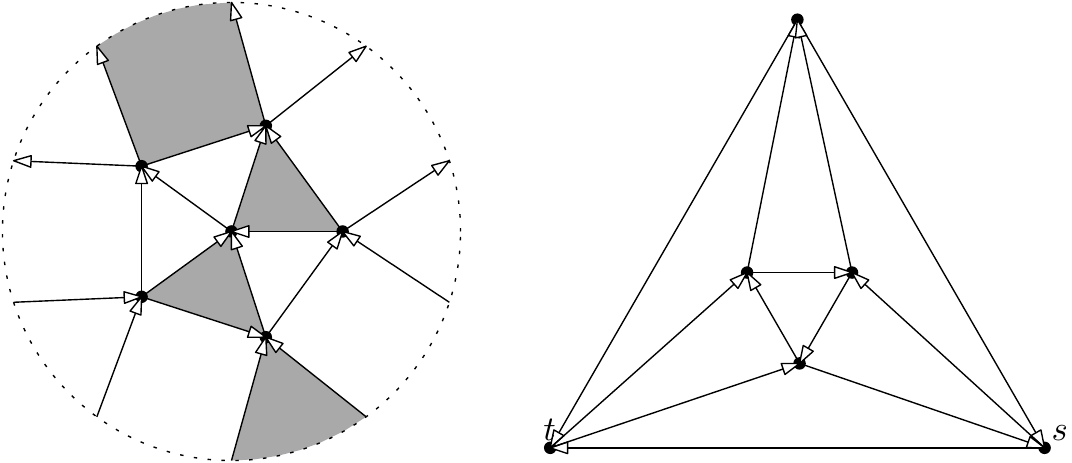}
    \caption{Left: an orientation $T$ of $K_6$ on the projective plane. Right: the gadget graph.}\label{fig:K6}
\end{figure}
Let $T$ be the orientation of $K_6$ displayed on the left of Figure~\ref{fig:K6}. Let $\vec{G}$ be the oriented graph obtained from $T$ by adding in each gray triangular face (which is a transitive tournament on three vertices with source $s$ and sink $t$), the gadget graph depicted on the left of   Figure~\ref{fig:K6}. Observe that in any $2$-dicolouring of the gadget  graph,
 the vertices of the outer face do not have all the same colour.

Assume now for a contradiction that $\vec{G}$ admits a $2$-dicolouring. Observe that either we have a monochromatic directed triangle in $T$ or one of the gray triangles is monochromatic.
But then the $2$-dicolouring cannot be extended to the gadget inside this transitive tournament by the above observation.
Hence $\vec{G}$ is not $2$-dicolourable.

While the above graph has 15 vertices, for the Klein bottle and the torus we also have orientations of $K_7\setminus e$ and $K_7$, respectively, of dichromatic number $3$, see Proposition~\ref{prop_3crit_edges} \textit{(ii)}.

Since any surface different from the sphere admits an embedding of one of the above graphs, we get the result.
\end{proof}

Let us continue with an upper bound.

\begin{theorem}\label{thm:N3}
$\vec{\chi}(\mathbb{N}_3) \leq 3$.
\end{theorem}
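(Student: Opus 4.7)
The plan is to suppose for contradiction that $\vec{G}$ is a $4$-dicritical oriented graph embedded on $\mathbb{N}_3$ (Euler characteristic $c = -1$), and to produce a $3$-dicolouring of $\vec{G}$. Theorem~\ref{thm:4crit} yields $n(\vec{G}) \leq 4 - 9c = 13$, while Theorem~\ref{thm_nl_clique} forces $n(\vec{G}) \geq 11$ and, when $n(\vec{G}) = 11$, pins $\vec{G}$ down to the tournament $ST_{11}$. Since $ST_{11}$ has $55$ arcs but Euler's inequality on $\mathbb{N}_3$ gives $m(\vec{G}) \leq 3n(\vec{G}) + 3 = 36$, the case $n(\vec{G}) = 11$ is ruled out immediately.

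For $n(\vec{G}) \in \{12, 13\}$, I would combine the Kostochka--Stiebitz bound $3m(\vec{G}) \geq 10 n(\vec{G}) - 4$ (used in the proof of Theorem~\ref{thm:4crit}) with Euler's inequality $m(\vec{G}) \leq 3n(\vec{G}) + 3$ to pin $m(\vec{G})$ exactly to $39$ if $n(\vec{G}) = 12$, and to $42$ if $n(\vec{G}) = 13$. By Proposition~\ref{prop_crit_deg_digraphe}, every vertex of $\vec{G}$ has $d^+, d^- \geq 3$, hence total degree at least $6$. Set $L = \{v : d^+(v) = d^-(v) = 3\}$ and $H = V(\vec{G}) \setminus L$; each vertex of $H$ has total degree at least $7$, so a degree count gives $|H| \leq 2m(\vec{G}) - 6n(\vec{G}) = 6$, whence $|L| \geq n(\vec{G}) - 6$.

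By Theorem~\ref{thm:gallai_or}, $\vec{G}\langle L \rangle$ is a directed cactus, and Lemma~\ref{lem:foret-cactus} applied to its underlying cactus produces an induced forest $F \subseteq L$ of order at least $\lceil 2|L|/3 \rceil$, which is automatically acyclic as a subdigraph of $\vec{G}$. The heart of the argument is to show that $\vec{G}' = \vec{G} - F$ is $2$-dicolourable: colouring $F$ with a third colour then yields a $3$-dicolouring of $\vec{G}$, contradicting $\dic(\vec{G}) = 4$. Since every $v \in F \subseteq L$ has degree exactly $6$ and $F$ spans at most $|F| - 1$ arcs, removing $F$ deletes at least $6|F| - (|F|-1) = 5|F| + 1$ arcs, leaving $m(\vec{G}') \leq m(\vec{G}) - 5|F| - 1$. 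Plugging in $|F| \geq \lceil 2(n(\vec{G})-6)/3 \rceil$, that is $|F| \geq 4$ when $n(\vec{G}) = 12$ and $|F| \geq 5$ when $n(\vec{G}) = 13$, one checks $n(\vec{G}') \leq 8$ and $m(\vec{G}') \leq 18$ in both cases. Proposition~\ref{prop_3crit_edges} then rules out any $3$-dicritical subdigraph (none exist on $\leq 6$ vertices, and on $7$ or $8$ vertices at least $20$ arcs are required), so $\dic(\vec{G}') \leq 2$.

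The main obstacle is that this closing arithmetic is narrow: Lemma~\ref{lem:foret-cactus} only guarantees a forest of order $\lceil 2|L|/3 \rceil$, and the $20$-arc threshold from Proposition~\ref{prop_3crit_edges} is met with little margin. What makes the argument close is that Kostochka--Stiebitz and Euler are simultaneously essentially tight for $n(\vec{G}) \in \{12, 13\}$ on $\mathbb{N}_3$: this forces $|H| \leq 6$ exactly, guarantees $|L|$ is large enough to extract a sufficiently big forest, and makes every vertex of the extracted $F$ have degree exactly $6$, so that removing $F$ strips a maximal number of arcs.
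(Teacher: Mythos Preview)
Your proposal is correct and follows the same overall strategy as the paper: bound $n(\vec G)\in\{12,13\}$ via Theorems~\ref{thm_nl_clique} and~\ref{thm:4crit}, use Theorem~\ref{thm:gallai_or} to see that the degree-$6$ vertices induce a directed cactus, extract from it a large acyclic set via Lemma~\ref{lem:foret-cactus}, and finish by checking that the remainder has at most $8$ vertices and at most $18$ arcs so that Proposition~\ref{prop_3crit_edges} forces it to be $2$-dicolourable.

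The difference with the paper lies in how the bound $m(\vec G')\leq 18$ is obtained. The paper first argues separately that $n(H)\neq 4$, then derives $m(H)\leq 10$ through a chain of inequalities involving $m(H,T)$ and $\varepsilon(H)$, and finally bounds $m(\vec G - A)\leq m(H)+8$ using that each vertex of $T\setminus A$ lies in a cycle with $A$ (by maximality of $A$) and hence contributes at most $4$ arcs to the remainder. You bypass all of this by directly counting the arcs deleted when removing the forest $F$: each of the $|F|$ vertices has degree exactly $6$ and $F$ spans at most $|F|-1$ arcs, giving at least $5|F|+1$ arcs removed. This is cleaner and avoids the case split on $n(H)$. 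One minor remark: the exact pinning of $m(\vec G)$ via the Kostochka--Stiebitz lower bound is a nice observation but not actually needed---Euler's upper bound $m(\vec G)\leq 3n(\vec G)+3$ alone already yields $|H|\leq 6$ and the final arc bound.
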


\begin{proof} 
Suppose for a contradiction that there exists a $4$-dicritical oriented graph $\vec{G}$ embeddable on $\mathbb{N}_3$. 
By Theorem~\ref{thm_nl_clique}, $12 \leq n(\vec G)$ (because $ST_{11}$ is an orientation of $K_{11}$ which is not embeddable on $\mathbb{N}_3$), and by Theorem~\ref{thm:4crit}, $n(\vec G) \leq 13$ (because $c(\mathbb N_3)= -1$), 

Let $T$ be the subdigraph induced by the vertices of degree $6$ (i.e. in-degree $3$ and out-degree $3$). Let $H = G-T$, and let $m(H,T)$ be the number of arcs with one end-vertex in $H$ and the other in $T$.
By Theorem~\ref{thm:gallai_or},  $T$ is a directed cactus. 
In particular, it has a vertex with at most two neighbours in $T$ and thus at least $4$ in $H$. Hence $n(H) \geq 4$. 

Set $\varepsilon(H) = \sum_{v \in V(H)}(d(v)-7)$.
Euler's Formula yields 
$6n(T) + 7 (n(\vec G)-n(T)) + \varepsilon(H)  =
2m(\vec{G}) \leq 6n + 6$ 
and so:
\begin{equation}\label{eq:n(T)}
    n(T) \geq n(\vec G) - 6 + \varepsilon(H) \geq n(\vec G)-6.
\end{equation}

By  Lemma~\ref{fait_t}, $m(T) \leq \frac{3}{2}n(T) - \frac{3}{2}$.
Hence 
\begin{align*}%\label{m(H,T)}
m(H,T) = 6n(T) - 2m(T) \geq 3n(T) +3. 
\end{align*}

Since $T$ is a cactus, it is $2$-dicolourable, so $H$ must have dichromatic number at least $2$, and thus $m(H) \geq 3$. 
Now
\begin{eqnarray}
\varepsilon(H) & = &  m(H,T) + 2m(H)  - 7 n(H) \label{eq:eps}\\
 & \geq & 3n(T) + 9  - 7 n(H) \label{eq:eps2}
\end{eqnarray}

Assume for a contradiction that $n(H)=4$. Then $n(T) \geq 8$.
By \eqref{eq:eps2}, we have $\varepsilon(H) \geq 5$.
Thus by \eqref{eq:n(T)}, $n(T)\geq n(\vec G) -1$, a contradiction.
Hence $n(H) \in \{5,6\}$. % that is $n(\vec G)-5\leq n(T) \leq n(\vec G)-6$.

All together, we have:
\begin{equation}\label{eq:all}
n(\vec G) \in \{12, 13\}, \quad n(T) \in \{6, 7, 8\} \quad \text{ and } \quad n(H) \in \{5, 6\}
\end{equation}

We are now going to bound the number of edges in $H$: \begin{align*}
m(H) & =   m(G) - m(H,T) - m(T)&\\
     & =  m(G) - \big(6n(T) - 2m(T)\big) - m(T)&\\
     & =  m(G) - 6n(T) + m(T)& \\
     & \leq 3n(\vec G) + 3 - 6n(T) + \frac{3}{2}(n(T) - 1)& \text{by Euler's formula and Lemma~\ref{fait_t}}\\
     & \leq 3n(\vec G) + 3 - \frac{9}{2}n(T) - \frac{3}{2}&\\
     & \leq 3n(\vec G) + 3 - \frac{9}{2}(n(\vec G) - 6) - \frac{3}{2}& \text{by (\ref{eq:n(T)})}\\
     &\leq 10& \text{by (\ref{eq:all})} 
\end{align*}

Let $A$ be a maximum acyclic subdigraph of $T$. 
By Lemma~\ref{lem:foret-cactus}, $n(A) \geq \lceil \frac{2}{3}n(T) \rceil$. 
So $n(T - A)\leq \frac{1}{3}n(T) \leq 2$ by (\ref{eq:all}), and since $n(H) \leq 6$, we have $n(\vec{G} - A) \leq 6+2=8$.  
Each vertex $v$ in $T - A$ must be in a cycle in $T\langle V(A) \cup \{v\}\rangle$, and so has at most $4$  neighbours in $\vec{G} - A$. Hence
$m(\vec{G} - A) \leq m(H) + 8 \leq 18$. 
Now, $\vec{G} - A$ is $2$-dicolourable by Proposition~\ref{prop_3crit_edges} and thus $\vec{G}$ is $3$-dicolourable. 
\end{proof}

Combining Theorems~\ref{thm:N1} and~\ref{thm:N3} with the fact that Dyck's surface is the torus plus a cross-cap determines the dichromatic number of the above surfaces.
\begin{corollary}\label{cor:lowgenus}
 $\dic(\mathbb{N}_1)=\dic(\mathbb{N}_2)=\dic(\mathbb{N}_3)=\dic(\mathbb{S}_1)=3$. 
\end{corollary}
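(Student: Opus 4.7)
The plan is to assemble the corollary directly from Theorem~\ref{thm:N1}, Theorem~\ref{thm:N3}, and a standard topological observation, so the whole argument should be quite short.

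For the lower bounds, I would simply note that each of $\mathbb{N}_1$, $\mathbb{N}_2$, $\mathbb{S}_1$, and $\mathbb{N}_3$ has Euler characteristic at most $1$. Theorem~\ref{thm:N1} then immediately yields $\dic(\Sigma) \geq 3$ for each of these four surfaces. In fact, the proof of Theorem~\ref{thm:N1} already produces explicit oriented witnesses (the gadgeted orientation of $K_6$ on the projective plane, and the orientations of $K_7\setminus e$ and $K_7$ on the Klein bottle and torus), so nothing more is required.

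For the upper bounds, it suffices to show that every graph embeddable on $\mathbb{N}_1$, $\mathbb{N}_2$, or $\mathbb{S}_1$ is also embeddable on $\mathbb{N}_3$; combined with Theorem~\ref{thm:N3}, this gives $\dic(\Sigma)\leq \dic(\mathbb{N}_3)\leq 3$ for those three surfaces. Here I would invoke the classical fact that attaching a cross-cap or a handle to a surface enlarges the class of embeddable graphs, together with Dyck's theorem, which implies $\mathbb{S}_1 \# \mathbb{N}_1 \cong \mathbb{N}_3$ (as alluded to in the excerpt by the phrase ``Dyck's surface is the torus plus a cross-cap''). This realizes $\mathbb{N}_3$ as $\mathbb{S}_1$ with an extra cross-cap, as $\mathbb{N}_2$ with an extra cross-cap, and (via $\mathbb{N}_1\#\mathbb{N}_2\cong\mathbb{N}_3$) as $\mathbb{N}_1$ with two extra cross-caps, so the desired embeddability inclusion holds.

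All the genuine work is hidden in Theorem~\ref{thm:N3} (the discharging-style argument via the directed-cactus lemma) and in the constructions from Theorem~\ref{thm:N1}; there is no further obstacle to clear here, and the proof fits in a few lines.
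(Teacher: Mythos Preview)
Your proposal is correct and matches the paper's approach exactly: the paper simply states that combining Theorems~\ref{thm:N1} and~\ref{thm:N3} with the fact that Dyck's surface is the torus plus a cross-cap yields the result. Your write-up just spells out the embeddability inclusions a bit more explicitly than the one-line justification in the paper.
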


\subsection{The dichromatic number of $\mathbb{S}_5$ and  $\mathbb{N}_{10}$}
%%%%%%%%%%%%%%%%%%%%%%%%%%%%%%%%%%%%%%%%%%%%%%%%%%%%%

By Theorem~\ref{thm:plongement-complet}, the complete graph on $11$ vertices is embeddable on every surface of Euler characteristic at most $-8$ and by Theorem~\ref{thm_nl_clique} its orientation $ST_{11}$ has dichromatic number $4$. Hence we have the following.

\begin{proposition}\label{prop:carac-8}
If $\Sigma$ is a surface of Euler characteristic at most $-8$, then
$\vec{\chi}(\Sigma) \geq 4$.
\end{proposition}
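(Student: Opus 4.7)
The plan is to realize $ST_{11}$ as an oriented graph embeddable on any surface $\Sigma$ with $c(\Sigma) \leq -8$, and then invoke the fact that $\vec{\chi}(ST_{11}) = 4$ to conclude.

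First I would compute the Heawood bound at $c = -8$: we have $H(-8) = \left\lfloor (7+\sqrt{49+192})/2\right\rfloor = \left\lfloor (7+\sqrt{241})/2\right\rfloor = 11$, and since $H$ is non-increasing in $c$, we have $H(c) \geq 11$ for every $c \leq -8$. Next I would apply Theorem~\ref{thm:plongement-complet} (Ringel--Youngs): since every surface $\Sigma$ with $c(\Sigma) \leq -8$ is distinct from the Klein bottle (which has Euler characteristic $0$), the complete graph $K_{H(c)}$, and in particular $K_{11}$ as a subgraph, embeds on $\Sigma$.

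Finally I would orient this embedded $K_{11}$ as the tournament $ST_{11}$ from Theorem~\ref{thm_nl_clique}, obtaining an oriented graph embeddable on $\Sigma$ with dichromatic number $4$. By the definition of $\vec{\chi}(\Sigma)$, this gives $\vec{\chi}(\Sigma) \geq 4$. There is no real obstacle here: the proposition follows by directly chaining the two theorems already at our disposal together with the elementary computation of $H(-8)$.
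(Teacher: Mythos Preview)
Your proof is correct and follows essentially the same approach as the paper: embed $K_{11}$ on $\Sigma$ via Ringel--Youngs (Theorem~\ref{thm:plongement-complet}) using $H(-8)=11$, then orient it as $ST_{11}$ and invoke Theorem~\ref{thm_nl_clique}. The paper states this even more tersely in the sentence preceding the proposition, but the content is identical.
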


The remainder of the subsection is dedicated to the proof that the above inequality is tight for $\mathbb{S}_5$ and $\mathbb{N}_{10}$.

We shall need some preliminary notions and results.
Let $D$ be a digraph.
A {\bf list assignment} of $D$ is a mapping $L: V(D) \to \mathcal{P}(C)$, where $C$ is a set of colours.
An  {\bf  $L$-dicolouring} of $D$ is a dicolouring $\phi$ of $G$ such that $\phi(v) \in L(v)$ for all $v\in V(D)$.
If $D$ admits an  $L$-dicolouring, then it is {\bf  $L$-dicolourable}.

%Every undirected graph $G$ determines a {\bf bidirected digraph} %$D(G$) 
%that is obtained from $G$ by replacing each edge with two oppositely directed arcs joining the same pair of vertices. 
%\PA{change the notation for $\ovlra G$? or just don't define any notation, I don't think we use it for now. Also we should define bidirected graph before, when we explain why we look at oriented graph and not digraph}

\begin{theorem}[Harutyunyan and Mohar~\cite{gallai_type_theorem}]\label{thm_gallai_list}
Let $D$ be a digraph and $L$ be a list assignment  of $D$ such that $|L(v)| \geq \max \{d^+(v), d^-(v)\}$ for every vertex $v\in V(D)$.
If $D$ is not $L$-dicolourable, then  $|L(v)| = \max\{d^+(v), d^-(v)\}$ for every vertex $v$ and every block of $D$ is either
\begin{itemize}
    \item a directed cycle, or
    \item a bidirected odd cycle, or 
    \item a bidirected complete graph.
\end{itemize}
\end{theorem}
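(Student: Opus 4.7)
The plan is to adapt the classical Erd\H{o}s--Rubin--Taylor proof of degree-choosability to the dicolouring setting, by strong induction on $|V(D)|$. Take a minimum counterexample $(D,L)$: $D$ is not $L$-dicolourable, $|L(v)| \geq \max\{d^+(v), d^-(v)\}$ for all $v$, yet either some $|L(v)|$ strictly exceeds $\max\{d^+(v), d^-(v)\}$ or some block of $D$ is not one of the three listed types. One may assume $D$ is connected, since otherwise each component could be dicoloured separately.

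The first step is to force tight list sizes. Suppose $|L(v_0)| > \max\{d^+(v_0), d^-(v_0)\}$. By induction, $(D - v_0, L)$ admits an $L$-dicolouring $\varphi$ (the case where $D-v_0$ still falls under the theorem's conclusion is handled by an auxiliary argument comparing the in- and out-degrees of the neighbours of $v_0$). A colour $c$ is forbidden at $v_0$ precisely when $\varphi^{-1}(c) \cup \{v_0\}$ contains a monochromatic directed cycle through $v_0$, which requires $c$ to appear on both an out-neighbour and an in-neighbour of $v_0$. Hence the forbidden colours lie in $\varphi(N^+(v_0)) \cap \varphi(N^-(v_0))$, of cardinality at most $\min\{d^+(v_0), d^-(v_0)\} \leq \max\{d^+(v_0), d^-(v_0)\} < |L(v_0)|$; a free colour therefore exists, $\varphi$ extends to $D$, and we reach a contradiction. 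Thus $|L(v)| = \max\{d^+(v), d^-(v)\}$ for every $v$, and the same counting under tight lists also forces $d^+(v) = d^-(v)$ at every vertex, since any strict inequality $\min < \max = |L(v_0)|$ still leaves a free colour at $v_0$.

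The second step is to analyse the block structure under these now tight, ``diregular'' lists. If $D$ has a cut vertex $w$, decompose $D = D_1 \cup D_2$ with $V(D_1)\cap V(D_2)=\{w\}$. For each $c\in L(w)$, restrict the list at $w$ to $\{c\}$ on both sides and try to dicolour each piece by induction; a simultaneous success glues to an $L$-dicolouring of $D$, contradicting non-dicolourability. The failure of this for every $c$ forces, via induction on the smaller $D_i$, every block of each $D_i$ to belong to the three listed families, and the conclusion transfers to $D$. The remaining case, $D$ itself $2$-connected, is the heart of the proof: one shows via Kempe-style extension and swap arguments along monochromatic directed paths between the in- and out-neighbours of a carefully chosen vertex that the only $2$-connected obstructions are the directed cycle, the bidirected odd cycle, and the bidirected complete graph.

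The main obstacle is this last step. Under tight diregular lists the naive forbidden-colour count no longer produces a free colour, and one must perform a Kempe-chain recolouring to liberate one. Tracking arc orientations together with the digon/single-arc split is what makes the digraph setting genuinely harder than the undirected ERT case: the directed-cycle family corresponds to $d^+=d^-=1$, the bidirected odd cycle corresponds to the classical odd-parity obstruction lifted from ERT, and the bidirected complete graph is the dense extremal case. The most delicate part is excluding all hybrid $2$-connected digraphs that mix digons with single arcs, a configuration with no direct undirected analogue, resolved in the Harutyunyan--Mohar proof by a subtle digraph-specific recolouring argument.
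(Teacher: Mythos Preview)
The paper does not prove this theorem; it is quoted from Harutyunyan and Mohar as a black box and invoked exactly once, in the $n=19$ case of the proof of Theorem~\ref{thm:S5}. There is therefore no proof in the paper to compare your proposal against.

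On the proposal itself: the overall plan is the correct one and mirrors the Harutyunyan--Mohar argument. Your forbidden-colour count $|\varphi(N^+(v_0))\cap\varphi(N^-(v_0))|\le\min\{d^+(v_0),d^-(v_0)\}$ is the right directed replacement for the undirected degree bound, and your parenthetical auxiliary argument does work once spelled out: if $D-v_0$ still fell under the theorem's conclusion then every block would be Eulerian, so any neighbour $w$ of $v_0$ would have $|L(w)|=d^+_{D-v_0}(w)=d^-_{D-v_0}(w)$, whence the extra arc(s) to $v_0$ force $\max\{d^+_D(w),d^-_D(w)\}>|L(w)|$, contradicting the hypothesis on $D$. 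The cut-vertex reduction is shakier as written: restricting $L(w)$ to a singleton $\{c\}$ destroys the list-size hypothesis at $w$, so you cannot literally apply the induction hypothesis to $(D_i,L|_{L(w)=\{c\}})$; the standard repair is to apply induction to $(D_i,L)$ with the full list and exploit tightness at $w$ in each piece. The real issue, however, is the $2$-connected case, which you correctly identify as the heart of the matter but then explicitly defer to Harutyunyan and Mohar (``resolved \dots\ by a subtle digraph-specific recolouring argument''). Since that case carries essentially all the content of the theorem, what you have written is an accurate roadmap rather than a proof.
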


% 
% The surfaces $\mathbb{S}_5$ and $\mathbb{N}_{10}$ have Euler characteristic $-8$. Thus, 
% by Theorem~\ref{thm:plongement-complet}, $K_{11}$ is embeddable on those surfaces. 
% But, according to Theorem~\ref{thm_nl_clique}, there is an orientation
% of $K_{11}$ with dichromatic number $4$. Therefore
% \[
% \vec{\chi}(\mathbb{S}_5) \geq 4 \mbox{~~~~and~~~~} \vec{\chi}(\mathbb{N}_{10}) \geq 4.
% \]
% 

The following result shows that in fact equality holds.

\begin{theorem}\label{thm:S5}
Every oriented graph embeddable on  $\mathbb{S}_5$ or $\mathbb{N}_{10}$ is $4$-dicolourable.
\end{theorem}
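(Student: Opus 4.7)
The plan is to suppose for contradiction that there is a $5$-dicritical oriented graph $\vec G$ embedded in $\Sigma$ with $c(\Sigma) = -8$, and produce a $4$-dicolouring. By Proposition~\ref{prop_n_leq15}, $n(\vec G) \geq 16$, and by Proposition~\ref{prop_5crit} with $k=5$, $n(\vec G) \leq -3c/(k-4) = 24$. I let $T$ be the subdigraph induced by the vertices of in- and out-degree exactly $4$ (the minimum possible, by Proposition~\ref{prop_crit_deg_digraphe}), and set $H = \vec G - V(T)$. By Theorem~\ref{thm:gallai_or}, $T$ is a directed cactus, and Lemma~\ref{fait_t} gives $m(T) \leq \tfrac{3}{2}(n(T)-1)$.

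\textbf{Key inequality.} Every vertex of $H$ has total degree at least $9$. Combining this with Euler's formula $m(\vec G) \leq 3 n(\vec G) + 24$ and the identity $2m(\vec G) = 8 n(T) + \sum_{v\in H} d(v) \geq 8 n(\vec G) + n(H)$ yields the central bound
\[
  n(H) \leq 48 - 2 n(\vec G).
\]
Thus $n(T)$ is pinned close to $n(\vec G)$ when $n(\vec G)$ is large, while $n(H)$ can only grow when $n(\vec G)$ drops toward $16$. A parallel computation from $m(T,H) = 8 n(T) - 2 m(T) \geq 5 n(T) + 3$ quantifies how many arcs leave $T$.

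\textbf{Main strategy.} The goal is to exhibit an acyclic set $A \subseteq V(\vec G)$ with $\vec G - A$ $3$-dicolourable, so that colouring $A$ with a fourth colour gives a $4$-dicolouring of $\vec G$, contradicting $5$-dicriticality. I take $A$ to be a maximum acyclic subdigraph of the cactus $T$; Lemma~\ref{lem:foret-cactus} yields $|A| \geq \lceil 2 n(T)/3 \rceil$, hence $n(\vec G - A) \leq n(H) + \lfloor n(T)/3 \rfloor$. When $n(\vec G) \in \{22, 23, 24\}$, the key inequality forces $n(\vec G - A) \leq 10$, so Theorem~\ref{thm_nl_clique} makes $\vec G - A$ $3$-dicolourable and we are done.

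\textbf{The hard range and main obstacle.} The main difficulty is the range $16 \leq n(\vec G) \leq 21$, where $n(H)$ can be as large as $16$ and $T$ may be small or even empty. Here I would enlarge $A$ beyond $T$, using Theorem~\ref{thm_tournament_14} and Corollary~\ref{cor_tournament_13_12} to extract an additional acyclic subdigraph of order $5$ in $H$ or in $\vec G - A$ (with $ST_{13}$ as the sole obstruction), and Lemma~\ref{lem_acyclique_log} in smaller cases. A delicate subpoint is that, when $n(\vec G - A) = 11$, $\vec G - A$ might be isomorphic to $ST_{11}$, the unique oriented graph on $11$ vertices of dichromatic number $4$ (Theorem~\ref{thm_nl_clique}); ruling this out should follow from the abundance of $T$--$H$ arcs above, which lets us swap vertices between $A$ and $\vec G - A$ to avoid $ST_{11}$ as an induced subgraph. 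The hardest subcase is $n(\vec G) = 16$, where $T$ can be empty; there I would apply the list-dicolouring result (Theorem~\ref{thm_gallai_list}) to a carefully chosen subdigraph of $\vec G$ whose vertices have in- and out-degree exactly $4$ in the ambient graph, exploiting that in a $5$-dicritical oriented graph the only admissible blocks of the resulting "low-degree" subdigraph are directed cycles. A finite case analysis on $n(\vec G)$ then closes the proof.
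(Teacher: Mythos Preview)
Your setup matches the paper's --- $T$ the subdigraph on vertices of degree~$8$, $H=\vec G-T$, $T$ a directed cactus by Theorem~\ref{thm:gallai_or}, and Euler giving $n(T)\ge 3(n-16)$ --- but you miss the single observation that drives the whole argument: since $T$ is a directed cactus it is $2$-dicolourable, so $H$ cannot be $2$-dicolourable (otherwise $\vec G$ would be $4$-dicolourable). Hence $H$ contains a $3$-dicritical oriented subgraph, and Proposition~\ref{prop_3crit_edges} forces $n(H)\ge 7$ and, more importantly, $m(H)\ge 20$. Feeding $m(H)\ge 20$ into the identity $m(H)=m(\vec G)+m(T)-8n(T)$ together with Lemma~\ref{fait_t} and Euler yields $3(n-16)\le n(T)\le (6n+5)/13$, hence $n\le 19$, not $n\le 24$. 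Your cases $n\in\{20,\dots,24\}$ are thus vacuous, and your ``main strategy'' for them, while correct, is unnecessary. More to the point, this observation is not merely a shortcut: the paper uses $m(H)\ge 20$ (and indeed the exact structure of the unique $7$-vertex $3$-dicritical oriented graph) again inside the $n=19$ and $n=18$ subcases.

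Your sketch for the remaining range is too loose to be a proof, and it diverges from what actually works. The paper does \emph{not} take $A$ to be a large acyclic subset of the cactus $T$. For $n\in\{17,18\}$ it instead picks one vertex $u$ of degree~$8$, removes $N^+(u)\cup\{u\}$, applies Corollary~\ref{cor_tournament_13_12} or Theorem~\ref{thm_tournament_14} to the remaining $12$ or $13$ vertices to get an acyclic $5$-set, and adjoins $u$ to obtain an acyclic $6$-set $A$; careful edge-counting (e.g.\ $m(\vec G-A)\le 42<55=m(ST_{11})$) then excludes $ST_{11}$ in the complement. For $n=16$ the same scheme works directly with an acyclic $5$-set. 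You also have Theorem~\ref{thm_gallai_list} in the wrong place: the paper uses it only in the $n=19$ case, applied to the cactus $T$ (there $n(T)=9$ and, in the tight subcase, every block of $T$ is a directed triangle so $T$ is Eulerian), with lists coming from a $3$-dicolouring of the $10$-vertex graph $H$; it plays no role when $n=16$, where $T$ may be empty and the argument is purely combinatorial. The ``swap vertices to avoid $ST_{11}$'' idea you propose is in the right spirit, but the paper makes it work via explicit arc bounds rather than abundance of $T$--$H$ arcs, and the $n=18$ case in particular requires a second round of the same trick (extract an acyclic $5$-set inside $\vec G-A$, then exclude the $7$-vertex $3$-dicritical graph by yet another edge count).
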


\begin{proof}
Let $\vec{G}$ be a $5$-dicritical oriented graph of order $n$ which is embedded in $\mathbb{S}_5$ or $\mathbb{N}_{10}$, and assume for a contradiction that
$\vec{G}$ is not $4$-dicolourable.

Let $T$ be the subdigraph induced by the vertices of degree $8$ (i.e. in-degree $4$ and out-degree $4$).
 Set $H=\vec{G}-T$,  $n_8 = n(T)$ and let $m(H,T)$ be the number of arcs with one end-vertex in $H$ and the other in $T$.
By Theorem~\ref{thm:gallai_or}, $T$ is a directed cactus and so  is $2$-dicolourable.
Therefore $H$ is not $2$-dicolourable. In particular, by Proposition~\ref{prop_3crit_edges}, $m(H) \geq 20$.

Euler's Formula yields 
%$\frac{1}{n}\left(8n_8 + 9 (n-n_8) + \sum_{v \in %V(H)}(d(v)-9)\right) =
%i\Ad(\vec{G}) \leq 6 + \frac{6 \times 8}{n}$ 
$8n_8 + 9 (n-n_8) + \sum_{v \in V(H)}(d(v)-9) =
2m(\vec{G}) \leq 6n + 48$ 
and so:
\begin{equation}\label{minoration_n8}
    n_8 \geq 3(n-16) + \sum_{v \in V(H)} (d(v)-9) \geq 3(n-16)
\end{equation}

On the other hand, we have $\sum_{v \in V(T)} d(v) = 8 n_8 = 2m(T) + m(H,T) $ and
$m(\vec{G}) = m(H) + m(H,T) + m(T)$. We deduce
\begin{equation}\label{equa_nb_arcs}
    m(H) = m(\vec{G}) + m(T) - 8 n_8
\end{equation}

By Lemma~\ref{fait_t}, $m(T) \leq \frac{3}{2} (n_8-1)$. Thus
$20 \leq m(H) \leq m(\vec{G}) + \frac{3}{2}(n_8-1) - 8 n_8$. Hence $13n_8\leq 2m(\vec{G}) -43$.
With Eq.~\eqref{minoration_n8} and Euler's formula, it implies
\begin{equation}\label{encadrement}
3(n-16) \leq n_8 \leq \frac{2m(\vec{G}) - 43}{13} \leq \frac{6n+5}{13}
\end{equation}
After simplifying, we get $n \leq 19$. 
Moreover, by Proposition~\ref{prop_n_leq15}, we have $n \geq 16$.
We now distinguish few cases depending on the number $n$ of vertices.
\medskip

\noindent
\underline{Case $n=19$}:
By Eq. \eqref{encadrement}, we have $9\leq n_8 \leq \frac{119}{13}$ and so $n_8 = 9$.

Assume first that $m(T) = \frac{3}{2}(n_8 - 1) = 12$. By   Lemma~\ref{fait_t}, $T$ is connected and each block of $T$ is a directed triangle. So $T$ is Eulerian, i.e. $d^+_T(v)=d^-_T(v)$ for all $v\in V(T)$.

Since $n_8=9$, then $n(H) =10$. So, by Theorem~\ref{thm_nl_clique}, $H$ admits a $3$-dicolouring $\phi$
 with colour set $\{1,2,3\}$. Since all blocks of $T$ are directed triangles, $T$ contains a vertex $v$ such that $d^+_T(v)=d^-_T(v) =1$. 
So $v$ has $3$ out-neighbours in $H$. 
Let $v_1,v_2$ be two of these out-neighbours. 
Let us recolour $v_1$ and $v_2$ by setting $\phi(v_1)=\phi(v_2)=4$ (since there is no digon, the resulting colouring is still proper).
We then define for every vertex $x$ of $T$:
\[
L(x) = \{1,2,3,4\} \setminus \phi(N^+(x) \cap V(H))
\] 
Observe that an $L$-colouring of $T$ extends the $4$-colouring of $H$ into a $4$-colouring of $G$, so $T$ is not $L$-colourable. 
Observe that $|L(x)| \geq 4 -(4-d^+_T(x)) = \max\{ d^+_T(x),d^-_T(x)\}$ because $T$ is Eulerian. Moreover, since $v_1$ and $v_2$ are both coloured $4$, $|L(v)| \geq 2=  \max\{ d^+_T(x),d^-_T(x)\} +1$. So   $T$ is $L$-dicolourable by Theorem~\ref{thm_gallai_list}, a contradiction.  

\medskip
Therefore we have $m(T) \leq 11$. By Euler's Formula, $m(\vec{G})\leq 3 n + 24$, and by Eq.~\eqref{equa_nb_arcs} $m(H) = m(\vec{G}) - 8n_8 + m(T)$.
%m(H) \leq 3.19 +24 -8.9 +11 = 57 +24 - 72 + 11 = 20
Hence $m(H) \leq 20$.
But $H$ is not $2$-dicolourable, so it contains a $3$-dicritical oriented subgraph $\Tilde{H}$, and  $m(\Tilde{H}) \leq 20$. 
By Proposition~\ref{prop_3crit_edges}, there is a unique such $3$-dicritical oriented graph and it has 7 vertices and 20 arcs.
Hence $n(\Tilde{H})=7$, $m(\Tilde{H}) =m(H) =20$ and $H$ is the disjoint union of $\Tilde{H}$ and a stable set $S'$ of size $3$.
 Observe that each vertex of $S'$ has degree at least $9$, which implies that they are adjacent to every vertex of $T$ and have degree exactly $9$. 

Now, $m(\Tilde{H}) < m(K_7)$, so there are two non-adjacent vertices $x,y$ in $\Tilde{H}$.
Thus $S=S'\cup \{x,y\}$ is a stable set of order $5$ in $H$.
Moreover, by Lemma~\ref{lem:foret-cactus}, $T$ has an acyclic subdigraph $A$ of order $6$.
Pick $v \in V(T) \setminus V(A)$. The subdigraph $B$ of $\vec{G}$ induced by $S \cup \{v\}$ is acyclic and has order $6$.
Let $G'= \vec{G} -  (A \cup B)$. Observe that $G'$ has order $19-6-6=7$. Recall that by Theorem~\ref{prop_crit_deg_digraphe}, oriented graphs on at most $6$ vertices are $2$-dicolourable. 

Let $w \in V(G')\cap V(T)$.
\begin{itemize}
    \item
    If $|N(w) \cap V(A)| \leq 1$, then the subdigraph $A'$ induced by $V(A) \cup \{w\}$ is acyclic.
 Hence $G$ can be partitioned into two acyclic subdigraphs $A'$ and $B$ and $G-A'\cup B$ which has order $6$ and so is $2$-dicolourable. Thus $\vec{G}$ is $4$-dicolourable, a contradiction.
    \item
    If $|N(w) \cap V(A) | \geq 2$, then as $w$ is adjacent to all vertices of $S'$, we have $d_{G'}(w) \leq 8 - 2 - 3 = 3$. 
Now, $G'-\{w\}$ is $2$-dicolourable, and since  $d_{G'}(w)=3$, $G'$ is also $2$-dicolourable, and thus $G$ is $4$-dicolourable, a contradiction. 
\end{itemize}

\bigskip

\noindent
\underline{Case $n=18$}:
By Eq. \eqref{encadrement}, we have $n_8 \geq 6$.
Let $u$ be a vertex of degree $8$ in $\vec{G}$ and consider
$\vec{G}' = \vec{G} - (N^+(u) \cup \{u\})$ which is of order $13$.
By Theorem~\ref{thm:euler-gen}, $\Ad(\vec{G}') \leq  6 + \frac{6\times8}{13} < 12 = \Ad(ST_{13})$. So $\vec{G}'\neq ST_{13}$, and so, by Theorem~\ref{thm_tournament_14}, $\vec{G}'$ has an acyclic subdigraph
$A_0$ of order $5$. Then the subdigraph $A$ of $\vec{G}$  induced by $V(A_0) \cup \{u\}$ is acyclic and has order $6$. 

Set $B = \vec{G} - A$. Then 
$m(B) = m(\vec{G}) - \sum_{v \in V(A)}d(v) + m(A) \leq 78-8\times 6 + m(A) \leq 30 + m(A) \leq 30 + \binom{6}{2} = 45$. 
Moreover, $B$ is not $3$-dicolourable, for otherwise $\vec{G}$ would be $4$-dicolourable.
Hence $B$ contains a $4$-dicritical subdigraph $\Tilde{B}$.
 Because $m(\Tilde{B}) \leq m(B) \leq 45 < 55 = m(ST_{11})$, the oriented graph $\Tilde{B}$ is not  $ST_{11}$.
Thus $\Tilde{B}$ has order $12$ by Theorem~\ref{thm_nl_clique}. 
Consequently, for every vertex $v$ of $B$, $B-v$ is $3$-dicolourable and the subdigraph induced by $V(A)\cup \{v\}$ is not acyclic for otherwise $\vec{G}$ would be $4$-dicolourable. 
Hence, for each $v \in B$, $v$ must have at least one in-neighbour and one out-neighbour in $A$ and therefore  $m(A,B) \geq 2 n(B) = 24$.

But then $m(B) = m(\vec{G}) - m(A,B) - m(A) \leq 54-m(A)$. Recall that $m(B) \leq 30+m(A)$. Thus 
\[
m(B) \leq \frac{1}{2} \bigg((30+m(A)) + (54 - m(A))\bigg) = 42.
\]

We now do a similar reasoning with $B$ as the one we just did with $\vec{G}$.  
Because $m(B) \leq 42 < 60$, by Corollary~\ref{cor_tournament_13_12}~(ii), $B$ has an acyclic subdigraph $A'$ of order $5$ .
Set $B'=B - A'$.
Then $B'$ is  not $2$-dicolourable for otherwise $B$ would be $3$-dicolourable.
Recall that $|\Tilde{B}| =|B|$ so $d^+_B (v), d^-_B (v) \geq 3$ for all $v\in V(B)$ by Proposition~\ref{prop_crit_deg_digraphe}.
Thus $m(B') = m(B) - \sum_{v \in V(A')} d_{B}(v) + m(A') \leq 42-6\times 5 + m(A') = 12+m(A')$.

Moreover $B'$ has order $7$. Thus, by Theorem~\ref{thm_nl_clique},
 $B'-v$ is $2$-dicolourable for all vertex $v$ of $B'$.
Therefore $|N(v) \cap V(A')| \geq 2$ for otherwise $B$ would be $3$-dicolourable. Hence $m(A',B') \geq 2 n(B') = 14$.
Consequently $m(B') = m(B) - m(A',B')-m(A) \leq 28-m(A')$.  Together with $m(B') \leq 12+m(A')$, this yields
\[
m(B') \leq \frac{1}{2} \bigg((12+m(A'))+(28-m(A'))\bigg) = 20.
\]
By Proposition~\ref{prop_3crit_edges}, $B'$ is uniquely determined and has exactly $20$ arcs.
Thus there are five vertices with degree $6$ in  $B'$, and two with degree $5$. 
Moreover, each vertex of $B'$ has at least two neighbours in $A'$ and two neighbours in $A$. Hence, five vertices of $B'$ have degree at least $10$ in $G$, and two have degree at least $9$ in $G$.  
Let us denote by $n_9$ and $n_{\geq 10}$ the number of vertices of degree $9$ and at least $10$, respectively.
We have
\[
2m(G) = 2 \times 78  = 8\times 11 + 9 \times 2 + 10 \times 5 \leq 8n_8+9n_9+10n_{\geq 10} \leq 2m(\vec{G}) 
= 2 \times 78
\]

We deduce that the degree list of the vertices of $\vec{G}$:
there are eleven vertices with degree $8$, two vertices with degree $9$ and five with degree $10$. But this contradicts Eq.~\eqref{encadrement} which states that $\vec{G}$ has at most eight vertices of degree $8$. 

%We deduce that the degree list of the vertices of $\vec{G}$:
%there are $8$ vertices h degree $8$, $2$ vertices with degree $9$ and $5$ with degree $10$.
%Moreover, all vertices with degree $9$ or $10$ are in $B'$.
%Hence all vertices of $V(A) \cup V(A')$ have degree $8$ and so, by Theorem~\ref{thm_gallai}, the subdigraph induced by $V(A)\cup V(A')$ is a cactus.
%Thus,  by Lemma~\ref{fait_t}, $m(A \cup A') \leq \frac{3}{2}(11-1) = 15$.
%Hence $m(B) = m(\vec{G}) - \sum_{v \in V(A \cup A')}d(v) + m(A\cup A') \leq 78 - 8 \times 11 + 15 = 5$, a contradiction to $m(B) \geq 20$.

\bigskip

\noindent
\underline{Case $n=17$}:
We have $n_8 \geq 3(17-16) = 3$.
Let $u$ be a vertex of degree $8$. Recall that $d^+(u)=d^-(u)=4$. So  $|N^+(u) \cup \{u\}|=5$.
Hence $\vec{G}' = \vec{G} - (N^+(u) \cup \{u\})$ has order $12$ and $m(\vec{G}') = m(\vec{G}) - \sum_{v \in N^+(u) \cup \{u\}} d(v) + m(N^+(u) \cup \{u\})$.
But  $m(\vec{G})\leq3n + 24 = 75$, $\sum_{v \in N^+(u) \cup \{u\}} d(v) \geq 5 \times 8$, and
 $m(N^+(u) \cup \{u\}) \leq \binom{5}{2}=10$.
Therefore $m(\vec{G}') \leq 75-40+10 = 45<60$.
Thus, by Corollary~\ref{cor_tournament_13_12}, $\vec{G}'$ has an acyclic
subdigraph of order $5$. Adding $u$ to this subdigraph, we obtain an acyclic
subdigraph $A$ of order $6$.
Set $B=\vec{G}-A$. 
Then $n(B)=11$ and  $m(B) = m(\vec{G}) - \sum_{v \in V(A)} d(v) + m(A) \leq 75 - 6 \times 8 + \binom{6}{2} =42$.
Hence $B \neq ST_{11}$ and so $B$ is $3$-dicolourable by Theorem~\ref{thm_nl_clique}~(ii). This implies that $\vec{G}$ is $4$-dicolourable, a contradiction.

\bigskip

\noindent
\underline{Case $n=16$}:
By Theorem~\ref{thm_tournament_14}, $\vec{G}$ has an acyclic subdigraph $A$ of order $5$. Set $B=\vec{G}-A$.
We have $n(B)=11$.

If $B$ is not $ST_{11}$, then by Theorem~\ref{thm_nl_clique}~(ii), it is $3$-dicolourable, and thus $\vec{G}$ is $4$-dicolourable, a contradiction.

Henceforth $B=ST_{11}$, so $m(B) = 55$.
We have $m(\vec{G}) \leq 3n+24= 72$. Thus $m(A,B) \leq m(A,B) + m(A) = m(\vec{G}) - m(B)
\leq 72 - 55 = 17$.
But $\frac{17}{11} < 2$, so there is a vertex $v$ of $B$ such that  $|N(v) \cap V(A)| \leq 1$. The subdigraph $A'$ induced $V(A)\cup\{v\}$ is then acyclic and of order $6$. The oriented graph $\vec{G}-A'$ has order $10$, so, by Theorem~\ref{thm_nl_clique}~(ii), it is $3$-dicolourable. Thus $\vec{G}$ is $4$-dicolourable. This contradiction completes the proof.
\end{proof}

Clearly, Theorem~\ref{thm:S5} also provides an upper bound for the dichromatic number of surfaces of higher Euler characteristic. Moreover,  Proposition~\ref{prop:carac-8} and Theorem~\ref{thm:S5} allow to determine the following dichromatic numbers precisely.

\begin{corollary}\label{cor:N10S5} $\dic(\mathbb{N}_{10})=\dic(\mathbb{S}_5)=4$.
\end{corollary}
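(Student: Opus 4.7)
The plan is to observe that this is an immediate combination of the two preceding results: Proposition~\ref{prop:carac-8} supplies the lower bound, and Theorem~\ref{thm:S5} supplies the matching upper bound. Both $\mathbb{S}_5$ and $\mathbb{N}_{10}$ have Euler characteristic $c = -8$, so they lie in the range where both results apply.

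First I would record that $c(\mathbb{S}_5) = 2 - 2\cdot 5 = -8$ and $c(\mathbb{N}_{10}) = 2 - 10 = -8$, hence both surfaces have Euler characteristic at most $-8$. Therefore Proposition~\ref{prop:carac-8} yields $\vec{\chi}(\mathbb{S}_5) \geq 4$ and $\vec{\chi}(\mathbb{N}_{10}) \geq 4$. The lower bound itself is witnessed by the tournament $ST_{11}$: it is an orientation of $K_{11}$, which by Theorem~\ref{thm:plongement-complet} embeds in every surface of Euler characteristic at most $-8$ (neither of our surfaces is the Klein bottle), and by Theorem~\ref{thm_nl_clique} its dichromatic number is~$4$.

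For the upper bound, Theorem~\ref{thm:S5} states exactly that every oriented graph embeddable on $\mathbb{S}_5$ or $\mathbb{N}_{10}$ is $4$-dicolourable, giving $\vec{\chi}(\mathbb{S}_5) \leq 4$ and $\vec{\chi}(\mathbb{N}_{10}) \leq 4$. Combining the two bounds yields $\vec{\chi}(\mathbb{S}_5) = \vec{\chi}(\mathbb{N}_{10}) = 4$.

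There is no real obstacle at this stage: all the work is already done in Proposition~\ref{prop:carac-8} and Theorem~\ref{thm:S5}. The corollary is essentially a bookkeeping statement that gathers the two matching inequalities.
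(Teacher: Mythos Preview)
Your proposal is correct and matches the paper's approach exactly: the corollary is stated immediately after Theorem~\ref{thm:S5} as the combination of Proposition~\ref{prop:carac-8} (lower bound via $ST_{11}$ embedding on surfaces with $c\le -8$) and Theorem~\ref{thm:S5} (upper bound). Your additional remarks on computing $c(\mathbb{S}_5)=c(\mathbb{N}_{10})=-8$ are fine elaboration but not required.
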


%---------------------------------------------------------- 

\subsection{Dicritical digraphs embeddable in a fixed surface}

%---------------------------------------------------
The goal of this section is to prove that for any surface $\Sigma$, there is a finite number of $(k+1)$-dicritical digraphs embeddable on $\Sigma$ for every $k \geq 6$, see Corollary~\ref{cor:finite-dicrit}.  

The following result bounds the number of $k$-dicritical digraphs embeddable on a surface, when $k \geq 8$.
\begin{proposition}\label{prop_8crit}
Let $k\geq 8$ and let $D$ be a $k$-dicritical digraph embedded in a surface with Euler characteristic $c$. Then
  $$
  n(D) \leq \frac{-6c}{k-7}
  $$
\end{proposition}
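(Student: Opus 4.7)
The plan is to adapt the argument of Proposition~\ref{prop_5crit} to general digraphs, taking into account that a digraph may contain digons (which count as two arcs but only one edge in the underlying simple graph). This is precisely what loses a factor of $2$ and forces the hypothesis $k \geq 8$ instead of $k \geq 5$.

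First, I would apply Proposition~\ref{prop_crit_deg_digraphe} to conclude that every vertex $v$ of $D$ satisfies $d^+(v) \geq k-1$ and $d^-(v) \geq k-1$. Summing over all vertices yields
\[
2m(D) \;=\; \sum_{v \in V(D)}\bigl(d^+(v) + d^-(v)\bigr) \;\geq\; 2n(D)(k-1),
\]
so $m(D) \geq n(D)(k-1)$.

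Next, let $G$ denote the underlying simple graph of $D$ (where each digon and each single arc contribute one edge). Since at most two arcs of $D$ may lie between any pair of vertices, we have $m(D) \leq 2m(G)$, and hence
\[
m(G) \;\geq\; \tfrac{1}{2}\,n(D)(k-1).
\]
The graph $G$ is embedded on the same surface as $D$, so Theorem~\ref{thm:euler-gen} gives $m(G) \leq 3n(G) - 3c = 3n(D) - 3c$. Combining the two inequalities yields
\[
\tfrac{1}{2}\,n(D)(k-1) \;\leq\; 3n(D) - 3c,
\]
which rearranges to $n(D)(k-7) \leq -6c$. Since $k \geq 8$, dividing by $k-7 > 0$ gives the desired bound $n(D) \leq \tfrac{-6c}{k-7}$.

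There is no real obstacle here: the only subtle point is the factor-of-two loss when passing from arcs of $D$ to edges of the underlying simple graph, and this is exactly what forces the threshold $k \geq 8$ and the weaker constant $-6c$ (versus $-3c$ in Proposition~\ref{prop_5crit} for oriented graphs, which have no digons).
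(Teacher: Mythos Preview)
Your argument is correct and is essentially the same as the paper's: both use Proposition~\ref{prop_crit_deg_digraphe} for the lower bound on degrees and then invoke Euler's Formula on the underlying simple graph, with the factor of $2$ coming from the possible digons. The only cosmetic difference is that the paper phrases the Euler bound as $\Ad(D)\leq 12-\frac{12c}{n(D)}$ (i.e.\ it doubles the bound of Theorem~\ref{thm:euler-gen} directly), whereas you pass explicitly through $m(D)\leq 2m(G)$; the computations are identical.
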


\begin{proof}
By Proposition~\ref{prop_crit_deg_digraphe}, $d^+(v),d^-(v) \geq k-1$ for every vertex $v$ of $D$.
Moreover, since there are at most two arcs between any two vertices, by Theorem~\ref{thm:euler-gen}, $\Ad(D) \leq 12 -  \frac{12c}{n(D)}$
and so $2(k-1) \leq 12 - \frac{12c}{n(D)}$.
Now $n(D) (k-1-6) \leq -6c$, and we obtain the result.
\end{proof}

One can however get better upper bounds following the method used by Gallai~\cite{Gal63a,Gal63b} for getting lower bounds on the density of critical graphs. This method is based on the concept of blocks. 
Recall that a graph $G$ is non-separable if it is connected and $G-v$ is connected for all $v\in V(G)$,
and that a block of $G$ is a subgraph which is non-separable and maximal with respect to this property. 
%Let $G$ be a graph. A {\bf block} of $G$ is a maximal $2$-connected subgraph. 
Let $A$ be the set of cut-vertices of $G$ and $\mathcal B$ the set of blocks of $G$. The \textbf{block forest} $B(G)$ of $G$ is the graph on vertices $A \cup \mathcal B$ where $aB$ is an edge of $B(G)$ if and only if $a \in A$, $B \in \mathcal B$ and $a \in B$. The block forest of a graph is a forest. If $G$ is connected, then $B(G)$ is also connected. It is then called the {\bf block tree} of $G$.
%It is well-known that the blocks have a tree structure: the {\bf block tree} of a graph $G$ is the graph $B(G)$ whose vertices are the blocks of $G$ and in which two blocks are adjacent if and only if they share a vertex. The block tree of a graph is a tree. \PA{with this definition the block tree if not a tree, the block tree of a star for example is a clique. Usually I think the separating vertices are also vertices of the bock tree. So proof of Lemma~\ref{lem:gallai-deg} must be change I think} 
A {\bf leaf block} of a graph is a block which is a leaf in the block forest.
Such a  block has exactly one vertex in the union of all other blocks.
This vertex is the {\bf attachment} of the leaf block.
The blocks and the block forest of a digraph are simply those of its underlying multigraph.
A {\bf directed Gallai forest} is a digraph in which each block is a single arc, a directed cycle, a bidirected odd cycle, or a bidirected clique.

\begin{theorem}[Bang-Jensen et al..~\cite{hajosconstruct2019}]\label{thm:gallai}
Let $\vec{G}$ be a $k$-dicritical digraph.
The subdigraph induced by the vertices of in- and out-degree $k-1$  is a directed Gallai forest.
\end{theorem}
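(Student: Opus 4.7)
The plan is to prove Theorem~\ref{thm:gallai} by contradiction, following the classical Gallai strategy for critical graphs, but adapted to the directed setting via the list-dicolouring result of Harutyunyan and Mohar (Theorem~\ref{thm_gallai_list}). Let $D$ be $k$-dicritical and let $H$ be the subdigraph induced by the vertices $v$ with $d^+_D(v)=d^-_D(v)=k-1$. Assume for contradiction that some block $B$ of $H$ is none of: a single arc, a directed cycle, a bidirected odd cycle, a bidirected complete graph. Let $C$ be the connected component of $H$ containing $B$; then $C$ itself contains a block not of the allowed types.

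First I would handle the easy case $V(C)=V(D)$ separately: here $D=H$, every vertex of $D$ has in- and out-degree exactly $k-1$, and the constant list assignment $L(v)=\{1,\dots,k-1\}$ satisfies $|L(v)|=\max\{d^+(v),d^-(v)\}$. Since $D$ has a block not of the exceptional types, Theorem~\ref{thm_gallai_list} yields an $L$-dicolouring of $D$, contradicting $\vec{\chi}(D)=k$.

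Otherwise $V(C)\subsetneq V(D)$, and by $k$-dicriticality $D-V(C)$ admits a $(k-1)$-dicolouring $\phi$. The heart of the argument is to extend $\phi$ to a $(k-1)$-dicolouring of all of $D$, again contradicting $k$-dicriticality. For this, I would build an auxiliary digraph $D^{*}$ on vertex set $V(C)$ by augmenting $D[V(C)]$ with ``shortcut arcs'': for each monochromatic directed path in $D$ whose endpoints lie in $V(C)$, whose internal vertices lie in $V(D)\setminus V(C)$, and whose internal vertices all receive the same colour $c$ under $\phi$, add the corresponding arc between the endpoints of the path in $D^{*}$. Simultaneously, define a list $L(v)\subseteq\{1,\dots,k-1\}$ for each $v\in V(C)$, consisting of those colours $c$ such that not both an out-neighbour and an in-neighbour of $v$ in $V(D)\setminus V(C)$ are coloured $c$ by $\phi$. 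A counting argument exploiting the crucial equality $d^+_D(v)=d^-_D(v)=k-1$ then gives $|L(v)|\ge\max\{d^+_{D^{*}}(v),d^-_{D^{*}}(v)\}$. Moreover, a case analysis on where a hypothetical monochromatic directed cycle of the combined colouring would live (inside $V(C)$, outside $V(C)$, or straddling the boundary) shows that any $L$-dicolouring of $D^{*}$ produces, together with $\phi$, a valid $(k-1)$-dicolouring of $D$, because every monochromatic cycle straddling the boundary projects to a monochromatic cycle in $D^{*}$ that uses some shortcut arcs.

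To finish, I would argue that the block structure of $D^{*}$ still witnesses the obstruction: the shortcut arcs are added in a controlled way (only between vertices of $V(C)$ having a common external monochromatic path), and they do not collapse the block $B$ into an allowed shape — the ``bad'' block of $C$ survives in $D^{*}$ as a block which is still not a directed cycle, not a bidirected odd cycle, and not a bidirected complete graph. Applying Theorem~\ref{thm_gallai_list} to $D^{*}$ with the lists $L$ then yields an $L$-dicolouring of $D^{*}$, hence a $(k-1)$-dicolouring of $D$, the desired contradiction.

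The main obstacle I anticipate is the simultaneous balance required in the construction of $D^{*}$ and $L$: the shortcut arcs must be numerous enough so that the correspondence between $L$-dicolourings of $D^{*}$ and extensions of $\phi$ is faithful, yet sparse enough so that (i) the list-size inequality $|L(v)|\ge\max\{d^+_{D^{*}}(v),d^-_{D^{*}}(v)\}$ holds vertex by vertex, and (ii) the exceptional block structure of $B$ is not artificially created (for instance by shortcut arcs turning $B$ into a bidirected clique). Making these three constraints compatible—presumably by carefully bundling shortcut arcs per colour and per pair, or by replacing the naive construction with an appropriately refined one—will be the technically delicate part of the argument.
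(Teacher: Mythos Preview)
The paper does not contain a proof of this theorem; it is quoted from~\cite{hajosconstruct2019} and used as a black box. So there is no ``paper's own proof'' to compare against here.

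On the merits of your proposal: the overall shape --- take a component $C$ of the low-degree subgraph $H$, $(k-1)$-dicolour $D-V(C)$, and then invoke Theorem~\ref{thm_gallai_list} to extend to $C$ --- is the natural Gallai-type plan, and your treatment of the degenerate case $V(C)=V(D)$ is correct.

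The main case, however, has a genuine gap which you flag but do not close. With your shortcut-arc construction the hypothesis of Theorem~\ref{thm_gallai_list} can fail: a single out-neighbour $w\notin V(C)$ of some $v\in V(C)$, say with $\phi(w)=c$, may lie at the root of a large monochromatic-$c$ acyclic subdigraph inside $D-V(C)$, and this subdigraph can reach many distinct vertices of $V(C)$; each such vertex contributes a shortcut out-arc from $v$, so $d^+_{D^*}(v)$ can exceed $k-1\geq |L(v)|$, and the inequality $|L(v)|\geq\max\{d^+_{D^*}(v),d^-_{D^*}(v)\}$ is lost. Conversely, if you drop the shortcut arcs and work with $C$ itself, your counting does give $|L(v)|\geq\max\{d^+_C(v),d^-_C(v)\}$, but then an $L$-dicolouring of $C$ need not extend to $D$: a monochromatic directed cycle in $D$ can alternate between inside-$C$ segments of at least two vertices and outside-$C$ segments, so that no single vertex of $C$ is forced to have both an in-neighbour and an out-neighbour of that colour outside $C$, and hence nothing in your $L$ forbids this pattern. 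Your further assertion that the bad block $B$ of $C$ persists as a bad block of $D^*$ is also unsupported: shortcut arcs among the vertices of $B$ can turn it into a bidirected clique, or merge it with neighbouring blocks. As written, then, the proposal is a reasonable outline whose central technical step --- reconciling the three requirements you list at the end --- is genuinely missing, not just routine bookkeeping.
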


\begin{lemma}\label{lem:gallai-deg}
Let $k\ge 3$ be an integer. If $H$ is a directed Gallai forest of maximum degree at most $2k$ not containing $\overleftrightarrow{K}_{k+1}$, then 
\[ m(H) \leq \left(k-1+\frac{2}{k}\right) n(H).\]
\end{lemma}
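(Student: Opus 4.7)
My approach is induction on $n(H)$; the base case $n(H)\leq 1$ is trivial, and if $H$ is disconnected, I apply the induction hypothesis to each component and sum the resulting inequalities. If $H$ consists of a single block $B$, the bound is verified by a short inspection of the four admissible block types: arcs and directed cycles give $m(B)/n(B)\leq 1$, bidirected odd cycles give $m(B)/n(B)=2$, and $\overleftrightarrow{K}_s$ with $s\leq k$ gives $m(B)/n(B)=s-1\leq k-1$, and in each case this quantity is at most $k-1+2/k$ when $k\geq 3$.

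Assume now that $H$ is connected with at least two blocks, and pick a leaf block $B_0$ with attachment $v$. If $B_0\neq\overleftrightarrow{K}_k$, I set $H' = H - (V(B_0)\setminus\{v\})$. Since $v$ is the only cut vertex of $H$ that lies in $B_0$, this deletes exactly the arcs of $B_0$, and $H'$ is still a directed Gallai forest satisfying the hypotheses (with one fewer block). The induction hypothesis then reduces the target bound to $m(B_0)\leq (k-1+2/k)(n(B_0)-1)$, which follows by a quick case analysis: the tightest situation is a digon, giving $2\leq k-1+2/k$, which holds for $k\geq 3$. The only case where this computation fails is $B_0=\overleftrightarrow{K}_k$, since then $m(B_0)=k(k-1)>(k-1+2/k)(k-1)$; note also that a bidirected odd cycle of length $3$ coincides with $\overleftrightarrow{K}_3$ and thus falls into the next case when $k=3$.

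The crucial case is $B_0=\overleftrightarrow{K}_k$; this is where the main work happens. I would delete $v$ as well, setting $H' = H - V(B_0)$. Removing $v$ breaks each other block containing $v$ into smaller valid Gallai blocks (a directed cycle through $v$ becomes a directed path, i.e.\ a chain of arc-blocks; a bidirected odd cycle becomes a chain of $\overleftrightarrow{K}_2$-blocks; and $\overleftrightarrow{K}_r$ becomes $\overleftrightarrow{K}_{r-1}$), so $H'$ is still a directed Gallai forest satisfying the hypotheses. Applying the induction hypothesis to each component of $H'$ gives $m(H')\leq (k-1+2/k)(n(H)-k)$. Since the non-attachment vertices of a leaf block have no arcs outside that block, the number of arcs deleted equals $m(B_0) + (d(v)-2(k-1)) = (k-1)(k-2)+d(v)$. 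Substituting and simplifying, the desired inequality reduces precisely to $d(v)\leq 2k$, which is the maximum-degree hypothesis. The intuition is that a $\overleftrightarrow{K}_k$-block saturates $v$'s degree to within $2$, so deleting $v$ costs at most two extra arcs while gaining one full additional vertex in the count, exactly enough to close the induction.
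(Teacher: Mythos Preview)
Your argument is correct and in fact slightly simpler than the paper's. Both proofs share the same overall induction, the same disconnected/single-block base cases, and the same treatment of a non-$\overleftrightarrow{K}_k$ leaf block (remove $V(B_0)\setminus\{v\}$ and check $m(B_0)\leq (k-1+2/k)(n(B_0)-1)$). The difference lies in how the $\overleftrightarrow{K}_k$ leaf block is handled. The paper assumes that \emph{every} leaf block is a $\overleftrightarrow{K}_k$, chooses one at the end of a diameter of the block tree, observes that the neighbouring block $P$ must have maximum degree at most $2$ (hence is an arc or a directed cycle), bundles $P$ together with all the $\overleftrightarrow{K}_k$ leaves hanging off it, and bounds that bundle against $(k-1+2/k)(n-1)$. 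You instead delete the whole $\overleftrightarrow{K}_k$ including the attachment $v$, use the (correct) observation that directed Gallai forests are closed under vertex deletion, and reduce the inequality to $d(v)\leq 2k$. Your route avoids the diameter argument entirely; the paper's route has the aesthetic advantage of always peeling off a union of whole blocks, but this is not needed for the lemma.

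One small inaccuracy: in the case $B_0\neq\overleftrightarrow{K}_k$, the tightest block is not the digon. For $k\geq 4$ the extremal ratio $m(B_0)/(n(B_0)-1)$ is attained by $\overleftrightarrow{K}_{k-1}$ (giving $k-1$), and for $k=3$ it is attained by the bidirected $5$-cycle (giving $5/2$); both are still below $k-1+2/k$, so your conclusion stands, but the sentence identifying the digon as extremal should be corrected.
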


\begin{proof}
We prove the result by induction. 

If $H$ is not connected, then we have the result by applying the induction hypothesis on each of its connected components, and summing the obtained inequalities.
If $H$ consists of a single block, then $H$ is either an arc, a directed cycle, a bidirected odd cycle, or a bidirected clique of order at most $k$. Hence $m(H) \leq (k-1)n(H)$, so we have the result.

Suppose now that $H$ is connected but not $2$-connected.

Assume moreover that $H$ contains a leaf block $H_1$ which is not $\overleftrightarrow{K}_k$. 
Then  
\[m(H_1) \leq \left(k-1+\frac{2}{k}\right)(n(H_1)-1).\]
Let $H_2$ be the union of the blocks distinct from $H_1$. By the induction hypothesis, we have
\[  m(H_2) \leq \left(k-1+\frac{2}{k}\right) n(H_2). \]
Because  $n(H) = n(H_1) + n(H_2)-1$ and $m(H) = m(H_1) + m(H_2)$, we get the result by summing the two above inequalities.

Henceforth, assume that every leaf block is $\overleftrightarrow{K}_k$. 
Let $L$ be a leaf block of $H$ which is the end of a diameter $\mathbb{D}$ in the block tree.
Let $P$ be the block incident to $L$ in $H$. It has maximum degree at most $2$ and thus it is a directed cycle or a single arc. In particular, $P$ is not a leaf block, and so
it is not an end of $\mathbb{D}$. Let $Q$ be the block distinct from $L$ which is incident to $P$ in $\mathbb{D}$, and let $L_1, \dots, L_q$ be the blocks incident to $P$ and distinct from $Q$. Since $\mathbb{D}$ is a diameter, each $L_i$ is a leaf block and thus a $\overleftrightarrow{K}_k$.  
In particular, it implies that $q\leq n(P) -1$. Set $a = n(P) - q -1$, and note that $a\geq 0$.
Let $H_1=P\cup \bigcup_{i=1}^q L_i$.
We have $m(H_1) = qk(k-1) + n(P)= qk(k-1) + q + 1 + a$ and $n(H_1) = q(k-1) + n(P)= qk +1  + a$. 

\begin{eqnarray*}
m(H_1) &  =  & qk(k-1) + q + 1 + a\\
& \leq & qk(k-1) +2q + \left(k-1+\frac{2}{k}\right) a  ~~~~~~\mbox{(because $q\geq 1$ and $a\geq 0$)}\\
& \leq & \left(k-1+\frac{2}{k}\right) qk + \left(k-1+\frac{2}{k}\right) a  = \left(k-1+\frac{2}{k}\right)(n(H_1)-1)
\end{eqnarray*}

Let $H_2$ be the union of the blocks which do not appear in $H_1$. By the induction hypothesis, we have
\[  m(H_2) \leq \left(k-1+\frac{2}{k}\right) n(H_2). \]
Because  $n(H) = n(H_1) + n(H_2)-1$ and $m(H) = m(H_1) + m(H_2)$, we get the result by summing the two above inequalities.
\end{proof}

\begin{theorem}\label{thm:ad-dicrit}
Let $k \geq 3$ be an integer. Let $D$ be a $(k+1)$-dicritical digraph different from $\overleftrightarrow{K}_{k+1}$. Then 
\[ m(D) \geq \left(k + \frac{k-2}{2k^2+3k-4}\right) n(D).\]
\end{theorem}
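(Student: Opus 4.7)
The plan is to follow Gallai's classical density method, adapted to dicritical digraphs through Theorem~\ref{thm:gallai}. Partition $V(D)$ into $L$, the set of vertices with $d^+(v)=d^-(v)=k$ (so total degree exactly $2k$), and $H' := V(D)\setminus L$; by Proposition~\ref{prop_crit_deg_digraphe}, every vertex of $H'$ has total degree at least $2k+1$. Set $\ell = |L|$, $h = |H'|$, and $F = D\langle L\rangle$. By Theorem~\ref{thm:gallai}, $F$ is a directed Gallai forest, and its maximum degree is at most $2k$ since each vertex of $L$ already has total degree $2k$ in $D$. Moreover, $F$ cannot contain $\overleftrightarrow{K}_{k+1}$ as a subdigraph: otherwise dicriticality would force $D = \overleftrightarrow{K}_{k+1}$, contradicting the hypothesis. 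Hence Lemma~\ref{lem:gallai-deg} applies and yields $m(F) \leq \frac{k^2-k+2}{k}\ell$.

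Next, I would count arcs in two ways. Let $m(L,H')$ denote the number of arcs of $D$ with one endpoint in each of $L$ and $H'$, and let $S = \sum_{v \in H'} d_D(v)$. Summing degrees in $L$ gives $2k\ell = 2m(F) + m(L,H')$, so the Gallai-forest estimate implies
\[ m(L,H') \geq 2k\ell - \frac{2(k^2-k+2)}{k}\ell = \frac{2(k-2)}{k}\ell. \]
Summing degrees over all of $V(D)$ gives $2m(D) = 2k\ell + S$, together with two lower bounds on $S$: the trivial one $S \geq (2k+1)h$ coming from the minimum degree in $H'$, and $S \geq m(L,H') \geq \frac{2(k-2)}{k}\ell$ coming from the Gallai estimate.

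Finally, I would combine these two lower bounds on $S$ by a convex combination whose weights are chosen to equalize the resulting coefficients of $\ell$ and $h$. Setting $\lambda = \frac{k}{2k^2+3k-4}$, a direct computation gives
\[ S \geq \lambda\cdot \frac{2(k-2)}{k}\ell + (1-\lambda)(2k+1)h = \frac{2(k-2)}{2k^2+3k-4}\ell + \left(2k + \frac{2(k-2)}{2k^2+3k-4}\right)h. \]
Inserting this into $2m(D) = 2k\ell + S$ yields $2m(D) \geq \left(2k + \frac{2(k-2)}{2k^2+3k-4}\right)n(D)$, i.e.\ the claimed inequality. The real conceptual content sits in Lemma~\ref{lem:gallai-deg}, which controls the density of Gallai forests avoiding $\overleftrightarrow{K}_{k+1}$; once that is in hand, the only obstacle is pinpointing the right convex combination, and the weight $\lambda$ is uniquely forced by matching the target coefficient $\frac{k-2}{2k^2+3k-4}$, after which the verification is a routine algebraic identity.
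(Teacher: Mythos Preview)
Your proposal is correct and follows essentially the same route as the paper: both apply Lemma~\ref{lem:gallai-deg} to the Gallai forest $D\langle L\rangle$, derive the same two linear inequalities (your $2m(D)\geq 2k\ell+\tfrac{2(k-2)}{k}\ell$ and $2m(D)\geq 2k\ell+(2k+1)h$ are exactly the paper's Eqs.~\eqref{eq:arcS} and~\eqref{eq:arcG}), and then take the unique linear combination that eliminates the dependence on $\ell$ versus $h$. The only cosmetic difference is that you phrase the combination as a convex weight $\lambda$ on the degree sum, whereas the paper multiplies~\eqref{eq:arcG} by $k+1-\tfrac{2}{k}$ and adds~\eqref{eq:arcS}; you also make explicit why $D\langle L\rangle$ cannot contain $\overleftrightarrow{K}_{k+1}$, which the paper leaves implicit.
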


\begin{proof}
Let $S$ be the set of vertices $v\in V(D)$ such that $d^+(v)=d^-(v)=k$. By Theorem~\ref{thm:gallai}, the induced subdigraph $D\langle S\rangle$ is a directed Gallai forest, and by Lemma~\ref{lem:gallai-deg}, 
\[ m(D\langle S\rangle ) \leq \left(k-1+\frac{2}{k}\right)|S|.\]

Note that $2k|S|$ is the number of arcs of $D$ incident with vertices of
S, counting those in $D\langle S\rangle $ twice. Hence,
\begin{align}
m(D) \geq 2k|S| - m(D\langle S\rangle ) \ge \left(k+1-\frac{2}{k}\right)|S|. \label{eq:arcS}
\end{align}

All vertices in $V(D)\setminus S$ have degree at least $2k+1$, so
\begin{equation}
2m(D) \ge (2k+1)(n(D)-|S|) + 2k|S| = (2k+1) n(D) -|S|. \label{eq:arcG}
\end{equation}

Considering $(k+1-\tfrac{2}{k})$ Eq.\eqref{eq:arcG} + Eq.\eqref{eq:arcS}, we obtain
\begin{align*}
\left(2k+3-\frac{4}{k}\right)m(D) &\ge (2k+1)\left(k+1-\frac{2}{k}\right)n(D)\\
\frac{2k^2+3k-4}{k}m(D) &\ge \frac{2k^3+3k^2-3k-2}{k}\,n(D)\\
m(D) &\geq \frac{2k^3+3k^2-3k-2}{2k^2+3k-4}\,n(D) = \left(k + \frac{k-2}{2k^2+3k-4}\right) n(D).
\end{align*}
\end{proof}

\begin{corollary}\label{cor:finite-dicrit}
For any surface $\Sigma$ and any $k\geq 6$, there is a finite number of $(k+1)$-dicritical digraphs embeddable on $\Sigma$.
\end{corollary}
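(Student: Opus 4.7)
The plan is to combine the density lower bound of Theorem~\ref{thm:ad-dicrit} with Euler's Formula for digraphs, following exactly the strategy of Proposition~\ref{prop_8crit} but with the sharper constant now available. Fix $k \geq 6$, let $\Sigma$ be a surface of Euler characteristic $c$, and suppose $D$ is a $(k+1)$-dicritical digraph embeddable on $\Sigma$.

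First I would dispose of the single case $D = \bid{K}_{k+1}$, which contributes exactly one digraph on $k+1$ vertices, and henceforth assume $D \neq \bid{K}_{k+1}$. Because the underlying simple graph of $D$ embeds on $\Sigma$ and at most two arcs of $D$ join any pair of vertices, Theorem~\ref{thm:euler-gen} yields
\[ m(D) \leq 6 n(D) - 6c. \]
On the other hand, Theorem~\ref{thm:ad-dicrit} supplies
\[ m(D) \geq \left(k + \frac{k-2}{2k^2+3k-4}\right) n(D). \]
Chaining the two inequalities gives
\[ \left(k-6 + \frac{k-2}{2k^2+3k-4}\right) n(D) \leq -6c. \]

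For $k \geq 6$, the coefficient of $n(D)$ on the left is strictly positive (at least $\tfrac{4}{86}$), so $n(D)$ is bounded above by a constant depending only on $c$ and $k$; when $c > 0$ the right-hand side is negative and no such $D$ can exist. Since only finitely many digraphs fit on a bounded vertex set, the claim follows.

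There is no real obstacle here: the argument is mechanical once Theorem~\ref{thm:ad-dicrit} is in hand. The one point worth highlighting is that $k \geq 6$ is precisely the threshold at which $k + \tfrac{k-2}{2k^2+3k-4}$ exceeds $6$, and hence the threshold at which the lower bound from Theorem~\ref{thm:ad-dicrit} beats the density bound $\tfrac{1}{2}\Ad(D) \leq 6 - \tfrac{6c}{n(D)}$ coming from Euler's Formula; for $k \leq 5$ the argument collapses, matching the fact that $\Sigma$-$k$-Dicolourability for $k \in \{4,5\}$ is left open in the paper.
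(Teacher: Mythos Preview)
Your proof is correct and follows essentially the same approach as the paper: combine the Euler-formula upper bound $m(D)\leq 6n(D)-6c$ with the density lower bound from Theorem~\ref{thm:ad-dicrit}, and observe that for $k\geq 6$ the resulting coefficient $k-6+\tfrac{k-2}{2k^2+3k-4}$ is strictly positive so $n(D)$ is bounded. Your added remarks on the threshold and the $c>0$ case are sound but not needed for the corollary.
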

\begin{proof}
Let $\Sigma$ be a surface, and $k\geq 6$. 
%Set $\epsilon_k= \frac{2k-6}{2k^2- k-6}$ and $c=c(\Sigma)$.
Set $\epsilon_{k+1}= \frac{k-2}{2k^2+3k-4}$ and $c=c(\Sigma)$. 
Let $D$ be a $(k+1)$-dicritical digraph embeddable on $\Sigma$ distinct from $\overleftrightarrow{K}_{k+1}$.
Since there are at most two arcs between any vertices, by Theorem~\ref{thm:euler-gen}, we have $m(D)\leq 6n(D) - 6c$. Moreover,  by
 Theorem~\ref{thm:ad-dicrit}, $m(D)\geq \left(k + \epsilon_{k+1}\right) n(D)$.
 Hence $  (k+\epsilon_{k+1})n(D) \leq 6n(D) - 6c$ so 
 $$n(D)\leq \frac{-6c}{k-6+\epsilon_{k+1}}.$$   
\end{proof}

%---------------------------------------------------------

\subsection{Dicolouring planar digraphs and planar oriented graphs}

%-------------------------------------------------

\begin{theorem}\label{thm:planar-2-NP}
% {\sc $\mathbb{S}_0$-2-Dicolourability} is NP-complete:
Deciding whether a planar digraph is $2$-dicolourable is NP-complete.
\end{theorem}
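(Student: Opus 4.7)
The plan is to show NP-hardness by a polynomial-time reduction from \textsc{Planar 3-SAT}, which is NP-complete by a classical result of Lichtenstein. Membership in NP is immediate, since given a candidate bipartition of $V(D)$ one verifies in linear time, via topological sorting, that each part induces an acyclic subdigraph.

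The reduction will rely on two elementary planar gadgets. A \emph{digon} between $u$ and $v$ (a pair of oppositely directed arcs) is itself a directed $2$-cycle, and hence forces $c(u)\neq c(v)$ in any $2$-dicolouring. An \emph{equality gadget}, consisting of a fresh vertex $w$ joined by digons to both $u$ and $v$, then forces $c(u)=c(v)$. Both gadgets are planar, of constant size, and can be inserted along an arc of an existing planar embedding without creating crossings.

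From a \textsc{Planar 3-SAT} instance $\phi$, I will build a planar digraph $D_\phi$ as follows. Introduce a global reference vertex $F$, whose colour will play the role of FALSE. For each variable $x$, create a pair $v_x,v_{\bar x}$ joined by a digon, so that exactly one of them shares the colour of $F$. For each clause $C=(l_1\vee l_2\vee l_3)$, create literal copies $u_1^C, u_2^C, u_3^C$ tied by equality gadgets to the vertices representing $l_1,l_2,l_3$, a local reference $F_C$ tied to $F$ by equality gadgets, and a directed $4$-cycle $F_C\to u_1^C\to u_2^C\to u_3^C\to F_C$. Since any two distinct clause gadgets only communicate through chains of digons (every such chain alternates colours), the only potentially monochromatic directed cycles in $D_\phi$ are these clause $4$-cycles, and such a cycle is monochromatic precisely when all three literals of $C$ have the same colour as $F$, i.e.\ when $C$ is falsified. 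This will give the equivalence between $2$-dicolourings of $D_\phi$ and satisfying assignments of $\phi$.

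The main obstacle is to keep the overall construction planar: a single variable $x$ may appear positively in some clauses and negatively in others, with the incident clauses occurring around $x$ in a prescribed cyclic order dictated by the planar embedding of the incidence graph of $\phi$. The standard workaround is to blow up each variable gadget into a long \emph{variable cycle} of alternating positive and negative copies, where consecutive copies of opposite polarity are linked by digons and consecutive copies of the same polarity are linked by equality gadgets, so that every incident clause can be hooked into the copy lying in its incident face of the embedding. Similarly, the global vertex $F$ is distributed along a planar spanning tree of equality gadgets, giving each clause a nearby local copy $F_C$. Verifying that these standard planar layouts preserve polynomial size and the equivalence above will complete the reduction.
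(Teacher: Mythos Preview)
Your reduction is correct and follows the same overall strategy as the paper: reduce from \textsc{Planar 3-SAT}, use digons as $\neq$-gadgets, and encode each clause by a directed cycle that becomes monochromatic exactly when all its literals receive the colour of a fixed reference vertex. Your key observation---that any directed cycle containing an arc of a digon automatically has two differently-coloured vertices, so the only cycles one must worry about are those built from the non-digon arcs (namely the clause cycles)---is exactly the mechanism that makes both proofs work.

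The implementations diverge in the gadgetry. You blow up each variable into a cycle of alternating literal copies and distribute a global reference $F$ along a ``planar spanning tree of equality gadgets''; the paper instead keeps each variable as a single vertex (handling polarity locally with one extra vertex per positive occurrence), uses a directed $6$-cycle $(x_C,u_C,y_C,v_C,z_C,w_C)$ for each clause, and achieves global synchronisation by placing a vertex $t_F$ in every face of the incidence graph and linking each $u_C,v_C,w_C$ both to an interior vertex $t_C$ and to the adjacent $t_{F_i}$. The face-vertex device makes planarity immediate and makes the synchronising subgraph connected for free, whereas your $F$-tree needs an extra (routine but not entirely trivial) argument that it can be routed without crossings once the variable and clause gadgets are already in place. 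Both approaches yield the same theorem; the paper's is slightly slicker on the planarity side, while yours relies on more familiar \textsc{Planar 3-SAT} machinery.
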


\begin{proof}
We shall give a reduction from {\sc Planar 3-SAT} which consists in deciding whether a 3-SAT formula whose incidence graph\footnote{The \textit{incidence graph} of a 3-SAT formula is the bipartite graph with a vertex for each clause and each variable, and a variable is adjacent to a clause if it belongs to it.} is planar
is satisfiable. This problem was shown to be NP-complete by Lichtenstein~\cite{Lic82}.

A {\bf ${\bf \neq}$-gadget} between $u$ and $v$ is a digon between $u$ and $v$.
Trivially, a $\neq$-gadget is $2$-dicolourable and its extremities have distinct colours in any of its $2$-dicolourings.

%A {\bf $=$-gadget} between $u$ and $w$ is the union of two digons between $u$ and $v$ and $v$ and $w$ (with $v$ a new vertex). Trivially, a $=$-gadget is $2$-dicolourable and $u$ and $w$ have the same colour in any of its $2$-dicolourings. 

Consider now an instance $\Phi$ of \textsc{PLANAR 3-SAT} 
%(\CR{Pourquoi any? Petite incoherence entre textbf et textsc plus haut, je sais pas quel est le mieux?}) 
and let $H$ be its incidence graph embedded in the plane.
Let us construct the planar oriented graph $\vec{G}$ from $H$ as follows.
First, we add a vertex $t_F$ in each face of $H$.
Now for every clause $C=\ell_x \vee \ell_y \vee \ell_z$, we replace the vertex $C$ and the three incident edges by a clause gadget as follows.
We replaced the vertex $C$ by a directed $6$-cycle $(x_C, u_C, y_C, v_C, z_C, w_C, x_C)$ inside which we add a vertex $t_C$ which is connected to $u_C$, $v_C$ and $w_C$ via $\neq$-gadgets. Let $F_1$ (resp. $F_2$, $F_3$) be the face containing $(x,C,y)$ (resp $(y,C,z)$, $(z,C,x)$) in $H$.
We add a $\neq$-gadget between $t_{F_1}$ and $u_C$, between $t_{F_2}$ and $v_C$, and between  $t_{F_3}$ and $w_C$.
Finally, for any $a\in \{x,y,z\}$, if $\ell_a$ is the negated literal $\bar{a}$, then add a $\neq$-gadget between the variable vertex $a$ and $a_C$, and if
 $\ell_a$ is the non-negated literal $a$, the  add a new vertex $\bar{a}_C$ and two $\neq$-gadgets between this vertex and $a$ and $a_C$.
See Figure~\ref{fig_gadget_b}.

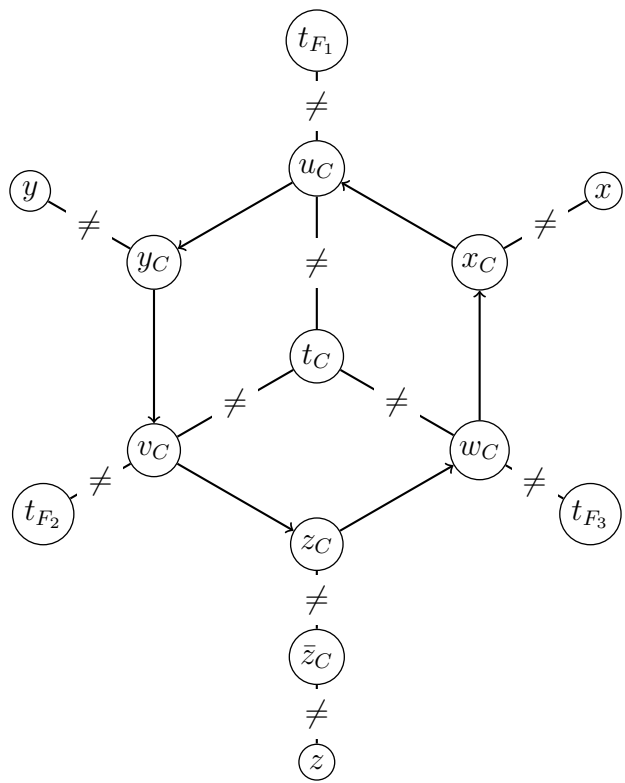
\begin{figure}[hbtp]
\centering
\begin{tikzpicture}[scale=2, rotate=30]
  \GraphInit[vstyle=Hasse]
  \SetVertexNoLabel
  \SetUpEdge[style={->}]
  \SetGraphUnit{0.75}
  \Vertex[a=0,   d=2.2, L=$x$, NoLabel=false]{x}
  \Vertex[a=120, d=2.2, L=$y$, NoLabel=false]{y}
  \Vertex[a=240, d=2, L=$\bar{z}_C$, NoLabel=false]{zb}
  \Vertex[a=240, d=2.7, L=$z$, NoLabel=false]{z}

  \Vertex[a=0,   d=1.25, L=$x_C$, NoLabel=false]{xx}
  \Vertex[a=120, d=1.25, L=$y_C$, NoLabel=false]{yy}
  \Vertex[a=240, d=1.25, L=$z_C$, NoLabel=false]{zz}
  
  \Vertex[a=60,  d=1.25, L=$u_C$, NoLabel=false]{g1}
  \Vertex[a=180, d=1.25, L=$v_C$, NoLabel=false]{g2}
  \Vertex[a=300, d=1.25, L=$w_C$, NoLabel=false]{g3}
  
  \Vertex[a=60,  d=2.1, L=$t_{F_1}$, NoLabel=false]{ag1}
  \Vertex[a=180, d=2.1,  L=$t_{F_2}$, NoLabel=false]{ag2}
 \Vertex[a=300, d=2.1,  L=$t_{F_3}$, NoLabel=false]{ag3}
  
  \Vertex[a=0, d=0, L=$t_C$, NoLabel=false]{g0}
  
%  \AddVertexColor{green}{g0}
%  \AddVertexColor{red}{g1,g2,g3}
  
  \Edge[label=$\neq$, style={-}](x)(xx);
  \Edge[label=$\neq$,  style={-}](y)(yy);
  \Edge[label=$\neq$,  style={-}](zb)(zz);
    \Edge[label=$\neq$,  style={-}](zb)(z);

  \Edges (xx,g1,yy,g2,zz,g3,xx)
  
  \Edge[label=$\neq$,  style={-}](g1)(g0);
  \Edge[label=$\neq$,  style={-}](g2)(g0);
  \Edge[label=$\neq$,  style={-}](g3)(g0);

 \Edge[label=$\neq$,  style={-}](g1)(ag1);
 \Edge[label=$\neq$,  style={-}](g2)(ag2);
 \Edge[label=$\neq$,  style={-}](g3)(ag3);

\end{tikzpicture}
\caption{\label{fig_gadget_b} Clause gadget associated to the clause $\neg x \vee \neg y \vee z$. }
\end{figure}

\medskip

Let us now show that $\Phi$ is satisfiable if and only if $\vec{G}$ is $2$-dicolourable.

\smallskip
Assume first that $\Phi$ is satisfiable.
Colour each variable with $1$ if it is true and $2$ if it is false. 
For each face $F$ of $H$, colour the vertex $t_F$ with $1$, and for each clause $C$ colour the vertex $t_C$ with $1$ and $u_C,v_C,w_C$ with $2$. 
It remains to colour $x_C$, $y_C$ and $z_C$ for each clause $C$. Each of these vertices is incident with a unique $\neq$-gadget, which forces to colour the vertex with the colour opposite to the one of the other end of the gadget.
%Extend now this colouring to all $\neq$-gadgets using a $2$-dicolouring. This is possible since two vertices joined by a $\neq$-gadget have distinct colours.
Let us show that there is no monochromatic directed cycle.
Assume for a contradiction that there is such a cycle. Since two vertices linked by a $\neq$-gadget have distinct colours, such a cycle can only be 
one of the cycles $(x_C, u_C, y_C, v_C, z_C, w_C, x_C)$ for some clause $C=\ell_x\vee \ell_y\vee \ell_z$. But, as $\Phi$ is satisfied, at least one of the literals $\ell_x, \ell_y, \ell_z$ is true, and thus, by construction, at least one of the vertices $x_C, y_C, z_C$ is coloured $1$. But the vertices $u_C,v_C,w_C$ are coloured $2$, so the cycle is not monochromatic. Hence we have a $2$-dicolouring of $\vec{G}$.
\smallskip 

Assume now that $\vec{G}$ admits a $2$-dicolouring. Observe that the digraph induced by the $t_F$ for $F$ face~of~$H$, and all the $t_C, u_C, v_C, w_C$ for $C$ clause, and all the $\neq$-gadgets between them, is connected. Without loss of generality, we may assume  that all the $t_F$ and $t_C$ are coloured $1$ and all the $t_C, u_C, v_C, w_C$ are coloured $2$.
Let $\phi$ be the truth assignment defined by $\phi(x)=true$ if and only if $x$ is coloured $1$ in $\vec{G}$.
Consider a clause $C=\ell_x\vee \ell_y\vee \ell_z$. The directed cycle  $(x_C, u_C, y_C, v_C, z_C, w_C, x_C)$ is not monochromatic, so at least one vertex $x_C$, $y_C$ and $z_C$ is coloured $1$. By construction, this means that one of the literals $\ell_x, \ell_y, \ell_z$ is true.
Hence $\Phi$ is satisfied.
\end{proof}

Since every graph embeddable on the sphere can also be embedded in any other surface, Theorem~\ref{thm:planar-2-NP} implies that
{\sc $\Sigma$-2-Dicolourability} is NP-complete for any surface $\Sigma$.
It is then natural to ask about the complexity of the problem restricted to oriented graphs.

Recall that Conjecture~\ref{conj_neumann} states that every planar oriented graph is $2$-dicolourable.
If true, it implies that {\sc $\mathbb{S}_0$-Oriented-2-Dicolourability} can be trivially solved in polynomial time because the answer is always positive.
The following result shows that, conversely, if it happens to be false, {\sc $\mathbb{S}_0$-Oriented-2-Dicolourability} is NP-complete.

\begin{theorem}\label{thm:NP}
If Conjecture~\ref{conj_neumann} does not hold, then 
deciding whether a planar oriented graph is $2$-dicolourable is NP-complete.
\end{theorem}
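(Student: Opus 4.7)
Assume Conjecture~\ref{conj_neumann} fails. Then some planar oriented graph $K$ has $\vec{\chi}(K) \geq 3$; picking any $3$-dicritical subdigraph of $K$ yields a planar oriented $3$-dicritical graph $H$. The plan is to extract from $H$ a planar oriented ``$\neq$-gadget'' and then adapt the reduction of Theorem~\ref{thm:planar-2-NP} by replacing each digon with a copy of this gadget. The first technical step is to locate a vertex $v$ of $H$ with $d^+(v) = 2$ or $d^-(v) = 2$: by Proposition~\ref{prop_crit_deg_digraphe}, every vertex has in- and out-degree at least $2$; if in addition every vertex had $d^+, d^- \geq 3$, then $m(H) \geq 3\, n(H)$, contradicting the planar bound $m(H) \leq 3\, n(H) - 6$ from Theorem~\ref{thm:euler-gen}. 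Up to reversing all arcs in $H$ (which preserves dicolourability), I may assume $d^+(v) = 2$ and write $N^+(v) = \{a_1, a_2\}$.

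Next I would verify that $G_{\neq} := H - v$, with designated terminals $a_1, a_2$, is a planar oriented $\neq$-gadget. It is $2$-dicolourable by the $3$-dicriticality of $H$. Suppose for contradiction that some $2$-dicolouring $\phi$ of $G_{\neq}$ satisfies $\phi(a_1) = \phi(a_2) = 1$. Extending by $\phi(v) = 2$ would give a $2$-dicolouring of $H$: any monochromatic directed cycle in $H$ would either avoid $v$ (and hence lie in $G_{\neq}$, impossible) or pass through $v$ together with some out-neighbour of $v$ coloured $2$ (impossible since $\{a_1, a_2\}$ are both coloured $1$). This contradicts $\vec{\chi}(H) \geq 3$. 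The symmetric case $\phi(a_1) = \phi(a_2) = 2$ is ruled out identically, so $\phi(a_1) \neq \phi(a_2)$ in every $2$-dicolouring of $G_{\neq}$; both assignments $(\phi(a_1), \phi(a_2)) \in \{(1,2), (2,1)\}$ are realisable, since swapping the two colours in any valid dicolouring yields another. For planarity, deleting $v$ from a planar embedding of $H$ merges the faces incident to $v$ into a single face whose boundary contains $N(v)$; re-embedding with this face as the outer face puts $a_1, a_2$ on the outer boundary, so $G_{\neq}$ can be inserted planarly.

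Finally I would reduce from \textsc{Planar-$2$-Dicolourability} of planar digraphs, NP-complete by Theorem~\ref{thm:planar-2-NP}. Given a planar digraph $D$, construct a planar oriented graph $D'$ by replacing each digon $\{uv, vu\}$ of $D$ with a fresh copy of $G_{\neq}$, identifying $a_1 = u$ and $a_2 = v$ and drawing the gadget in a narrow strip along one of the two arcs of the former digon. The resulting $D'$ is planar oriented (no new digons arise since the only vertices shared between a gadget and the rest are its terminals) and has polynomial size, as each gadget has constant size depending only on $H$. Correctness follows from the $\neq$-property of $G_{\neq}$: a $2$-dicolouring of $D$ has $\phi(u) \neq \phi(v)$ at every digon and extends to each gadget interior by the realisability of both terminal assignments; conversely, a $2$-dicolouring of $D'$ restricts to $V(D)$ with $\phi(u) \neq \phi(v)$ at every former digon, which makes reinsertion of the digon arcs safe and forces any monochromatic directed cycle in $D$ either to use a digon arc (impossible by $\phi(u) \neq \phi(v)$) or to appear in $D'$ as well (impossible). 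Membership in NP is immediate by guessing a colouring. I expect the main obstacle to be the gadget verification in the middle step, which crucially relies on the low-degree vertex from the first step: without $|N^+(v)| = 2$ one cannot force one of the out-neighbours to receive each of the two colours, so the whole approach hinges on combining Euler's formula with dicriticality to produce such a $v$.
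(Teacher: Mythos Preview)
Your proof is correct and takes a genuinely different route from the paper's. The paper deletes an \emph{arc} $uv$ from a planar $3$-dicritical oriented graph $\vec{G}$ to obtain an ``$=$-gadget'' (in every $2$-dicolouring of $\vec{G}\setminus uv$ the endpoints receive the same colour), then assembles a $\neq$-gadget from two copies of this $=$-gadget glued onto a directed triangle, and finally re-runs the Planar 3-SAT reduction of Theorem~\ref{thm:planar-2-NP} with this oriented $\neq$-gadget in place of the digon. You instead delete a \emph{vertex} of out-degree~$2$ --- whose existence you secure by combining Proposition~\ref{prop_crit_deg_digraphe} with Euler's formula --- to obtain a $\neq$-gadget in one stroke, and then reduce directly from the \emph{conclusion} of Theorem~\ref{thm:planar-2-NP} by substituting a fresh copy of the gadget for each digon of an arbitrary planar digraph. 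The paper's gadget is cheaper to locate (any arc works, no degree argument needed) but costs an extra assembly step; yours is direct but leans on the Euler bound to exhibit the low-out-degree vertex. Your reduction is also more modular, treating Theorem~\ref{thm:planar-2-NP} as a black box rather than replaying the 3-SAT encoding.
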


\begin{proof}
The proof is similar to the one of Theorem~\ref{thm:planar-2-NP}.
The only difference is in the $\neq$-gadget, which should now be constructed without any digons.

Suppose that Conjecture~\ref{conj_neumann} does not hold. Then there is a planar $3$-dicritical oriented graph  $\vec{G}$.
Let $uv$ be an arc of $\vec{G}$. By definition of $3$-dicriticality, $\vec{G}\setminus uv$ is $2$-dicolourable.
Moreover, in any $2$-dicolouring $\phi$ of  $\vec{G}\setminus uv$, $u$ and $v$ are coloured the same for otherwise $\phi$ would be a $2$-dicolouring of $\vec{G}$.
We say that $\vec{G}\setminus uv$ is a {\bf =-gadget} between $u$ and $v$.

Let us now explain how to construct a {\bf $\neq$-gadget} between two vertices $u$ and $w$.
We start with four vertices $u$, $v_1$, $v_2$, $w$. We add the arcs $v_1v_2$, $v_2w$, $wv_1$ and two $=$-gadgets between $u$ and $v_1$ and between $u$ and $v_2$. See Figure~\ref{fig_gadget_neq}.
One easily sees that a $\neq$-gadget is $2$-dicolourable and that $u$ and $w$ have distinct colours in any of its $2$-dicolourings. 
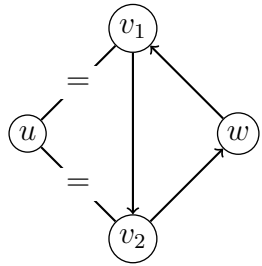
\begin{figure}[hbtp]
\centering
\begin{tikzpicture}[scale=1.4, rotate=0]
  \GraphInit[vstyle=Hasse]
  \SetVertexNoLabel
  \SetUpEdge[style={->}]
  \SetGraphUnit{0.5}
  
  \Vertex[a=0,   d=1, L=$w$, NoLabel=false]{w1}
  \Vertex[a=90,  d=1, L=$v_1$, NoLabel=false]{v1}
%  \Vertex[a=180, d=1, L=$w_1$, NoLabel=false]{w2}
  \Vertex[a=270, d=1, L=$v_2$, NoLabel=false]{v2}
  
  \Vertex[a=180, d=1, L=$u$,   NoLabel=false]{v}
  
  \Edges (w1,v1,v2,w1)
  
 % \Edge[label=$\eq$, style={-}](w1)(w2);
  \Edge[label=$\eq$, style={-}](v)(v1);
  \Edge[label=$\eq$, style={-}](v)(v2);
\end{tikzpicture}
\caption{\label{fig_gadget_neq} A $\neq$-gadget between $u$ and $w$.}
\end{figure}
\end{proof}

\section{Concluding remarks}
We have determined the dichromatic number of the projective plane $\mathbb N_1$, the Klein bottle $\mathbb N_2$, the torus $\mathbb S_1$, Dyck's surface $\mathbb N_3$, the $5$-torus $\mathbb S_5$, and the $10$-cross surface $\mathbb{N}_{10}$. For the surfaces in between, the dichromatic number is either $3$ or $4$. We verified that all orientations of triangulations of the double torus $\mathbb S_2$ or $\mathbb{N}_{4}$ on at most $14$ vertices have dichromatic number at most $3$ by computer. Note that this does not imply that all $14$-vertex graphs embeddable on these surfaces have dichromatic number at most $3$, see e.g.~\cite{DP19}. By Theorem~\ref{thm:4crit}, it would suffice to check all digraphs on up to $22$ vertices.

\begin{problem}
 Determine the maximum dichromatic number of an oriented graph embeddable on the double torus $\mathbb S_2$ or in the $4$-cross surface $\mathbb{N}_{4}$. 
\end{problem}

A common generalization of colouring is list colouring. Similarly, dicolouring generalizes to  list dicolouring.
A digraph $D$ is {\bf  $k$-list-dicolourable} if it is $L$-dicolourable for every list assignment $L$ such that $|L(v)|\geq k$ for all $v\in V(D)$.
The {\bf  list dichromatic number} of a digraph $D$ is the least integer $k$ such that $D$ is $k$-list-dicolourable.
Note that by degeneracy, every planar oriented graph is  $3$-list-dicolourable. It has been asked whether the list version of Conjecture~\ref{conj_neumann} holds, see~\cite{BeArKh2018}. Combining degeneracy and Theorem~\ref{thm_gallai_list}, one can show that oriented graphs on the projective plane, the Klein bottle and the torus are $3$-list-dicolourable. However, our proofs of Theorems~\ref{thm:N3} and~\ref{thm:S5} do not generalize.

\begin{problem}
 Determine the maximum list dichromatic number of an oriented graph embeddable in Dyck's surface $\mathbb N_3$.
\end{problem}

We further believe that the asymptotic behaviour of the list dichromatic number of surfaces is an interesting topic of future research.
\medskip

Recall that Conjecture~\ref{conj_neumann} holds for digraphs of digirth $4$. Indeed, the proof of~\cite{LM17} gives a decomposition of planar triangulations into two induced chordal graphs. Any such decomposition of a graph is a $2$-dicolouring for all its orientations of digirth $4$. Unfortunately,   there are graphs embeddable in the projective plane that cannot be decomposed into two induced chordal graphs, see the left of Figure~\ref{fig:N13chordal}. On the other hand, we have verified that all orientations of triangulations in $\mathbb{N}_1$ with digirth $4$ and at most $17$ vertices have dichromatic number $2$. Indeed, the smallest $3$-dichromatic oriented graph of digirth $4$ that we know of contains a $K_{5,8}$, see the right of Figure~\ref{fig:N13chordal}. Hence, it is not embeddable in $\mathbb{N}_{10}$ nor $\mathbb{S}_{5}$, see~\cite{B78}. 

\begin{problem}
Determine the maximum dichromatic number of
 an oriented graph with digirth $4$ embeddable on the projective plane.
\end{problem}

\begin{figure}[htp]
    \centering
    \includegraphics[width=\textwidth]{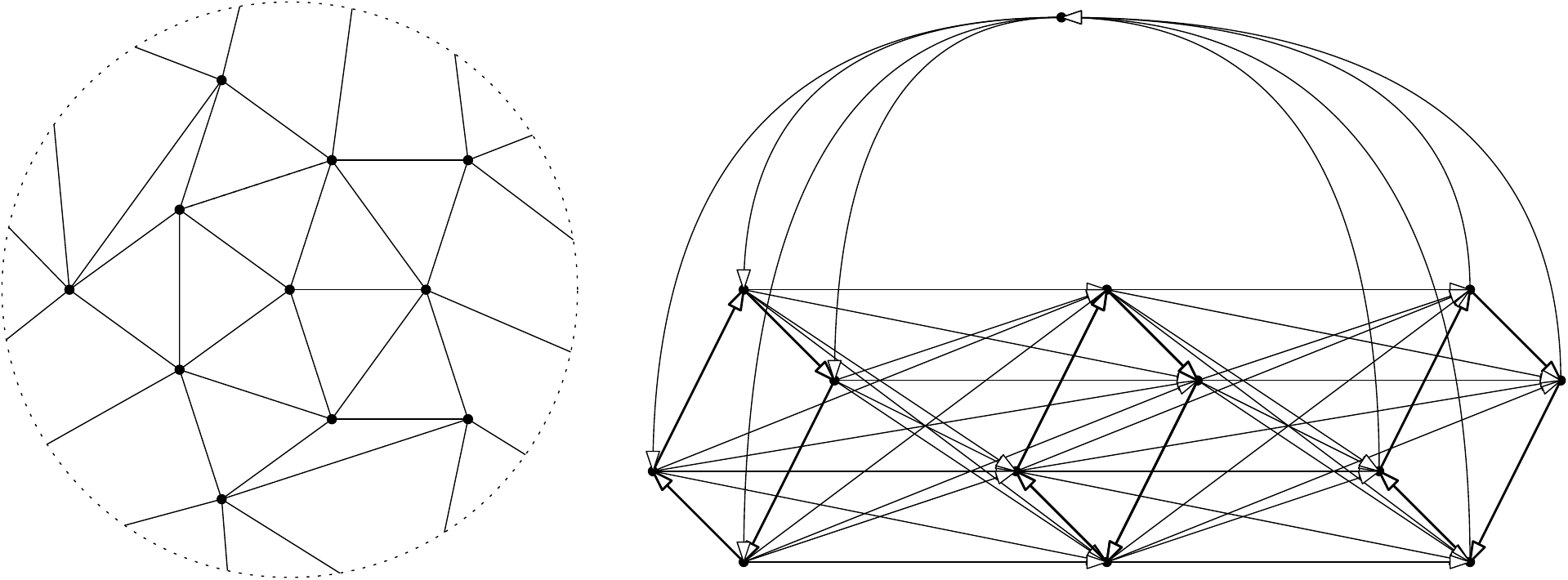}
    \caption{Left: A triangulation of $\mathbb{N}_1$ that cannot be decomposed into two chordal graphs. Right: A $3$-dichromatic digraph of digirth $4$.}\label{fig:N13chordal}
\end{figure}

On the other hand it follows from Theorem~\ref{thm:4crit}, that for a digraph $D$ on $\Sigma$ of Euler characteristic $c$, one has that digirth more than $4 - 9c$ implies $ \vec{\chi(D)} \leq 3$. We believe that the interplay of digirth, genus, and dichromatic number deserves further investigation.

\medskip

We prove  that  the number of $k$-dicritical digraphs embeddable on a surface is finite for any $k\geq 7$.
Thus, for $k\geq 6$ \textsc{$\Sigma$-$k$-Dicolourability} is polynomial time solvable. On the other hand we show that \textsc{$\Sigma$-$2$-Dicolourability} is NP-complete. Since $\chi(G) = \dic(\bid{G})$, the NP-completeness of the $3$-colourability of graph embedded in any fixed surface implies that
\textsc{$\Sigma$-$3$-Dicolourability} is NP-complete for all surface $\Sigma$. The following remains.

\begin{problem}\label{prob:45}
Let $\Sigma$ be a surface different from the sphere and $k\in \{4,5\}$. 
What is the complexity of \textsc{$\Sigma$-$k$-Dicolourability}~? Are there a infinitely many  $6$-dicritical digraphs embeddable on $\Sigma$ ?
\end{problem}

We further show that if Conjecture~\ref{conj_neumann} is false, then 
\textsc{$\mathbb{S}_0$-Oriented-$2$-Dicolourability}
is NP-complete. However, the method of this proof does not extend to prove the NP-completeness of \textsc{$\Sigma$-Oriented-$2$-Dicolourability} for a surface $\Sigma$ other than the sphere. Indeed, while assembling together planar gadgets results in a planar graph, assembling gadgets embeddable on a given surface does not necessarily results in a graph embeddable on this surface.

\begin{problem}
Let $\Sigma$ be a surface.
What is the complexity of \textsc{$\Sigma$-Oriented-$2$-Dicolourability}~?
\end{problem}

\section*{Acknowledgements}
%%%%%%%%%%%%%%%%
We thank St\'ephane Bessy, Louis Esperet, Ararat Harutyunyan, Jocelyn Thiebaut, and Petru Valicov for fruitful discussions. We particularly thank Raphael Steiner for pointing us to the argument for the upper bound
in Theorem~\ref{thm:bnd-gen}. This research was supported by projects Digraphs: ANR-19-CE48-0013-01, GATO: ANR-16-CE40-0009-01, ALCOIN: PID2019-104844GB-I00, RYC-2017-22701 and ALGORIDAM: ANR-19-CE48-0016

\bibliographystyle{abbrv}
\bibliography{biblio}

\end{document}